\numberwithin{equation}{section}
\theoremstyle{plain}
\newtheorem{thm}{Theorem}[section]
\newtheorem{lem}[thm]{Lemma}
\newtheorem{prop}[thm]{Proposition}
\newtheorem{cor}[thm]{Corollary}
\newcommand{\thmref}[1]{Theorem~\ref{#1}}
\newcommand{\lemref}[1]{Lemma~\ref{#1}}
\newcommand{\propref}[1]{Proposition~\ref{#1}}
\newcommand{\corref}[1]{Corollary~\ref{#1}}
\theoremstyle{definition}
\newtheorem{rmk}[thm]{Remark}
\newtheorem{conjecture}[thm]{Conjecture}
\newcommand{\rmkref}[1]{Remark~\ref{#1}}
\newcommand{\conjectureRef}[1]{Conjecture~\ref{#1}}
\newcommand{\psmb}{\left( \begin{smallmatrix}}
\newcommand{\psme}{ \end{smallmatrix} \right)}
\newcommand{\smat}[4]{\left( \begin{smallmatrix} #1 & #2 \\ #3 & #4 \\ \end{smallmatrix} \right)}
\DeclareMathOperator{\im}{Im}
\DeclareMathOperator{\tr}{tr}
\DeclareMathOperator{\rk}{rank}
\DeclareMathOperator{\Adj}{Adj}
\renewcommand*{\mod}{\operatorname{mod}}
\newcommand*{\halfSpace}{\mathcal H}
\newcommand{\z}{\mathbb{Z}}
\newcommand{\hn}{\mc H_n}
\newcommand{\lan}{\langle}
\newcommand{\ran}{\rangle}
\newcommand{\midmid}{\;\middle|\;}
\newcommand{\q}{\quad}
\newcommand{\fn}{\mc F_n}
\newcommand{\tg}{\tilde{\gamma}}
\newcommand{\tgg}{\tilde{\gamma_1}}
\newcommand{\prodd}{\prod \nolimits}
\newcommand{\summ}{\sum \nolimits}
\newcommand*{\Q}{\mathbb{Q}}
\newcommand*{\R}{\mathbb{R}}
\newcommand{\N}{\mbb{N}}
\newcommand*{\complex}{\mathbb{C}}
\newcommand*{\fundamentalDomain}{\mathcal{F}_n}
\newcommand{\mbb}{\mathbb}
\newcommand{\mc}{\mathcal}
\newcommand{\mrm}{\mathrm}
\newcommand{\tra}{\mrm{tr}}
\newcommand{\dia}{\mrm{diag}}
\newcommand{\n}{\nonumber}
\newcommand{\mf}{\mathbf}
\newcommand{\bnk}{\mc B_k^n}
\newcommand{\ank}[1][n]{a(#1, k)}
\newcommand*{\basis}[1][n]{\mc B_k^{#1}}
\newcommand*{\bkzz}[1][Z]{\mbb B_k(#1, #1)}
\newcommand{\snk}{S^n_k}
\newcommand{\dn}{\nonumber}
\newcommand{\cy}{\mc C_Y}
\newcommand{\ty}{\widetilde{Y}}
\newcommand{\tyd}{\ty_D}
\newcommand{\ymin}{\lambda_1}
\newcommand{\sn}{\sigma^{(n)}}
\newcommand{\ut}{\underline{t}}
\newcommand{\sumn}{\sum\nolimits}
\newcommand*{\GL}[2][n]{\operatorname{GL}(#1, #2)}
\newcommand*{\SL}[2][n]{\operatorname{SL}(#1, #2)}
\newcommand*{\Sp}[2][n]{\operatorname{Sp}(#1, #2)}
\newcommand*{\Sym}[2][n]{\operatorname{Sym}(#1, #2)}
\newcommand*{\SO}[2][n]{\operatorname{SO}(#1, #2)}
\newcommand*{\m}[2][n]{\operatorname{M}(#1, #2)}
\newcommand{\tp}[1]{#1^t}
\newcommand*{\expn}[1]{\exp\left(#1\right)} 
\newcommand*{\sptwo}{\Sp[2]{\R}}
\newcommand*{\mtwo}{\m[2]{\z}}
\newcommand*{\QEDB}{\hfill\ensuremath{\square}}
\newcommand*{\norm}[1]{\left\lVert#1\right\rVert}
\newcommand*{\abs}[1]{\left\lvert#1\right\rvert}
\newcommand{\lv}{\left|}
\newcommand{\rv}{\right|}
\NewDocumentCommand{\oldnorm}{sO{}m}{%
  {\IfBooleanTF{#1}
    {\oldnormaux{\lv}{\rv}{#3}}
    {\oldnormaux{#2|}{#2|}{#3}}}
}
\newcommand{\oldnormaux}[3]{\mathpalette\oldnormaux@i{{#1}{#2}{#3}}}
\newcommand{\oldnormaux@i}[2]{\oldnormaux@ii#1#2}
\newcommand{\oldnormaux@ii}[4]{%
  \sbox\z@{$\m@th#1#2#4#3$}%
  \sbox\tw@{$\m@th\|$}%
  \mathopen{\hbox to\wd\tw@{\hss\vrule height \ht\z@ depth \dp\z@ width .3\wd\tw@\hss}}%
  #4
  \mathclose{\hbox to\wd\tw@{\hss\vrule height \ht\z@ depth \dp\z@ width .3\wd\tw@\hss}}%
} 
\newcommand{\on}{\oldnorm}
\author{Soumya Das}
\address{Department of Mathematics\\ 
Indian Institute of Science\\ 
Bengaluru -- 560012, India.}
\email{soumya@iisc.ac.in}
\author{Hariram Krishna}
\address{Department of Mathematics\\ 
Indian Institute of Science\\ 
Bengaluru -- 560012, India.}
\email{hariramk@iisc.ac.in} 
\date{}
\subjclass[2020]{Primary 11F46, Secondary 11F30} 
\keywords{Sup-norm, Siegel modular form, Bergman kernel, amplification}
\begin{document}
\title[Sup-norm of Siegel cusp forms]{Bounds for the Bergman kernel and the sup-norm of holomorphic Siegel cusp forms}

\begin{abstract}
We prove `polynomial in $k$' bounds on the size of the Bergman kernel for the space of holomorphic Siegel cusp forms of degree $n$ and weight $k$. When $n=1,2$ our bounds agree with the conjectural bounds on the aforementioned size, while the lower bounds match for all $n \ge 1$. 
For an $L^2$-normalised Siegel cusp form $F$ of degree $2$, our bound for its sup-norm is $O_\epsilon (k^{9/4+\epsilon})$.
Further, we show that in any compact set $\Omega$ (which does not depend on $k$) contained in the Siegel fundamental domain of $\Sp[2]\z$ on the Siegel upper half space,
the sup-norm of $F$ is $O_\Omega(k^{3/2 - \eta})$ for some $\eta>0$, going beyond the `generic' bound in this setting.

\end{abstract}

\maketitle

\section{Introduction}
This article is concerned with the problem of estimation of the sup-norm of an $L^2$-normalised holomorphic Siegel cusp form of degree $n$ (which is fixed but arbitrary)
in terms of its scalar weight $k$, which is assumed to be `large' as compared to $n$.
More precisely, let $F$ be a Siegel cusp form of degree $n$ and weight $k$ on the Siegel modular group $\Gamma_n:=\Sp{\z}$.
We denote the space of such forms by $S_k^n$.
Further assume that $\oldnorm{F}_2=1$, where $\oldnorm{F}_2$ denotes the Petersson norm of $F$.
Let $Z=X+iY \in \hn$, the Siegel's upper half-space. Then we define the sup-norm of $F$ (which is indeed finite, see e.g. \cite{klingen1990siegel}) as follows
\begin{equation} \label{hgamma-sum}
\on{F}_\infty :=\norm{\det(Y)^{k/2} F}_\infty = 
\sup_{ Z \in \hn}{
\det(Y)^{k/2}|F(Z)|}
.\end{equation}
By the invariance of $\det(Y)^{k/2}|F(Z)|$ under $\Sp{\z}$, we can restrict ourselves to a suitable fundamental domain.
The quest for suitable upper bounds on $\oldnorm{F}_\infty$ is the old and classical sup-norm problem,
which has its genesis in the question of understanding the mass distribution of the eigenfunctions
on a complete Riemannian manifold of dimension $d$ without boundary, pioneered by \cite{iwaniec1995supnorms}.
We are interested in arithmetic quotients of the form $X=L \backslash S$ where the symmetric space
$S=\hn$ (Siegel's upper half-space of degree $n$, see section~\ref{prelim}),
the arithmetic lattice is $L=\Gamma_n= \Sp \z$ and the group of isometries $G= \Sp \R/\pm$.
We will work in this non-compact setting and let $k \to \infty$.
However, we would also consider compact subsets of $X$ (see section~\ref{ampl}) and go beyond the expected `generic' bound.

Let us mention some results known in our setting.
For a holomorphic cuspidal Hecke eigenform $f$ on $\SL[2]{\z}$ of weight $k$,
H. Xia \cite{xia2007norms} has shown the sharp result
\begin{equation} \label{xia}
  k^{\frac{1}{4}-\epsilon}
  \ll_{\epsilon}\oldnorm{f}_\infty
  \ll_{\epsilon}k^{\frac{1}{4}+\epsilon}.
\end{equation}
The proof uses deep results on the bounds on $L(1, \mrm{sym}^2(f))$ and the Fourier expansion of $f$.
Among some other results available in the weight aspect in the holomorphic setting, we mention \cite{kramer, fjk2016, kramer1, blomer2015size, blomer2016supnorm, cogdell2011bergman, das2015supnorms, das-rms-supnorms}. In fact the results on Bergman kernel estimates of Kramer \cite{kramer, kramer1}, especially \cite{fjk2016} et. al. was one of our main motivations behind this article. Let $B^1_k(\Gamma)$ be an orthonormal basis for $S_k(\Gamma)$, where $\Gamma \subset \mrm{SL}(2,\R)$ is a Fuchsian subgroup of the first kind. Then more precisely it was shown in \cite{fjk2016} that for $k \ge 2$,
\begin{align*}
    \sup\nolimits_{z \in \mc H} \sum \nolimits_{f \in \mf B^1_k(\Gamma)} \im(z)^k |f(z)|^2 = O_\Gamma(k^{3/2}),
\end{align*}
whereas quite explicit estimates for the same when $\Gamma=\mrm{SL}(2,\z)$ were obtained in \cite{kramer1}. The main approach in these papers was the usage of the heat-kernel method.
However in this article we adopt a more direct, hands-on perspective on the same problem, using analytic methods.

In the context of $n=2$, \cite{blomer2015size} V. Blomer considered the case of the Saito-Kurokawa lifts (from elliptic cusp forms)
and based on the results obtained, speculated about the following conjecture on the size of Siegel Hecke eigenforms of degree $n$ and provided some support towards it.
\begin{conjecture} \label{bloconj}
Let $F \in S^n_k$ be an $L^2$-normalised Hecke eigenform. Then,
\begin{equation} \label{sklift}
\oldnorm{F}_\infty = k^{n(n+1)/8 +o(1)} \q \q (\text{as } k \to \infty).
\end{equation}
\end{conjecture}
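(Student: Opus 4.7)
The plan is to attack the conjecture in its two directions separately, since the lower bound $\on{F}_\infty \gg k^{n(n+1)/8 - o(1)}$ and the upper bound $\on{F}_\infty \ll k^{n(n+1)/8 + o(1)}$ are of very different difficulty.

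For the lower bound, I would argue via the diagonal Bergman kernel
\begin{equation*}
B_k(Z,Z) \;=\; \sum\nolimits_{F \in \basis} \det(\im Z)^{k}\, |F(Z)|^2,
\end{equation*}
combined with the Weyl-type dimension estimate $\dim S^n_k \asymp k^{n(n+1)/2}$. If one can exhibit a single point $Z_0 \in \hn$ at which $B_k(Z_0, Z_0) \gg k^{n(n+1)/2}$ (which should be tractable at a high-symmetry point such as $Z_0 = iI$, or in a cuspidal region where one controls the kernel via its Fourier expansion), then averaging over an orthonormal basis already forces some element of size $\gg k^{n(n+1)/8 - o(1)}$ at $Z_0$. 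To promote this from "some element" to a specified Hecke eigenform $F$, the natural move is to decompose along Hecke eigenspaces and invoke equidistribution of Hecke eigenvalues, so that no individual eigenform is anomalously small at $Z_0$.

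For the upper bound, I would apply the amplification method. Choose a short linear combination $A = \sum_p x_p T_p$ of Hecke operators so that $F$ is an eigenvector of $A$ with large eigenvalue $\lambda_A(F)$, and majorise $|\lambda_A(F)|^2 |F(Z)|^2$ via the amplified pre-trace formula
\begin{equation*}
|\lambda_A(F)|^2 |F(Z)|^2 \;\le\; \sum\nolimits_{F' \in \basis} |A F'(Z)|^2,
\end{equation*}
reducing matters to counting elements $\gamma$ in the support of Hecke operators with $\gamma Z$ close to $Z$, weighted by off-diagonal bounds on $B_k(Z, \gamma Z)$. Near-optimal amplifier length combined with sharp off-diagonal kernel estimates would, in principle, deliver the conjectural exponent $n(n+1)/8$.

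The main obstacle is unquestionably the upper bound. Even for $n = 2$ the best bound in this paper, $O_\epsilon(k^{9/4+\epsilon})$, lies far above the conjectured $k^{3/4+o(1)}$. The essential difficulties are twofold: first, uniform off-diagonal decay of $B_k(Z,W)$ on $\hn$ is delicate, especially as $\det(\im Z)$ grows into the cusp, where both the kernel estimates and the lattice-counting problem degrade; second, the usable amplifier strength is capped by the current state of knowledge on Hecke eigenvalues for degree-$n$ Siegel forms. Closing the gap would likely require either a substantial improvement in counting elements $\gamma \in \Sp\z$ with $\gamma Z$ near $Z$ in higher rank, or a fundamentally new input such as an arithmetic QUE statement for $\Sp\z \backslash \hn$ or subconvex bounds for the relevant degree-$n$ automorphic $L$-functions attached to $F$.
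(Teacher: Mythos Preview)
The statement is a \emph{conjecture}; the paper does not prove it, and your proposal (to your credit) ultimately concedes that the upper bound is out of reach. So there is no ``paper's own proof'' to compare against. What the paper does establish toward \conjectureRef{bloconj} is the existence of \emph{some} Hecke eigenform $F_1 \in S^n_k$ with $\on{F_1}_\infty \gg k^{n(n+1)/8}$ (see the discussion after the Corollaries in the introduction and \S\ref{bklbd}), which is strictly weaker than the conjectured lower bound for \emph{every} Hecke eigenform.

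Two concrete gaps in your lower-bound plan. First, the exponent is off: to extract a term of size $k^{n(n+1)/8}$ from the Bergman kernel by pigeonhole over $\dim S^n_k \asymp k^{n(n+1)/2}$ basis elements, you need $\bkzz[Z_0] \gg k^{3n(n+1)/4}$, not $k^{n(n+1)/2}$. The paper obtains exactly this via the Petersson formula at $T = 1_n$ and the non-vanishing input \eqref{pnonv}, with $Y_0 = (k/4\pi)1_n$; your suggestion of $Z_0 = i1_n$ would not work, as $\bkzz$ is $O_n(k^{n(n+1)/2})$ at bounded height (cf.~\eqref{hkconv1}). Second, and more seriously, the step ``invoke equidistribution of Hecke eigenvalues so that no individual eigenform is anomalously small at $Z_0$'' is not a move one knows how to make. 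Equidistribution of Hecke eigenvalues (Sato--Tate type statements) says nothing about pointwise values $|F(Z_0)|$; ruling out that a given eigenform is small at a point is essentially a QUE-type statement in the weight aspect, which is open even for $n=1$ in the strength you would need. The paper's pigeonhole argument only yields existence of one large eigenform, and there is no known mechanism to upgrade this to all eigenforms.

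Your discussion of the upper bound is a fair summary of the obstacles, and the paper's amplification in \S\ref{ampl} indeed follows the template you describe, but only on fixed compact $\Omega$ and only saving a small power over the generic bound $k^{3/2}$ when $n=2$; globally the paper gets $k^{9/4+\epsilon}$ from Bergman-kernel and Fourier-expansion estimates, not amplification.
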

This conjecture is consistent with the conjectural size of the Bergman kernel for $\Sp \z$, see  \conjectureRef{bergconj} below.
Assuming the Lindel\"{o}f hypothesis on all quadratic twists of a Hecke newform of weight $k$ on $\mrm{SL}(2, \z)$,
Blomer shows that the upper bound as in \eqref{sklift} holds when $n=2$. See \cite{das-rms-supnorms} for an alternative approach.
The aim of this paper is to establish for all $n \ge 1$ bounds for $\oldnorm{F}_\infty$ which are polynomial in $k$. Such kind of bounds should be expected from the generalities on the theory of automorphic forms.

When one moves to higher degrees, i.e., considers holomorphic modular forms on $\Sp{\z}$, the situation is far from understood.
The only result in the literature seems to be that of \cite{blomer2016supnorm}, where Siegel-Maa{\ss} Hecke eigenforms
of weight $0$ were considered in a fixed compact subset of $\Gamma_2 \backslash \mc H_2$
and a power saving was obtained over the `generic' bound, by the method of amplification.
One of the main difficulties in higher degrees, as also mentioned in the introduction to
\cite{blomer2015size}, is that we have very little control over the Fourier expansion of $F$,
even if it is a Hecke eigenform, as opposed to the case of $\mrm{GL}(n)$. So naturally, this article is heavily reliant on properties of the Bergman kernel, which is a powerful tool in studying the sup-norm problem.

Recall that one expression for the Bergman kernel $B_k(Z,W)$ ($Z,W \in \mathbb H_n$) is given by (we suppress the dependence on $n$) 
\[ B_k(Z,W)= \sum_{G \in \mathcal B^n_k} G(Z) \overline{ G(W)} \]
where $\basis$ denotes an orthonormal basis of $S_k^n$. More intrinsically, the Bergman kernel can be expressed as a certain average over $\Gamma_n$:
\begin{equation} \label{bergdef0}
 B_k(Z,W)=  \frac{a(n,k)}{2} \, \summ_{\gamma \in \Gamma_n} \det \Big(\frac{Z- \overline{W}}{2i}\Big)^{-k} \Big\vert_k^{(W)}\gamma \q \q (k \ge 2n+2) ;
\end{equation}
where $a(n,k)$ is a constant (cf.~\eqref{ank}). The Bergman kernel appears in the theory of automorphic functions in many connections, e.g., as a reproducing kernel for the space $S^n_k$ (cf.~\cite{klingen1990siegel}) and is used in the theory of metrization, for dimension formulae, as the ratio of the `canonical' and the hyperbolic metrics on the quotient space $\Gamma_n \backslash \hn$ (cf.~\cite{kramer3}) etc. Basic properties of $B(Z,W)$ have been studied for e.g. in \cite{cogdell2011bergman, klingen1990siegel}.

We now define one of the central objects of study in this paper:
\begin{equation} \label{bergdef1}
\bkzz := \sum\nolimits_{F\in\basis}{|F(Z)|^2\det(Y)^k} \q \q (= \det(Y)^k B_k(Z,Z)),
\end{equation}
and by abuse of notation continue to call this as the Bergman kernel as well, we believe that there will be no confusion, as the meaning will be clear.

From \eqref{bergdef1}, it is clear that $\bkzz$ is invariant under $\Sp \z$, is bounded on $\hn$, and that this bound should depend only on $n$ and $k$. Moreover, there are hints towards a conjecture (see \cite{anil, kramer, kramervid}, even though not mentioned explicitly anywhere) that $\mbb B_k$ satisfies the following  bound.
\begin{conjecture} \label{bergconj}
With the above notation and setting, the following is true.
\begin{equation} \label{bergconjeq}
 \sup \nolimits_{Z \in \hn} \bkzz \asymp_{n} k^{\frac{3n(n+1)}{4} }.
\end{equation}
\end{conjecture}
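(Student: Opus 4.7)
The plan is to attack the two bounds separately. For the lower bound $\bkzz \gg_n k^{3n(n+1)/4}$, I would use the diagonal representation $\bkzz = \sum_{F \in \basis} |F(Z)|^2\det(Y)^k$: retaining a single summand gives $\bkzz \geq |G(Z)|^2\det(Y)^k/\oldnorm{G}_2^2$ for any nonzero $G \in S^n_k$, so it suffices to exhibit a cusp form and a point at which this lower bound is of the desired size. Natural test objects are Poincar\'e series $P^n_T$ attached to a positive-definite half-integral $T$ with $\det T \asymp k^n$, evaluated at a point $Z_0 = X_0 + iT^{-1}$ of stationary phase. A standard computation of $\oldnorm{P^n_T}_2$ via Petersson's inner product formula (which expresses the norm in terms of the $T$-th Fourier coefficient of $P^n_T$ itself), combined with a stationary-phase estimate for $P^n_T(Z_0)$, would yield the claimed lower bound after optimization over $T$.

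For the upper bound, I would expand $\bkzz$ via \eqref{bergdef0} and take absolute values termwise. Using the identity $|\det(C_\gamma Z+D_\gamma)|^2 = \det(Y)/\det(\im\gamma Z)$ together with the $\Gamma_n$-invariant ``cross-ratio''
\[
\rho(Z, W) := \frac{|\det((Z-\bar W)/2i)|^2}{\det(\im Z)\det(\im W)} \geq 1
\]
(equality iff $Z = W$), the Poincar\'e sum collapses to
\[
\bkzz \leq \frac{a(n,k)}{2}\sum_{\gamma\in\Gamma_n}\rho(Z, \gamma Z)^{-k/2}.
\]
The standard Siegel-gamma computation yields $a(n,k) \asymp_n k^{n(n+1)/2}$, so it suffices to bound the remaining sum by $O_n(k^{n(n+1)/4})$ uniformly on the region of $\fn$ where the termwise estimate can plausibly be tight; deeper in the cusp, rapid Fourier-theoretic decay of cusp forms provides an a priori bound that supersedes the termwise one. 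Introducing the orbital counting function $N_Z(T) := \#\{\gamma \in \Gamma_n : \rho(Z, \gamma Z) \leq T\}$ and applying Abel summation reduces the problem to a sharp estimate on $N_Z(T)$, which I would attack via reduction theory of $\Sp{\z}$: parametrise $\gamma$ by its bottom row $(C_\gamma, D_\gamma)$ modulo the left action of $\GL{\z}$, and count lattice points using Minkowski reduction applied to $Y$.

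The main obstacle is twofold: (a) the near-cusp behavior, where one must track a delicate height factor inside $N_Z(T)$ depending sensitively on the Minkowski-reduced data of $Y$, and (b) the intrinsic looseness of termwise absolute-value bounds away from the extremal region, which the Poincar\'e sum itself does not see. Both appear manageable for $n = 1, 2$, where the reduction theory and cusp stratification of $\Gamma_n\backslash\hn$ are fully explicit (and, as indicated in the excerpt, this is where the authors' matching upper bounds apply); but for $n \geq 3$ the Satake boundary of $\fn$ has multiple strata of differing dimensions, and reaching the sharp exponent will likely demand genuine cancellation in the $\gamma$-sum, e.g.\ via the symplectic Petersson formula expanded into Kloosterman-type sums and Bessel-type integrals whose transcendental asymptotics are themselves nontrivial in higher degree.
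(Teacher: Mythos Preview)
First, note that the statement is labelled a \emph{conjecture}: the paper proves the lower bound for all $n\ge 1$ (subsection~\ref{bklbd}) and the matching upper bound only for $n=1,2$ (the latter up to $k^\epsilon$). For $n\ge 3$ the upper bound remains open, so a full proof is not expected here.

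Your lower-bound strategy contains a scaling error. With $\det T\asymp k^n$ the stationary point $Y_0=\frac{k}{4\pi}T^{-1}$ has $\det Y_0\asymp 1$, and the resulting lower bound carries an extra factor $\det(T)^{-(n+1)/2}\asymp k^{-n(n+1)/2}$, falling short of the target. The correct choice is a \emph{fixed} $T$ (the paper takes $T=1_n$) with $Y_0=\frac{k}{4\pi}1_n$, so that $\det Y_0\asymp k^n$. More seriously, your route through a single $P_T$ requires a pointwise lower bound $|P_T(Z_0)|\gg e^{-2\pi\tr(TY_0)}$, i.e.\ that the non-identity cosets do not destroy the identity term; this is essentially as hard as the non-vanishing problem itself. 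The paper sidesteps this: integrate $\bkzz$ over $X\bmod 1$ at fixed $Y_0$, which by Parseval gives $\det(Y_0)^k e^{-4\pi\tr Y_0}\sum_F|a_F(1_n)|^2$; Petersson then converts the sum to $c_{n,k}^{-1}a_{P_{1_n}}(1_n)$, and only the \emph{Fourier coefficient} bound $a_{P_{1_n}}(1_n)\gg 1$ is needed, supplied by \cite{kowalski2011note}.

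For the upper bound your absolute-value/orbital-counting approach is close in spirit to the paper's Bergman-kernel analysis (section~\ref{bksec1}), though the paper uses the factorisation $\gamma=\gamma_\infty\gamma_U\tilde\gamma$ and treats the $S$-, $U$-, and $\{C,D\}$-sums separately via the Lipschitz formula and a delicate $\GL{\z}$-sum (\lemref{headache}) rather than a single counting function $N_Z(T)$. The decisive point, however, is that even for $n=2$ the Bergman side alone yields only $\bkzz\ll k^{3}\det(Y)^{9/4+\epsilon}$, which blows up high in the cusp. The sharp bound there comes from a \emph{quantitative} Fourier-side estimate (sections~\ref{n2-fouriercoeffs}--\ref{n=2case}) built on Kitaoka's bounds for $a_{P_T}(T)$ and a refined count of the effective support $\cy$ of the Fourier expansion. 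Your one-line appeal to ``rapid Fourier-theoretic decay'' does not capture this: the transition region (roughly $k^{2/3}\le\det Y\le k^2$ when $n=2$) is where the real work lies, and for $n\ge 3$ the paper cannot close the gap precisely because the analogue of Kitaoka's input is unavailable.
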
 

Note that the exponent $\frac{3n(n+1)}{4}$ equals $3/2 \cdot \dim(\mc A_n)$, where $\dim(\mc A_n)$ is the complex dimension of the Siegel modular variety $\mc A_n=\Gamma_n \backslash \hn$. Usually, such bounds are obtained by a detailed analysis of the corresponding heat kernel (see e.g. \cite{kramer}).
Thus granting this, one would obtain, by dropping all but one term in \eqref{bergconjeq}, the following bound for the sup-norm of $F$:
$\oldnorm{F}_\infty \ll_n k^{\frac{3n(n+1)}{8}}$.

Notice here that the conjectural bound $\oldnorm{F}_\infty \ll_{n, \epsilon} k^{n(n+1)/8 +\epsilon}$ (cf.~\eqref{sklift}) leads to the upper bound in \eqref{bergconjeq} by summing over a basis of $S^n_k$.
One of our results in this article is the following bound  for the quantity $\bkzz$, with the aim of obtaining sup-norm bounds for individual cusp forms.
As far as we are aware,
this is the first result\footnote{A video lecture \cite{kramervid} outlining a bound on the Bergman kernel for $\Sp[2]{\z}$
(from which the upper bound in \eqref{supbds} when $n=2$ follows) is available; but not any pre-print, to our knowledge.
Anyway, the methods seem very different.} in the literature which treats the sup-norm problem for holomorphic Siegel modular forms in the weight aspect {\em uniformly in $Y$}, certainly when $n \ge 3$. In compact sets, the behaviour of $\bkzz$ is well-understood, see e.g. section~\ref{ampl}.
We obtain the following result.
\begin{thm} \label{mainthm}
Let $\epsilon>0$ be given. Put $\ell(n):=3n(n+1)/4$ and suppose $k$ is even. Then with the above notation and setting,
\begin{align}  \label{supbds}
  \left.\begin{array}{l} k^{\ell(1)} \\  k^{\ell(2)} \\ k^{\ell(n)}
  \end{array}\right\}
  \ll_n \sup_{Z\in\hn} \bkzz 
  \ll_{n,\epsilon} 
  \left\{\begin{array}{ll} k^{\ell(1)} \q & (n=1) \\
  k^{\ell(2) + \epsilon} \q & (n=2) \\
  k^{5\ell(n)/3 -3(n+1)/4 +\epsilon} \q  &(n \ge 3)  \\
  \end{array}\right..
\end{align}
\end{thm}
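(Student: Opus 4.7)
The strategy combines the Poincaré-type expansion \eqref{bergdef0} of the reproducing kernel with the sum-of-squares form \eqref{bergdef1} together with the Fourier expansion of each $F \in \basis$.

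For the lower bound, my plan is to evaluate $\bkzz$ at $Z_0 = i Y_0$ with $Y_0 = (k/(4\pi)) T_0^{-1}$, where $T_0$ is a smallest positive-definite half-integral symmetric matrix of size $n$. At this point the Fourier expansion $F(Z) = \sum_{T > 0} a_F(T) e(\tr(TZ))$ is dominated by the $T = T_0$ mode, since for any other half-integral $T > 0$ one has $\tr(T Y_0) > \tr(T_0 Y_0) = nk/(4\pi)$, producing exponential decay in $e^{-2\pi \tr(T Y_0)}$. The Petersson formula for $\Sp \z$ then evaluates the diagonal contribution $\sum_{F \in \basis} |a_F(T_0)|^2$ asymptotically, up to lower-order Kloosterman--Bessel terms, as a ratio involving $(4\pi)^{nk}$ and a product of Siegel gamma factors. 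Applying Stirling's formula to these gamma factors yields $\bkzz\big|_{Z_0} \gg_n k^{3n(n+1)/4} = k^{\ell(n)}$; for $n=1$ this reduces to the elementary asymptotic $k^k e^{-k}/\Gamma(k-1) \asymp k^{3/2}$.

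For the upper bound, setting $W = Z$ in \eqref{bergdef0} and using $\Sp \z$-invariance to restrict $Z$ to the Minkowski-reduced fundamental domain $\fn$, the identity coset contributes $a(n,k)/2 \asymp_n k^{n(n+1)/2}$, comfortably below the claimed bound. The remaining sum, after writing $\gamma = \smat{A}{B}{C}{D}$ and stratifying by $\rk(C)$, reduces to a lattice-point counting problem for symplectic cosets. For $n = 1$ this is the classical Eisenstein lattice sum, handled by an elementary volume/contour argument to give $k^{3/2}$. For $n = 2$, a sharp volume count on symplectic cosets of prescribed $\rk(C)$, together with a careful treatment of the singular strata $\rk C < n$, should yield $k^{\ell(2) + \epsilon}$.

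The principal obstacle for $n \ge 3$ is the efficient handling of the singular strata $0 \le \rk C < n$: a naive volume count loses a factor of roughly $k^{(n+1)(2n-3)/4}$ compared with the conjectural prediction (note $(n+1)(2n-3)/4$ is precisely the gap $5\ell(n)/3 - 3(n+1)/4 - \ell(n)$). My plan in this regime is to combine a uniform determinantal lower bound on $|\det(CZ+D)|$ (derived from the Iwasawa coordinates of $Z$) with a crude but uniform estimate on the number of pairs $(C, D)$ of a given singular-value profile; this should reproduce the stated exponent $5\ell(n)/3 - 3(n+1)/4 + \epsilon$, while closing the remaining gap to $\ell(n)$ would require genuinely finer arithmetic input on the distribution of low-rank symplectic cosets.
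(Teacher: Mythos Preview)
Your lower bound argument has a genuine gap. Evaluating $\bkzz$ at a single point $Z_0=iY_0$ gives $\sum_F \det(Y_0)^k |F(Z_0)|^2$, and you cannot simply declare that $|F(Z_0)|$ is governed by the single Fourier mode $a_F(T_0)$: there is no mechanism preventing cancellation among the infinitely many terms of the Fourier series. Moreover, your claim that $\tr(TY_0)>\tr(T_0Y_0)$ for every $T\neq T_0$ is false---with $T_0=1_n$ and $Y_0=(k/4\pi)1_n$, any $T\in\Lambda_n$ with all diagonal entries equal to $1$ (and there are many such) has $\tr(TY_0)=\tr(T_0Y_0)$. The paper sidesteps both issues by integrating over $X\bmod 1$ first:
\[
\sup_{Z}\bkzz \ge \int_{X\bmod 1}\det(Y_0)^k\sum_F|F(X+iY_0)|^2\,dX = \det(Y_0)^k e^{-4\pi\tr Y_0}\sum_F|a_F(1_n)|^2,
\]
which isolates a single Fourier coefficient by orthogonality and then invokes the Petersson formula together with the lower bound $a_{1_n}(1_n)\gg 1$ from \cite{kowalski2011note}. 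This integration step is the key idea you are missing.

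Your upper bound strategy is also structurally different from the paper's and, as stated, incomplete. You propose to work exclusively with the expansion \eqref{bergdef0}, stratifying by $\rk(C)$. The paper instead uses a \emph{two-regime} argument: the Bergman kernel bound (\thmref{bkbd}, proved via the parabolic factorization $\gamma=\gamma_\infty\gamma_U\tilde\gamma$, not a $\rk(C)$ stratification) yields $\bkzz\ll k^{n(n+1)/2}\det(Y)^{3(n+1)/4+\epsilon}y_1^{-n(n+1)/2}$, which \emph{grows} with $\det(Y)$ and is only useful when $y_n\ll k$. In the complementary region $y_n\gg k$, the paper uses the Fourier expansion (\thmref{f-febd}) to obtain exponential decay. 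Your proposal never addresses what happens high in the cusp, and no lattice-point count on symplectic cosets alone will give a bound uniform as $\det(Y)\to\infty$. Incidentally, the quantity $(n+1)(2n-3)/4$ that you correctly identify as the gap does arise in the paper, but as the constant $\gamma_n$ in \lemref{beta0}, which bounds $\sum_F|a_F(T)|^2$ via the Bergman kernel---it is a Fourier-side loss, not a direct volume-count loss on the $\{C,D\}$ sum.
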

Therefore, when $n=1, 2$ the first two set of inequalities in \eqref{supbds} prove \conjectureRef{bergconj} (up to the $\epsilon$ in the upper bound when $n=2$). For the proof of this theorem, see section~\ref{mainthmproof}. Of course our bound for $n \ge 3$ is far away from the conjectured bound, but the reader should also take into account the severe paucity of available results when $n \ge 3$, which are otherwise deemed standard, e.g. the analysis of the Fourier expansion of Poincar\'e series or reasonable bounds on Fourier coefficients (say Deligne's bound for elliptic newforms). One novelty of this paper is to get around this point when $n \ge 3$ (cf. \lemref{beta0} and the discussion after \eqref{cnk}).
Towards the sup-norm, we thus have the following.
\begin{cor}
Let $F \in S^n_k$ be $L^2$ normalised. Then 
\begin{align} \oldnorm{F}_\infty \ll_{n,\epsilon}
    \begin{cases}
      k^{9/4 +\epsilon} & \q (n=2), \\
      k^{(5n-3)(n+1)/8 +\epsilon} & \q (n \ge 3).
    \end{cases}
\end{align}
\end{cor}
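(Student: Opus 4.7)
The plan is to derive the corollary directly from \thmref{mainthm} via the standard positivity trick that extracts a single term from the Bergman kernel. Given an $L^2$-normalised $F\in S_k^n$, I would extend $F$ to an orthonormal basis $\basis$ of $S_k^n$ containing $F$. Since each summand in \eqref{bergdef1} is non-negative, for every $Z=X+iY\in\hn$ one has
\[
\det(Y)^k|F(Z)|^2 \;\le\; \sum_{G\in\basis}\det(Y)^k|G(Z)|^2 \;=\; \bkzz.
\]
Taking the supremum in $Z$ and then a square root yields $\oldnorm{F}_\infty \le \bigl(\sup_{Z\in\hn}\bkzz\bigr)^{1/2}$, which reduces the corollary to substituting the upper bounds of \thmref{mainthm}.

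The remaining step is purely arithmetic. For $n=2$, \thmref{mainthm} gives $\sup_Z\bkzz\ll_\epsilon k^{\ell(2)+\epsilon}=k^{9/2+\epsilon}$, whose square root is $k^{9/4+\epsilon/2}$, matching the first bound after relabelling $\epsilon$. For $n\ge 3$, I would simplify the exponent: with $\ell(n)=3n(n+1)/4$,
\[
\tfrac{5\ell(n)}{3}-\tfrac{3(n+1)}{4}\;=\;\tfrac{5n(n+1)}{4}-\tfrac{3(n+1)}{4}\;=\;\tfrac{(n+1)(5n-3)}{4},
\]
so taking a square root produces the exponent $(5n-3)(n+1)/8$, giving the stated bound up to $\epsilon$.

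Since the entire analytic content has already been packaged inside \thmref{mainthm}, there is no genuine obstacle in this deduction; the corollary amounts to bookkeeping. In particular, no further appeal to Hecke structure, Fourier expansions, or amplification is required at this stage, and $F$ need not even be a Hecke eigenform. The point of formulating \thmref{mainthm} for the full Bergman kernel (rather than for an individual eigenform) is precisely to enable this one-line passage from a spectrally averaged bound to a pointwise sup-norm bound for an arbitrary $L^2$-normalised cusp form.
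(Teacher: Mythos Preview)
Your proposal is correct and is precisely the (implicit) argument the paper has in mind: the corollary is stated immediately after \thmref{mainthm} with no separate proof, since it follows by dropping all but one term in the definition \eqref{bergdef1} of $\bkzz$ and taking a square root. Your arithmetic check of the exponents is accurate, and your remark that $F$ need not be a Hecke eigenform is also noted in the paper just after the corollary.
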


Note that $F$ above need not be a Hecke eigenform.
Since $\dim \snk \asymp k^{n(n+1)/2}$  (cf.~\cite{klingen1990siegel}), the bounds in \eqref{supbds} therefore show that 
for $k$ even and large enough, there exist $L^2$ normalized Hecke eigenform $F_1 \in S^n_k$ such that $\oldnorm{F_1}_\infty \gg k^{n(n+1)/8}$, if we choose the orthonormal basis to be consisting of Hecke eigenforms.
This slightly improves \cite[Theorem~1]{blomer2015size} where a lower bound $k^{n(n+1)/8 - \epsilon}$ was shown for some cusp form (arising as an Ikeda lift) of even degree $n$. Interestingly, the sizes of cusp forms can go at least up to the square-root of the size of the BK, as the corollary below shows.
\begin{cor}
There exist an $L^2$-normalised cusp form $G \in S^n_k$  (which is not an eigenform) such that $\norm{G}_\infty \gg k^{3n(n+1)/8}$.
\end{cor}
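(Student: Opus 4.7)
My plan is to exhibit $G$ as an $L^2$-normalised ``slice'' of the Bergman kernel. By the lower bound in \thmref{mainthm} (applied for $k$ even and sufficiently large), there exists $Z_0 = X_0 + iY_0 \in \hn$ with $\bkzz[Z_0] \gg_n k^{3n(n+1)/4}$. Since $B_k(Z_0, Z_0) > 0$, I would set
\[
G(Z) := \frac{B_k(Z, Z_0)}{\sqrt{B_k(Z_0, Z_0)}} \in S^n_k.
\]
By the reproducing property of the Bergman kernel, $H(Z_0) = \langle H, B_k(\cdot, Z_0)\rangle$ for every $H \in S^n_k$; taking $H = B_k(\cdot, Z_0)$ yields $\norm{B_k(\cdot, Z_0)}_2^2 = B_k(Z_0, Z_0)$, and hence $\norm{G}_2 = 1$.

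Evaluating $G$ at $Z = Z_0$ gives $G(Z_0) = \sqrt{B_k(Z_0, Z_0)}$, so
\[
\norm{G}_\infty \ge \det(Y_0)^{k/2}\, |G(Z_0)| = \sqrt{\det(Y_0)^{k}\, B_k(Z_0, Z_0)} = \sqrt{\bkzz[Z_0]} \gg_n k^{3n(n+1)/8},
\]
which is the claimed lower bound.

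To ensure that $G$ is not a Hecke eigenform, I would take the orthonormal basis $\basis = \{F_1, \ldots, F_d\}$ to consist of Hecke eigenforms, so that
\[
B_k(Z, Z_0) = \sum\nolimits_{j=1}^{d} \overline{F_j(Z_0)}\, F_j(Z).
\]
Then $G$ is proportional to a single $F_j$ if and only if $F_i(Z_0) = 0$ for all $i \neq j$, a condition cutting out a finite union of proper complex-analytic subvarieties of $\hn$. Since the open set $\{Z \in \hn : \bkzz \gg k^{3n(n+1)/4}\}$ is non-empty, one can pick $Z_0$ inside it but away from this ``bad'' locus, thereby guaranteeing that $G$ is not an eigenform. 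The argument is really a packaging of the lower bound already established in \thmref{mainthm} together with the reproducing property of $B_k$, so I anticipate no serious obstacle in carrying it out.
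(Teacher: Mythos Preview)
Your proof is correct and is essentially identical to the paper's: the paper defines $G(Z) := \mathbb{B}_k(Z_0,Z_0)^{-1/2}\det(Y_0)^{k/2}\sum_{F\in\basis}\overline{F(Z_0)}F(Z)$, which is exactly your $B_k(Z,Z_0)/\sqrt{B_k(Z_0,Z_0)}$, and then computes $\det(Y_0)^{k/2}|G(Z_0)| = \mathbb{B}_k(Z_0,Z_0)^{1/2}$ just as you do. Your additional paragraph justifying that $Z_0$ can be chosen so that $G$ is genuinely not an eigenform (by avoiding the zero loci of all but one $F_j$) is a nice touch that the paper leaves implicit; note only that it requires $\dim S^n_k \ge 2$, which holds for $k$ large.
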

From \thmref{mainthm} (see also \eqref{localize}), there exists $Z_0 \in \hn$ such that $\mbb B_k(Z_0,Z_0) \gg
k^{3n(n+1)/4}$. Now let us define \footnote{We are grateful to G. Harcos for pointing this out.}
\begin{equation}
    G(Z) := {\mathbb{B}_k(Z_0,Z_0)}^{-1/2}
\sumn_{F \in \mc B_k^n} \overline{ F(Z_0) }  F(Z) \det(Y_0)^{k/2}
\end{equation}
Then $G$ is $L^2$-normalized, and its $L^\infty$-norm is at least
$ \det(Y_0)^{k/2}|G(Z_0)| = {\mathbb{B}_k(Z_0,Z_0)}^{1/2} \gg k^{3n(n+1)/8}$.

When $n=2$, put 
\[ \bkzz^\sharp := \summ_{F \in B^\sharp_k} \det(Y)^k |F(Z)|^2,\]
where $B^\sharp_k$ is a Hecke basis for $S^\sharp_k \subset S^2_k$, the orthogonal complement of the space spanned by the Saito-Kurokawa lifts.
Then the following corollary shows that the bulk of the contribution to $\bkzz$ comes from the non-lifts, as is expected. Let $k$ be even and large.
\begin{cor} \label{skcor}
One has $k^{9/2} \ll \sup_Z \bkzz^\sharp \ll_\epsilon k^{9/2+\epsilon}$. There exist $F_2 \in S^\sharp_k$ such that
 $\oldnorm{F_2}_\infty \gg k^{3/4}$.
\end{cor}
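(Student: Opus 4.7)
The plan is to exploit the Hecke-equivariant orthogonal decomposition $S^2_k = S^{\mrm{SK}}_k \oplus S^\sharp_k$, which yields the pointwise identity $\bkzz = \bkzz^{\mrm{SK}} + \bkzz^\sharp$ (with $\bkzz^{\mrm{SK}}$ the Bergman kernel of the Saito--Kurokawa subspace, defined analogously to $\bkzz^\sharp$), both summands being non-negative. The upper bound in the first assertion is then immediate from \thmref{mainthm} applied with $n=2$: $\sup_Z \bkzz^\sharp \le \sup_Z \bkzz \ll_\epsilon k^{9/2+\epsilon}$.

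For the matching lower bound, \thmref{mainthm} furnishes a point $Z_0 \in \mc H_2$ with $\bkzz[Z_0] \gg k^{9/2}$. Once we establish $\sup_Z \bkzz^{\mrm{SK}} \ll k^{9/2-\delta}$ for some $\delta>0$, subtracting at $Z_0$ yields $\mbb B^\sharp_k(Z_0, Z_0) \gg k^{9/2}$. Since $S^{\mrm{SK}}_k$ is the Maass lift of $S_{2k-2}(\mrm{SL}(2,\z))$, we have $\dim S^{\mrm{SK}}_k \asymp k$, so the trivial bound
\[ \sup_Z \bkzz^{\mrm{SK}} \le \dim S^{\mrm{SK}}_k \cdot \max_{F \in B^{\mrm{SK}}_k} \oldnorm{F}_\infty^2 \ll k \cdot \max_F \oldnorm{F}_\infty^2 \]
reduces the task to an individual sup-norm bound $\oldnorm{F}_\infty \ll k^{\beta}$ on SK lifts with \emph{any} exponent $\beta < 7/4$. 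Such a bound is available unconditionally via the analysis in \cite{blomer2015size}, which expresses the pointwise size of a normalized SK lift $F$ in terms of central values of quadratic twists $L(1/2, f \otimes \chi_D)$ of the underlying elliptic Hecke form $f$; combined with standard (in fact already convexity) estimates for these $L$-values, one obtains an exponent comfortably below $7/4$.

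The existence of $F_2$ is then a short pigeonhole consequence of the lower bound just proved. Interchanging sup and sum,
\[ \sum_{F \in B^\sharp_k} \oldnorm{F}_\infty^2 \ge \sup_Z \sum_{F \in B^\sharp_k} \det(Y)^k |F(Z)|^2 = \sup_Z \bkzz^\sharp \gg k^{9/2}. \]
Since $|B^\sharp_k| = \dim S^\sharp_k = \dim S^2_k - \dim S^{\mrm{SK}}_k \asymp k^3$ (the SK contribution being negligible), at least one $F_2 \in B^\sharp_k$ must satisfy $\oldnorm{F_2}_\infty^2 \gg k^{9/2}/k^3 = k^{3/2}$, giving $\oldnorm{F_2}_\infty \gg k^{3/4}$.

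The hard part is the second paragraph: securing an unconditional sup-norm bound for SK lifts that beats $k^{7/4-\delta}$. The generic/Bergman-kernel bound $\oldnorm{F}_\infty \ll k^{9/4+\epsilon}$ coming from \thmref{mainthm} itself would only give $\bkzz^{\mrm{SK}} \ll k^{11/2+\epsilon}$, which is useless here, so one must genuinely use that SK lifts are explicit and their sup-norms are controlled by families of $L$-values. Everything else is bookkeeping on top of \thmref{mainthm} and the dimension formula $\dim S^2_k \asymp k^3$.
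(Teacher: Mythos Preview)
Your overall strategy matches the paper's: decompose $\bkzz = \bkzz^{\mrm{SK}} + \bkzz^\sharp$, use \thmref{mainthm} for the upper bound and for a point $Z_0$ where $\bkzz[Z_0]\gg k^{9/2}$, subtract the SK contribution, and then pigeonhole over $\dim S^\sharp_k \asymp k^3$ to extract $F_2$. The paper carries this out exactly the same way, so structurally you are on target.

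The one substantive discrepancy is the source of the SK sup-norm bound. You claim that \cite{blomer2015size} together with ``convexity estimates'' for $L(1/2, f\otimes\chi_D)$ yields an unconditional exponent $\beta<7/4$. The paper explicitly disagrees with this route: it states that the method of \cite{blomer2015size} (and \cite{das-rms-supnorms}) relies on the hybrid subconvexity of \cite{young}, which at present is proved only for \emph{odd} fundamental discriminants, so those arguments are not fully unconditional. Instead the paper invokes the recent work \cite{das-anamby}, which establishes directly and unconditionally $\sup_Z \bkzz^*\ll_\epsilon k^{5/2+\epsilon}$ (equivalently $\oldnorm{F}_\infty\ll k^{5/4+\epsilon}$ for SK lifts), whence $\bkzz^{\mrm{SK}}\ll k^{7/2+\epsilon}$. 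Your assertion that mere convexity suffices is not substantiated---you neither specify which convexity bound you mean nor track how the resulting power of $|D|$ interacts with the sum over discriminants in Blomer's argument---and the paper's own discussion indicates this is precisely the delicate point. So either justify the convexity claim carefully, or (as the paper does) cite \cite{das-anamby} for the needed input.
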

The upper bound for $\bkzz^\sharp$ is trivial. The lower bound follows from \eqref{supbds} by noting that the contribution from the Saito-Kurokawa (SK) lifts (consisting of Hecke eigenforms) is at most $k^{7/2 +\epsilon}$ as the sup-norm of such a lift can be bounded by $k^{5/4+\epsilon}$ 
For SK lifts ($k$ even), let us put $\bkzz^*$ for the Bergman kernel of that space.
We mention here that in the recent work \cite{das-anamby}
an \textsl{unconditional} bound $k^{5/2} \ll \sup_Z \bkzz^* \ll_\epsilon k^{5/2+\epsilon}$ has been established, from which follows the bound 
$\oldnorm{F}_\infty \ll k^{5/4 +\epsilon}$ for $F$ a SK lift.
Therefore we call this the 'trivial bound' for SK lifts of weight $k$. Previously there was no such unconditional result, as e.g. the works
\cite{das-rms-supnorms}, or (the result obtained by the method of) \cite{blomer2015size} all use the main result from \cite{young}, which at present holds only for all \textsl{odd} fundamental discriminants. Anyhow, the upper bound for $\bkzz^\sharp$ follows.
As far as we are aware this is the first time in the literature that the existence of a non-SK lift satisfying the lower bound in \conjectureRef{bloconj} is known.

Our next corollary concerns the spinor zeta function $L(s,F)$ of $F$. Let $F \in S^2_k$ be an $L^2$-normalised Hecke eigenform. By B\"ocherer's conjecture \cite{boechconj}, proved by Furusawa, and Morimoto \cite{fumo} (see also the discussion in \cite{blomer2019spectral})
one has the relation
\begin{align} \label{af12}
     a_F(1_2)^2 = \frac{ 256 \pi^{11/2} (4 \pi )^{2k-3} \Gamma(2k-4)} {\Gamma(k-3/2) \Gamma(k-2) \Gamma(2k-1) } 
     \frac{ L(1/2,F)L(1/2, F \otimes \chi_{-4}) }{L(1, \mrm{Ad}F)};
\end{align}
where $1_2 = \smat{1}{0}{0}{1}$ and $L(s, F \otimes \chi_{-4})$ denotes the spinor zeta function twisted by the unique quadratic character $\bmod{\,4}$. Asymptotics of spectrally weighted averages of related family of $L$-functions were obtained in \cite{blomer2019spectral}.
Our main theorem implies the following sharp result about Lindel\"of on average for the family of the products of the central $L$-values $L(1/2,F)L(1/2, F \otimes \chi_{-4})$. Let $L(s, \mrm{Ad}F)$ denote the degree~$10$ adjoint $L$-function of $F$. Let $H(2,k)$ be the set of $L^2$-normalised Hecke eigenforms in $S^2_k$. We appeal to \eqref{af12} the bounds on $L(1, \mrm{Ad}F)$ from \cite{xiannanli} to arrive at the following.

\begin{cor} \label{lindelofcor}
\begin{align}
k^3 \ll \sum \nolimits_{F \in H(2,k)} \frac{ L(1/2,F)L(1/2, F \otimes \chi_{-4}) }{L(1, \mrm{Ad}F)} \ll_\epsilon k^{3 +\epsilon} \label{lincor1} \\
k^{3-\epsilon}  \ll \sum \nolimits_{F \in H(2,k)}  L(1/2,F)L(1/2, F \otimes \chi_{-4}) \ll_\epsilon k^{3 +\epsilon}. \label{lincor2}
\end{align}
\end{cor}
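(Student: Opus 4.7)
The strategy is to convert the Bergman kernel bounds of Theorem~\ref{mainthm} (for $n=2$) into estimates for $\sum_F |a_F(1_2)|^2$ via Parseval, and then invoke B\"ocherer's formula \eqref{af12} to translate into bounds for the spectral sum of $L$-values. The prefactor in \eqref{af12} behaves, by Stirling, as $C \cdot k^{3/2}(4\pi e/k)^{2k}$ for some positive constant $C$, which precisely calibrates the comparison.

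For the upper bounds, I would first observe that for any $Y > 0$, Parseval's identity along the unipotent orbit gives
\begin{equation*}
\int_{[0,1]^3} |F(X+iY)|^2 \det(Y)^k\, dX \;=\; \det(Y)^k \sum\nolimits_T |a_F(T)|^2\, e^{-4\pi\tr(TY)}.
\end{equation*}
Dropping all $T \neq 1_2$, summing over $F\in H(2,k)$, and using $\mathbb{B}_k(Z,Z)\leq \sup_Z \mathbb{B}_k$ yields
$\sum_F |a_F(1_2)|^2 \leq e^{4\pi\tr(Y)}\det(Y)^{-k}\sup_Z \mathbb{B}_k(Z,Z)$. Choosing $Y = (k/4\pi)\cdot 1_2$ (which minimizes the right side) and using Theorem~\ref{mainthm} for $n=2$ produces $\sum_F |a_F(1_2)|^2 \ll k^{9/2+\epsilon}(4\pi e/k)^{2k}$. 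Dividing by the Stirling-evaluated prefactor yields the upper bound in \eqref{lincor1}; then the bound $L(1,\mathrm{Ad}F)\ll k^\epsilon$ from \cite{xiannanli} gives \eqref{lincor2}.

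For the lower bounds, which form the main technical point, I would use the lower bound in Theorem~\ref{mainthm}. At $Z_0=iY_0$ with $Y_0=(k/4\pi)\cdot 1_2$, the identity term in \eqref{bergdef0} contributes $\asymp a(2,k) \asymp k^{9/2}$ to $\mathbb{B}_k$; the non-identity terms are negligible because $\det(Y_0) \to \infty$ with $k$. Crucially this bound holds uniformly in $X \in [0,1]^3$ since the identity contribution is $X$-independent. Integrating over $X$ and applying Parseval produces
\begin{equation*}
\det(Y_0)^k \sum\nolimits_F \sum\nolimits_T |a_F(T)|^2 e^{-4\pi\tr(TY_0)} \;\gg\; k^{9/2}.
\end{equation*}
The heart of the argument is to show that the $T=1_2$ term dominates this sum. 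For each PD half-integral $T$, the Parseval bound (re-optimized as above with $Y=(k/4\pi)T^{-1}$) yields $\sum_F |a_F(T)|^2 \ll k^{9/2+\epsilon}\det(T)^k(4\pi e/k)^{2k}$, reducing matters to controlling $\sum_{T\neq 1_2}(\det T\, e^{-\tr T})^k$. The AM-GM inequality $\det T \leq (\tr T/2)^2$ combined with the maximum of $(t/2)^2 e^{-t}$ at $t=2$ shows $\det T\, e^{-\tr T}\leq e^{-2}$, with equality exactly at $T=1_2$; every other PD half-integral $T$ with integer diagonal gives a strictly smaller value, hence the off-diagonal contribution is exponentially smaller and can be discarded. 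This isolates $\sum_F |a_F(1_2)|^2 \gg k^{9/2}(4\pi e/k)^{2k}$, and Stirling on the prefactor in \eqref{af12} yields the lower bound in \eqref{lincor1}. The lower bound in \eqref{lincor2} then follows from $L(1,\mathrm{Ad}F)\gg k^{-\epsilon}$.

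The main obstacle is precisely the extraction step, verifying that the $T=1_2$ contribution genuinely dominates the unipotent expansion at the chosen $Y_0$. This requires simultaneously the per-$T$ upper bound (from Theorem~\ref{mainthm}) and a sharp combinatorial estimate showing that $T=1_2$ uniquely saturates $\det T\, e^{-\tr T} = e^{-2}$ among admissible $T$, with a definite gap to all competitors.
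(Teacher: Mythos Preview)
Your upper bound argument is correct and coincides with the paper's: Parseval at $Y_0=(k/4\pi)1_2$, dropping all $T\neq 1_2$, and invoking the upper bound in Theorem~\ref{mainthm} gives $\sum_F|a_F(1_2)|^2\ll k^{9/2+\epsilon}(4\pi e/k)^{2k}$, whence \eqref{lincor1} and then \eqref{lincor2} via the bounds of \cite{xiannanli}.

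The lower bound argument, however, has a genuine gap. You assert that at $Z_0=iY_0$ the identity term in the $\gamma$-expansion \eqref{bergdef0} contributes $\asymp a(2,k)\asymp k^{9/2}$. This is incorrect: from \eqref{ank} one has $a(2,k)\asymp k^{n(n+1)/2}=k^3$, not $k^{9/2}$. Consequently the identity term alone cannot account for the size $k^{9/2}$, and your claim that the non-identity terms are negligible because $\det(Y_0)\to\infty$ fails; in fact the bulk of $\mathbb B_k$ at such a high point must come from \emph{many} non-identity $\gamma$. More fundamentally, the lower bound in Theorem~\ref{mainthm} is only for $\sup_Z\mathbb B_k$, not for $\mathbb B_k(X+iY_0)$ uniformly in $X$ nor for its $X$-average, so you cannot simply integrate it over $X$. (And the paper's proof of that lower bound already passes through $\sum_F|a_F(1_2)|^2$ via Petersson, so invoking it would be circular anyway.)

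The paper's route for the lower bound is much more direct and avoids this obstacle entirely: by the Petersson formula \eqref{peter} one has $\sum_F|a_F(1_2)|^2=c_{2,k}^{-1}a_{P_{1_2}}(1_2)$, and by \cite{kowalski2011note} (see \eqref{pnonv}) one knows $a_{P_{1_2}}(1_2)\gg 1$ for $k$ large. Stirling on $c_{2,k}^{-1}$ then gives $\sum_F|a_F(1_2)|^2\gg k^{9/2}(4\pi e/k)^{2k}$ immediately, and B\"ocherer's formula \eqref{af12} converts this into the lower bound of \eqref{lincor1}. Your extraction step (showing $T=1_2$ dominates among all half-integral $T$ via $\det T\,e^{-\tr T}\le e^{-2}$) is correct and a nice observation, but it is unnecessary once one has the Petersson input.
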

The corollary above also follows from \cite[(1.8)]{blomer2019spectral}, but we want to highlight the use of the methods of this paper to obtain the such results. In particular, 
on \conjectureRef{bloconj} for $F$, we can show that one would get the Lindel\"of conjecture for $L(1/2,F)L(1/2, F \otimes \chi_{-4})$ by using \eqref{af12} -- see subsection~\ref{lincorpf} for this and the proof of the above corollary.

\subsection{Remarks and discussions about proofs} When $n=1$, \conjectureRef{bergconj} is known from the works \cite{kramer1, kramer3} by the heat-kernel method (also see \cite{anil} for the case of Hilbert modular forms), but it appears with $O_\epsilon(k^{3/2-\epsilon})$ as the lower bound. It also follows from \eqref{xia} by summing over a basis, with $k^{\pm \epsilon}$ defects in the upper and lower bound respectively. 
When $n=1$, our lower bound in \eqref{supbds} gives $k^{3/2}$, and in subsection~\ref{n=1exam}, we show how to obtain the upper bound in \eqref{bergconjeq} without any recourse to the techniques using the heat-kernel (as in \cite{kramer1}) or using the Hoffstein-Lockhart bound for the upper and lower bounds for the Fourier coefficient $a_f(1)$ of an $L^2$ normalised Hecke eigenform of level one (as in \cite{xia2007norms}). For the upper bound in \eqref{bergconjeq} we just use a uniform (with respect to $k$) Hecke-bound obtained by using trivial bounds for Kloosterman sums and Bessel functions; the purpose is to demonstrate that with such minimal hypotheses, the proof might generalise to higher degrees.

To prove \thmref{mainthm}, as is well-known, we will use two kinds of estimates:
one from the Fourier expansion of $F$, and another from the analysis of the Bergman kernel for $S^n_k$.
However, as mentioned above, due to the intractability of the Fourier expansion,
we essentially use it only to obtain the cut-off for the region
(roughly it is of the form $\{ Z \in \fn \mid \det(Y) \gg k^{n}\}$, with $\fn$ being the Siegel fundamental domain for $\Sp \z$)
in which the quantity $\bkzz$ decays exponentially (see \thmref{f-febd}). When $n=2$, however, the Fourier expansion can be handled somewhat better, see section~\ref{n=2case}.
In the complementary region, our primary tool would be the Bergman kernel, defined above. It will be clear that the Bergman kernel occupies a major portion of this paper.
In the following, we will briefly discuss the main points pertaining to the techniques leading to the each kind of estimates discussed above. 

We start with some highlights on the analysis of the Fourier expansion.
Suppose that we have an average bound for the Fourier coefficients of $F \in \bnk $ in the following form:
  \begin{equation} \label{fc1}
p_{n,k}(T):=  \summ_{F \in \bnk} |a_F(T)|^2  \ll_n  \big(
  \frac{ ( 4\pi)^{nk/2} k^{\alpha} \det(T)^{ k/2-\beta} }
  {\Gamma(k)^{n/2}   }  \big)^{2},
  \end{equation}
where $\alpha, \beta$ are real parameters. 
We comment here that it is in principle possible (at least when $k>2n$) to obtain values for $\alpha,\beta$
in terms of $n$ by estimating the Fourier coefficient $a_{P_T}(T)$ (expressed in terms of Bessel functions and Kloosterman sums of matrix index)
of the $T$-th Siegel Poincar\'e series $P_T:=P^n_{k,T}  \in S^n_k$,  from the Petersson trace formula
\begin{equation} \label{peter}
\summ_{G \in \mathcal B^n_k} |a_G(T)|^2 = c_{n,k}^{-1} \det(T)^{k-\frac{n+1}{2}} a_{P_T}(T) 
\end{equation}
with the constant 
\begin{equation} \label{cnk}
 c_{n,k} = \pi^{n(n-1)/2} (4 \pi)^{n(n+1)/4 - nk} \prodd_{j=1}^n \Gamma(k - (n+j)/2) .
\end{equation}
Unfortunately, very little is known about $\beta$ (which should depend only on $n$) -- we do not even know
if we can take $\beta \ge 0$ \textsl{with respect to this specific choice of dependence on $k$, this is crucial for us.} 
So we take a different route and first prove \lemref{beta0} towards this, in order to set the ball rolling. At present we do not know of any other means to obtain a result like \lemref{beta0}.
This is done using bounds for the Bergman kernel. However this only shows that $\beta \ge -\gamma_n $ for some quadratic polynomial $\gamma_n >0$ in $n$. However, we emphasise here that such a weak lower bound for $\beta$  \textsl{is sufficient} for the problem at hand, in the sense that it leads to the same bound as one would have obtained if we knew that $\beta = 0$ via the method of \lemref{beta0}, perhaps at the cost of $k$ being larger. In fact, we can show $\beta=0$ with the knowledge of \thmref{mainthm} (cf. \rmkref{overcount}~(2)). Any better result by this method is contingent on a value of $\beta$ being strictly positive. Moreover, as far as we could see, it seems that the only way to get $\beta > 0$ in our situation is to estimate directly $a_{P_T}(T)$, something which seems not possible at the moment due to insufficient knowledge of the matrix argument Bessel functions and Kloosterman sums. See \rmkref{expected-fcbd} for more on this matter.

Ignoring the dependence on $F$, one can indeed find a $\beta >0$ in \eqref{fc1},
as can be seen from a variety of ways, for instance by considering the Rankin-Selberg series
with respect to the Fourier coefficients, or by using Jacobi Poincar\'e series (see e.g. \cite{boecherer1993estimates},
\cite{boech-ragha}). However, finding a $\beta >0$ which does not go to
$0$ as $n \to \infty$ (even ignoring the dependence on $F$) is a well-known long-standing
open problem (see \cite{boecherer1993estimates} for the best-known result to date). 

The other (i.e., apart from using \eqref{peter}) known methods for estimation of Fourier coefficients of Siegel cusp forms seem not suitable for us, as the implied constants in these methods cannot be controlled satisfactorily. 
When one looks at the Fourier coefficients on average, one can work with them. Thus
our artefact for working with the Fourier series is the quantity
$p_{n,k}(T)$ (cf. \eqref{fc1}) for which \lemref{beta0} provides a bound. The idea then is to link this information with the Fourier expansions of the constituents $F$ of $\bnk$. We do this in \lemref{pt} by employing the Cauchy-Schwartz inequality judiciously and reduce our attention to a smooth function
$q_k(Y)$ by showing that $\bkzz \le q_k(Y)^2$; where
$q_k(Y)=q_{n,k}(Y) := \sum_{T \in \Lambda_n} p(T)^{1/2} \, \det(Y)^{k/2} \, \exp(- 2 \pi \tr(TY))$.

The rest of the work on Fourier series is then to bound $q_k(Y)$, and this is the content of \propref{p-febd}, which is at the heart of the matters. 
For the pair $(\alpha,\beta)$ as in \eqref{fc1}, the Fourier expansion of $F$ yields for all $Z \in \fn$
\begin{align} \label{fc0}
  q_k(Y) \ll_n
        \big(k^{n} \det(Y)^{-1}\big)^{(n+1)/4 -\beta}  \cdot k^{\frac{n}{4}+\alpha+\epsilon};
\end{align}
We show that only finitely many $T$'s contribute significantly to the size of $q_k(Y)$, and estimate the size of this set in \lemref{count-bd2},
from which \eqref{fc0} follows.
We also have a version of \eqref{fc0} in \propref{p-febd} where we only assume that $Y$ is Minkowski-reduced.

We would only use \eqref{fc0} when $\det(Y) \gg k^n$ or more generally when $y_n \gg k$ with $y_n$ being the largest diagonal element of $Y$. In such a region, one gets the upper bound in \thmref{mainthm} by appropriate values of $(\alpha, \beta)$.
By a careful choice of constant $\mf c_n$, one can also prove that in a region $\det(Y) \ge \mf c_n k^n$, $q_k(Y)$ decays exponentially, see \lemref{expdecay-cutoff}.
\eqref{fc0} can be improved as soon as we get better values of the pair $(\alpha, \beta)$ in \eqref{fc1}, this is discussed in \rmkref{expected-fcbd}.

Moreover note that by \cite[Hilfssatz~3.11]{freitag1983siegel}, for any $F \in \snk$, the quantity $\det(Y)^{k/2}|F(Z)|$ attains its supremum at a point in $\fn$. Our results (cf. \propref{f-febd}) suggest that this is attained `high in the cusp', at least in the average.
This can be compared with the work of Brumley-Templier \cite{brutemp}. From \thmref{mainthm}, we see that the supremum of $\bkzz$ grows as at least $k^{3n(n+1)/4}$, \thmref{f-febd} and the second bound in \thmref{bkbd} show that such a value can only be attained in the following region:
\begin{equation} \label{localize}
    \bkzz[Z_0]=\sup \nolimits_{Z\in\fn} \bkzz \implies k^{n/3} \ll_n \det(\Im({Z_0})) \ll_n k^n
\end{equation}
This is also supported by the results of section~\ref{ampl}.

When $n=2$ (discussed in sections~\ref{n2-fouriercoeffs},~\ref{n=2case}), we have a somewhat better handle on the Fourier expansion. In this case instead of \lemref{beta0}, we can rely on the Fourier coefficients of Poincar\'e series directly to bound $p_{2,k}(T)=\sum_F |a_F(T)|^2$; these have been worked out in a remarkable paper of Kitaoka \cite{kitaoka1984fourier}.

Lastly, the bound $\sup_{Z \in \mc H_2} \bkzz \ll_\epsilon k^{9/2+\epsilon}$ deserves special mention. The reason behind this better bound is that on the one hand we adapt, and estimate $\#\cy$ (the finite set $\cy$ being the effective support of the Fourier expansion) more precisely using the results in \cite{blomer2015size} stemming from the fact that when $n=2,$ one can express the eigenvalues of a matrix in terms of its entries. On the other hand, we crucially make use of the decay (with respect to $k$) of the `off-diagonal' terms of the expression for the $T$-th Fourier coefficient of the Poincar\'e series $P_T$. A key idea that we have used while bounding the $T$-th Fourier coefficient of Poincar\'e series $P_T$ is to keep two natural terms which arise in this course to be treated separately rather than merge them into a uniform single estimate. The resulting bound then looks like
\begin{equation}
    p_{2,k}(T) \ll_n c_{2, k}^{-1}\det(T)^{k}
    (k^{\alpha_1} \det(T)^{-\beta_1} + k^{\alpha_2} \det(T)^{-\beta_2})
\end{equation}
where $\alpha_j,\beta_j$ are certain explicit constants. This creates the window for obtaining robust bounds when interacted with the bound on $\#\cy$.
See \lemref{l-n2fourierbound}.
These, when combined with our Bergman kernel estimates (cf. \thmref{bkbd}), yields the desired bound after a delicate analysis, all of these can be found in section~\ref{n=2case} yielding \thmref{9/4}.

Let us now highlight some of the notable points in the Bergman kernel analysis. First of all let us mention that here the situation is somewhat better than that with the Fourier expansion.

We break the sum in \eqref{bergdef0} into three pieces by writing any element $\gamma\in\Gamma_n$ as
$\gamma=\smat{1_n}{S}{0}{1_n}\cdot\allowbreak\smat{U}{0}{0}{(\tp{U})^{-1}}\cdot\allowbreak\smat{*}{*}{C}{D}$, where the notation is explained below.
The first sum is over $\Sym \z$ which is handled by the Lipschitz formula. Here we present a new method of bounding the sum $\summ_{S\in\Sym{\z}} \det(Z+S)^{-k}$
through its Fourier expansion (see \lemref{lipfo}):
\begin{equation}
    \sum_{S\in\Sym{\z}} \det(Z+S)^{-k} \ll_n  \begin{cases}
    k^{\frac{n(n+1)}{4}} \det(Y)^{-k} \q \q &(Z \in \hn),\\
    \det(Y)^{-k +\frac{n+1}{4}} \q \q &(Z \in \fn).
  \end{cases}
\end{equation}
The second sum $\sum_{U}\det(Y+U\ty \tp{U})^{-k}$ over $\GL \z$, is quite subtle (see \lemref{headache}), and the third one is over the set of all in-equivalent coprime symmetric pairs $\{C,D\}$ (see section~\ref{bksec1}). Coincidentally the bounds ensuing from the first and second summations are just enough to handle the third, which is a majorant of the classical Siegel Eisenstein series: $\sum_{\{C,D\}} |\det(CZ+D)|^{-\rho}$ for suitable $\rho>0$, one also has to check that the bound is independent of $Y$. The results are summarised in \thmref{bkbd}. We obtain the following bounds towards the Bergman kernel for all $Z \in \mc F_n$:
\begin{equation} \label{berg1}
\bkzz=B_k(Z,Z) \det(Y)^k \ll_{n, \epsilon}
k^{\frac{n(n+1)}{2}}(\det(Y)/y_1^n)^{\frac{n+1}{2}+\epsilon}
\min \left\{ k^{ \frac{n(n+1)}{4} } , \,\det(Y)^{\frac{n+1}{4}} \right\}.
\end{equation}
Here $y_1$ is the smallest diagonal entry of $Y$. Expression \eqref{berg1} could perhaps be improved if we do not forgo cancellations as much as we did, but it is a non-trivial prospect. However, a promising approach worth investigating seems to be the generalisation of section~\ref{n=1exam} revolving around the setting of amplification -- see the discussion there. Some calculations towards this effect can already be found in section~\ref{ampl} when we deal with amplification.
This approach, and the treatment of the level aspect of the same problem, will be the content of a forthcoming paper by the authors.

We end the paper by proving a bound for the sup-norm of a Hecke eigenform $F$ when it is restricted to a fixed compact subset, say $\Omega \subset \mc F_n$ which does not depend on $k$. Here the preliminary bound is $O_\Omega(k^{n(n+1)/2})$, which follows from the work of Cogdell-Luo \cite{cogdell2011bergman}. When $n=2$, we go beyond this bound by employing the amplification method: based on previous works by \cite{blomer2016supnorm},  \cite{cogdell2011bergman}, \cite{das2015supnorms} and \cite{iwaniec1995supnorms}. 
The construction of the amplifier follows that in \cite{blomer2016supnorm}, but proceeding further one needs to count the number of double cosets in the similitude Hecke operators $T_m$, see \lemref{coset-count}.
We show that the bound can be improved to $O_\Omega(k^{3/2 - \eta})$ for some absolute constant $\eta>0$. See subsection~\ref{ampl}, \thmref{amplithm}. Some of the calculations here might be useful in considering the same problem in the non-compact setting.

Lastly let us mention that for $a,b \ge 0$, on compact subsets of the form $\mc F_{a,b}= \mc F_{a,b;n}:= \{ Z \in \fn \mid k^a \ll \det(Y) \ll k^b \}$, $\bkzz$ can be bounded by $k^{(n+1)(2n+3b)/4 +\epsilon}$ when $b\le n-1$, and by $\expn{-k^{a/n}}$ when $a > n$.

It is very interesting to investigate this problem in the level aspect, work on this is under progress by the authors.

\subsection*{Acknowledgements}
We thank V. Blomer, G. Harcos for valuable comments on the paper.
S.D. thanks IISc. Bangalore, UGC Centre for Advanced Studies and DST India for financial support.
H.K. acknowledges the support by a scholarship ((2/39(2)/2016/NBHM/R\&D-II/11410)) from the National Board for Higher Mathematics.

\section{Notation and setting} \label{prelim}

In this paper, we will mostly use standard notation, some of which are collected below, and the rest will be introduced as and when necessary. For standard facts about Siegel modular forms, we refer the reader to \cite{freitag1983siegel}, \cite{klingen1990siegel}.

\begin{inparaenum}[(1)]

\item
We use the standard conventions in analytic number theory. We note $A \ll B$ and $A=O(B)$ are the Vinogradov and Landau notations respectively. By $A\asymp B$ we mean that there exists a constant $c\ge 1$ such that $B/c\le A\le cB$. Any subscripts under them (e.g. $A\ll_n B$) indicates the dependence of any implicit constants on those parameters. Throughout, $\epsilon$ will denote a small positive number which can vary from one line to another.

\item 
$\z, \Q, \R,$ and $\complex$ denote the integers,
rationals, reals and complex numbers respectively. For a commutative ring $R$ with unit, $M(n, R)$ denotes the set of $n \times n$ matrices over $R$. $\GL[n]{R}$ denotes the group of invertible elements in $M(n, R)$. We will denote the transpose of $A$ by $\tp{A}$. Moreover, $1_n$ and $0_n$ will be the identity and zero matrices respectively.
      
\item 
 $\Sym{R}$ is the set of symmetric matrices with entries in $R$. For a square matrix $A$, we denote by $A_D$ the diagonal matrix with the diagonal elements of $A$ on the diagonal. For a tuple $(a_1,a_2,\ldots,a_n) \in R^n$, we put $\dia(a_1,a_2,\ldots,a_n)$ to be the diagonal matrix with these elements on the diagonal.
 
 From now on $R \subset \R$. We denote by $\Sym[n]{R}^+$ to be the cone of positive-definite matrices over $R$.

\item 
$\mrm{GSp}^+(n,\R)\subseteq\GL[2n]{\R}$ is the symplectic similitude group over $\R$. That is, we put $\mrm{GSp}^+(n,\R) = \left\{ M=\psmb A& B \\ C & D \psme \in \GL[2n]{\R} \mid \tp{M} \psmb 0_n & -1_n \\ 1_n & 0_n \psme M = \mu(M) \psmb 0_n & -1_n \\ 1_n & 0_n \psme \right\}$, $\mu(M)>0$. Then $\Sp[n]{R}$ is the subgroup for which $\mu(M)=1$ for all $M$. In particular, $\Gamma_n=\Sp{\z}$ is the Siegel modular group. Throughout the paper, we would use the above block representation for $M \in \Sp[n] {R}$; with $A,B,C,D$ having size $n$. Equivalently $M$ satisfies the relations $A \tp{D} - B \tp{C}=1_n,\, A\tp{B}=B\tp{A}, \, C \tp{D}=D\tp{C}$.
      
\item 
$\halfSpace_n:=\{X+iY\in\Sym{\complex}\mid Y>0\}$ is the Siegel half-space. $M \in \Sp \R$ acts on $\hn$ by $Z \mapsto (AZ+B)(CZ+D)^{-1}$. Moreover, for $M \in \mrm{GSp}^+(n, \R)$ and $F$ holomorphic on $\hn$ we put $F|_k M := \det(CZ+D)^{-k} F(M\langle Z \rangle)$ where $\det(M)^{-1/2} M = \psmb * & * \\ C & D \psme$. When $F$ depends on more than one variable like in $F(Z, W)$, we will use $|_k^{(W)}$ to indicate that the action is to performed for the $W$ variable.

We denote by $Y=\Im(Z) >0$ to denote the imaginary part of $Z \in \hn$, and often use the formula $\Im(M \lan Z \ran)= {\tp{(CZ+D)}}^{-1}Y\overline{(CZ+D)}^{-1}$.

\item 
$S_k^n$ is the space of Siegel cusp forms of weight $k$ on $\Gamma_n$, i.e., those which are holomorphic and satisfy $f |_k \gamma = F$ for all $\gamma \in \Gamma_n$, with the usual boundedness condition when $n=1$.
We will use $\basis$ to denote an orthonormal basis for this with $F_{k, j}$ as its elements.
      
\item 
$\Lambda_n:=\{T\in\Sym{\Q}\mid T>0; 2T=(2t_{ij}) \in\Sym{\z}; t_{ii} \in\z\}$. Throughout the paper, for two symmetric matrices $A,B$ we write $A>B$ to denote $A-B$ is positive definite.
Note that $\GL \z$ acts on $\Lambda_n$ from the left by $T \mapsto U T\tp{U}$.
      
\item 
The Fourier expansion of $F\in S_k^n$ is written as
$F(Z)=\sum_{T\in \Lambda_n} a_F(T) e(\tra(TZ))$
and $a_F(T) \in \complex$ are its Fourier coefficients. Further $e(\tra(A))=e^{2\pi i\tra(A)}.$

\item 
$\fn$ will denote the Siegel fundamental domain of
$\halfSpace_n$ under the action of $\Gamma_n$.
If $Z=X+iY\in\fn$, then the coordinates of $X=(x_{ij})$ satisfy
$-\frac{1}{2}\leq x_{ij} \leq \frac{1}{2}$; moreover, $Y\in R_n$ the Minkowski-reduced domain,
and $|\det(CZ+D)|\geq 1$ for all $\{C, D\}$ being lower blocks of elements in $\Gamma_n$. Note that there exist constants $r_n\ge 1$ such that for a Minkowski-reduced matrix $A$ of size $n$, we have (cf.\cite[p. 20 Lemma 2]{klingen1990siegel})
\begin{equation} \label{reduct-implication}
A_D/r_n \le A \le r_n A_D.
\end{equation}
Moreover if $Z \in \fn$, then $Y_D \gg_n 1_n$ (cf.\cite[p. 30 Lemma 2]{klingen1990siegel}).

For $F,G \in S^n_k$ we define the Petersson inner product $\langle F, G \rangle$ of $F$ and $G$ by 
\[ \lan F, G \ran = \int_{\fn} F(Z)\overline{G(Z)} \det(Y)^k \frac{ dX \, dY}{\det(Y)^{n+1}}.\]

\item
For each $T \in \Lambda_n$ we define the $T$-th Poincar\'e series $P^n_{k,T}$ by the infinite series $P^n_{k,T} = \sum_{\gamma \in \Gamma_\infty \backslash \Gamma_n}\allowbreak e(\tra(TZ)) |_k \gamma$ for $k>n+1$; where $\Gamma_\infty = \{ \psmb 1_n & S \\ 0_n & 1_n \psme \mid S \in \Sym \z \}$ is the subgroup of $\Gamma_n$ consisting of translations, and $\Gamma_{0,n}$ denotes the Siegel parabolic subgroup defined by $\{\gamma \in \Gamma_n \mid C=0 \}$. The most important property of $P_T$ is that it is dual to the Fourier coefficients $A_F(T)$ for all $F \in S^n_k$, and this results in the very useful Petersson's formula enunciated in \eqref{peter}.

\item
If $\gamma=\smat{*}{*}{C}{D} \in \Gamma_n$, then $\{C,D\}$ is called a coprime symmetric pair. Conversely, such a pair satisfies $C\tp{D}=D\tp{C}$ and can be completed to an element of $\Gamma_n$. Moreover, $\GL \z$ acts on such pairs from the left by left multiplication -- the representatives of the distinct equivalence classes are called `in-equivalent'.

\end{inparaenum}
\raggedbottom
\section{Proof of \thmref{mainthm} and \corref{lindelofcor}} \label{mainthmproof}
In this section we prove \thmref{mainthm} by invoking suitable results from later parts of the paper. We feel this allows for an easier presentation of the proof.

\subsection{Lower bound for the Bergman kernel} \label{bklbd}
We first prove the lower bounds in \thmref{mainthm}.
Note that the lower bound for claimed for $\bkzz$ is uniformly $O(k^{3n(n+1)/4})$ for all $n \ge 1$.
We start with the observation that for any fixed $Y_0>0$,
\begin{align}
\sup_{Z' \in \fn} \sum_{F \in \bnk} \det(Y')^{k} \abs{F(Z')}^2 
& \ge \int_{X \bmod 1} \, \sum_{F \in \bnk} \det(Y_0)^{k} \abs{F(X+iY_0)}^2 \, dX \n\\
& \ge \det(Y_0)^{k} \sum_{F \in \bnk} \, \abs{ \int_{X \bmod 1} \frac{F(X+iY_0)}{e(\tr(X+iY_0))}\, dX }^2 \expn{-4\pi \tr(Y_0)}\n \\
& = \det(Y_0)^{k} \expn{\tr(-4 \pi Y_0)} \sum_{F \in \bnk} \, |a_F(1_n)|^2 \label{lbdp1}
.\end{align}
As a consequence of the Petersson formula (see \eqref{peter}, \eqref{cnk}), we know that
\begin{align} \label{lbdp2}
\sum_{F \in \bnk} \, |a_F(1_n)|^2 = c_{n,k}^{-1} \, a_{1_n}(1_n), \q\q \q \q (c_{n,k} \text{ as in } \eqref{cnk})
\end{align}
where $a_{1_n}(1_n)$ denotes the $1_n$-th Fourier coefficient of the Poincar\'e series $P^n_{k, 1_n}$. From \cite[Thm.~4]{kowalski2011note} we see that for all even $k \ge k_0$ with $k_0$ depending only on $n$,  
\begin{equation} \label{pnonv}
a_{1_n}(1_n) \ge \frac{\# \mrm{Aut}(1_n)}{4}, 
\end{equation}
where $\mrm{Aut}(1_n) = \# \{ U \in \GL \z \mid \tp{U}U=1_n \} \gg 1$. Of course, \eqref{pnonv} is related to the difficult problem of non-vanishing of Poincar\'e series -- see \cite{das2012nonvanishingsiegelpoincare} for the case when $T$ varies and $k$ is fixed but large.

Putting together \eqref{lbdp1}, \eqref{lbdp2} and \eqref{pnonv}, and taking $Y_0:= k/(4 \pi )\cdot 1_n$ we see that 
\begin{align}
\sup_{Z' \in \fn} \sum_{F \in \bnk} \det(Y')^{k} |F(Z')|^2 &\gg \left( \frac{k }{4 \pi e} \right)^{nk}  c_{n,k}^{-1} 
 \gg_{n}  \frac{\Gamma(k)^n k^{n/2}}{\prod_{\nu=1}^n \Gamma(k - (n+\nu)/2)} \dn\\
 & \gg_{n} k^{n/2 + (3n^2 + n)/4} = k^{3n(n+1)/4}. \label{bklbdnew}
\end{align}
In the above, $k_0$ may not be effective, as we rely on the result from \cite{kowalski2011note}. However, when $n=2$, we can use $k_0=6$ from \cite[Proposition 3.3]{kowalski2012local} or from \cite[(1.8)]{blomer2019spectral}.
This proves the lower bound for all $n \ge 1$.

\subsection{Upper bounds for the Bergman kernel}
For the upper bound of $\bkzz$ in \thmref{mainthm} when $n=2$, we refer the reader to \thmref{9/4}.

For the cases $n\ge 3$, the upper bound follows by combining \thmref{f-febd} and \thmref{bkbd}. We invoke \thmref{f-febd} in the region $\{Z \in \fn \mid y_n \ge nk\,r_n/(2\pi) \}$ ($r_n$ as in \eqref{reduct-implication}) to get exponential decay. In the complementary region of $\fn$ all diagonal entries are $O_n(k)$. Here, we invoke the second bound in \thmref{bkbd} viz. $ \bkzz \ll_n k^{n(n+1)/2}\det(Y)^{3(n+1)/4+\epsilon}y_1^{-n(n+1)/2-n\epsilon}$. We note that this quantity takes the maximum value when the smallest diagonal entry (or essentially eigenvalue by reduction criteria) is small i.e. $O(1)$ and all other diagonal entries are equal to $O_n(k)$. So the maximum for the bound is attained when $\det(Y) = O_n(k^{n-1})$ giving us $\bkzz \ll_{n, \epsilon}k^{(5n-3)(n+1)/4+\epsilon}$.
We refer the reader to section~\ref{n=1exam} for the proof of the upper bound when $n=1$. \QEDB

\subsection{Proof of \corref{lindelofcor}} \label{lincorpf}
From \eqref{lbdp1} and \eqref{af12}, we can write after a rearrangement that
\begin{align}
 \frac{ (4 \pi )^{2k} } {k^3 \, \Gamma(k-3/2) \Gamma(k-2)  } \,
    \sum_{F \in H(2,k)} \frac{ L(1/2,F)L(1/2, F \otimes \chi_{-4}) }{L(1, \mrm{Ad}F)} = \sum_{F \in H(2,k)} a_F(1_2)^2 \\
     \ll  \det(Y_0)^{-k} \exp(\tr (4 \pi Y_0)) \sup_{Z \in \mc H_2} \bkzz.
\end{align}
We again choose $Y_0 = (k/4 \pi) 1_2$ and use Stirling's approximation to get
\begin{align}
 \sum_{F \in H(2,k)}   \frac{ L(1/2,F)L(1/2, F \otimes \chi_{-4}) }{k^3 \, L(1, \mrm{Ad}F)} \ll_\epsilon  k^{2k-9/2} \cdot k^{-2k} \cdot k^{9/2+\epsilon}; \label{lindavg}
\end{align}
from which \eqref{lincor1} in \corref{lindelofcor} easily follows. The lower bound follows from the lower bound of $\sum_{F \in H(2,k)} a_F(1_2)^2 $ from \eqref{pnonv} and the same calculations as shown above. We leave the details to the reader.

The second inequality \eqref{lincor2} then follows from \eqref{lincor1} by using the following bounds for $L(1, \mrm{Ad}F)$ (cf. \cite{xiannanli}):  $k^{-\epsilon} \ll_\epsilon L(1, \mrm{Ad}F) \ll_\epsilon k^\epsilon $.

The convexity bound for the product $L(1/2,F)L(1/2, F \otimes \chi_{-4})$ is seen to be $O(k)$. So \corref{lindelofcor} gives worse than convexity bound for individual terms (by non-negativity of the $L$-values). However if one used the obvious analogue of \eqref{lbdp1} for a single $F$, and applied the conjectural bound on $\oldnorm{F}_\infty$, one would obtain the Lindel\"of conjecture for the product of the central $L$-values under consideration!
This is evident from \eqref{lindavg}, where now the sum over $F$ is replaced by a single $F$, and the last term on the r.h.s. of \eqref{lindavg} by $k^{3/2+\epsilon}$.

\section{Bounding the sup-norm using the Fourier Series}
In this section, we will obtain a bound on the size of a Siegel cusp form via its Fourier expansion by using certain bounds on its Fourier coefficients. The method rests on the observation that in the Fourier expansion of $F$, only certain finitely many $T$ contribute to its mass. 

Recall the explicit bound on the Fourier coefficients from \eqref{fc1} with a saving $\beta$ over the Hecke's bound. This saving cannot be completely arbitrary, as follows from below. Of course, one expects $\beta \ge 0$, but we could not find a proof.
As far as we are aware, the lemma below is not available in the literature if the uniformity with respect to $k$ is requested.

\begin{lem} \label{beta0}
Let $n \ge 2$ and put $\gamma_n:= (n+1)(2n-3) /4$.
Then with the above notation, \eqref{fc1} is satisfied by the Fourier coefficients of any Siegel cusp form of unit Petersson norm for the pair $(\alpha,\beta)$ such that 
\begin{equation}
     \alpha = (5n^2 +3n)/8; \q   \, \beta= - \gamma_n   
\q  \q (k \ge 2(n+1))
.\dn\end{equation}
\end{lem}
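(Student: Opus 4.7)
The plan is to extract the Fourier coefficient $a_F(T)$ via Fourier inversion in the $X$-variable, apply Parseval on the torus $\Sym{\R}/\Sym{\z}$, and then reduce the estimation of $p_{n,k}(T)$ to a supremum of the Bergman kernel. For any $Y > 0$ and $F \in \bnk$, one has
\[
a_F(T)\, e^{-2\pi\tr(TY)} \;=\; \int_{X \bmod \Sym{\z}} F(X+iY)\, e(-\tr(TX))\,dX,
\]
so Parseval on the (unit-volume) torus yields $\int_X |F(X+iY)|^2\,dX = \sum_{T \in \Lambda_n} |a_F(T)|^2 e^{-4\pi\tr(TY)}$. Summing over an orthonormal basis, dropping all but the $T$-th term on the right, and recognising $\sum_F |F|^2 \det(Y)^k$ as $\bkzz$ gives
\[
p_{n,k}(T)\, e^{-4\pi\tr(TY)} \;\le\; \det(Y)^{-k}\!\int_{X \bmod \Sym{\z}} \bkzz[X+iY]\,dX \;\le\; \det(Y)^{-k}\sup_{Z' \in \fn}\bkzz[Z'],
\]
the last inequality using $\Gamma_n$-invariance of $\bkzz$.

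Next I would optimise the prefactor $e^{4\pi\tr(TY)}\det(Y)^{-k}$ over $Y > 0$. Setting $Y = T^{-1/2} V T^{-1/2}$ with $V > 0$ reduces the problem to minimising $e^{4\pi\tr(V)}\det(V)^{-k}$, whose minimum occurs at $V = (k/4\pi)\,1_n$ (by AM--GM on the eigenvalues of $V$ combined with a one-variable optimisation); hence $Y_\ast = (k/4\pi) T^{-1}$ realises the minimum value $(4\pi e/k)^{nk}\det(T)^k$. Stirling's formula $\Gamma(k)^n \sim (k/e)^{nk}(2\pi/k)^{n/2}$ converts this into $(4\pi)^{nk}/\Gamma(k)^n \cdot k^{-n/2} \det(T)^k$ up to absolute constants. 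Feeding in a Bergman-kernel bound $\sup_{\fn}\bkzz \ll_{n,\epsilon} k^{5n(n+1)/4+\epsilon}$, which is the ``main-term'' estimate one extracts from the methods of \thmref{bkbd} before the $k^{-3(n+1)/4}$ refinement, yields
\[
p_{n,k}(T) \;\ll_{n,\epsilon}\; \frac{(4\pi)^{nk}}{\Gamma(k)^n}\, k^{(5n^2+3n)/4+\epsilon} \det(T)^k.
\]
Finally, since $T \in \Lambda_n$ with $T > 0$ forces $\det(2T) \ge 1$, i.e.\ $\det(T) \ge 2^{-n}$, one can freely insert the factor $\det(T)^{2\gamma_n}$ at the cost of an $n$-dependent constant; this recasts the bound in the form \eqref{fc1} with $\alpha = (5n^2+3n)/8$ and $\beta = -\gamma_n$.

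The hard part will be producing the clean uniform Bergman-kernel bound $\sup_{\fn}\bkzz \ll k^{5n(n+1)/4+\epsilon}$ to drop into the Parseval estimate. The pointwise bound in \thmref{bkbd} carries a factor $(\det(Y)/y_1^n)^{(n+1)/2+\epsilon}$ which depends on the eigenvalue-ratio shape of the reduced $Y$, not merely on $\det(Y)$; dealing with its interplay with the two regimes of $\min\{k^{n(n+1)/4}, \det(Y)^{(n+1)/4}\}$ needs careful bookkeeping on the Minkowski-reduction side. The loss $\gamma_n = (n+1)(2n-3)/4$ in $\beta$ reflects precisely what one gives up when the eigenvalue-ratio factor is absorbed into $\det(T)$ by AM--GM-type inequalities; this is consistent with the excerpt's remark that $\beta = 0$ can be recovered a posteriori once \thmref{mainthm} is established.
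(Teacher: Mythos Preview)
Your argument has a genuine circularity. You bound
\[
p_{n,k}(T)\,e^{-4\pi\tr(TY)} \le \det(Y)^{-k}\sup_{Z'\in\fn}\bkzz[Z'],
\]
and then you want to insert $\sup_{\fn}\bkzz \ll_{n,\epsilon} k^{5n(n+1)/4+\epsilon}$. But no such \emph{uniform} bound is available at this point in the argument. The estimate from \thmref{bkbd} is pointwise and carries the factor $\det(Y)^{(n+1)/2+\epsilon}y_1^{-n(n+1)/2}$ (or its variant), which is genuinely unbounded on $\fn$ as $\det(Y)\to\infty$. The only way the paper gets a uniform-in-$Y$ polynomial bound on $\sup_{\fn}\bkzz$ is by combining \thmref{bkbd} with the Fourier-series cutoff \thmref{f-febd}, and the latter \emph{uses} \lemref{beta0}; this is exactly what \rmkref{overcount}(2) flags (``We still need \lemref{beta0} to bootstrap''). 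So the ``main-term estimate one extracts from the methods of \thmref{bkbd}'' that you invoke does not exist in the form you need.

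The paper's proof breaks the circularity by never passing to $\sup_{\fn}\bkzz$. Instead it keeps the \emph{pointwise} bound $\bkzz[Z_0]\ll k^{3n(n+1)/4}\det(\Im(M\langle Z_0\rangle))^{(n+1)/2+\epsilon}$ at the specific $Z_0$ with $Y_0=(k/4\pi)T_0^{-1}$, and tracks the $Y$-dependence through the automorphy factor $|\det(CZ_0+D)|$ arising from the translation $M$ to $\fn$. Bounding $|\det(CZ_0+D)|$ from below requires Siegel's rank decomposition of $C$ and estimates of sub-determinants of $Y_0$ via $\Adj(T_0)$; the worst case $r=\rk C=n-1$ is what produces the loss $\det(T_0)^{(n+1)(n-3/2)}$, i.e.\ $\beta=-\gamma_n$. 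So the $\gamma_n$ is not a cosmetic insertion (as in your last step, padding $\det(T)^{k}$ to $\det(T)^{k+2\gamma_n}$ via $\det(T)\ge 2^{-n}$): it is the honest price of controlling the automorphy factor at $Y_0=(k/4\pi)T_0^{-1}$, which may sit very far from $\fn$.
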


\begin{rmk}
The sharp upper bound $\beta \le (n+1)/4 +\epsilon$ has been shown in \cite[Remark~5.3]{boecherer2014characterization}
  by using the Rankin-Selberg $L$-series $R_F(s) = \sum_{T \in \GL \z \backslash \Lambda_n} \frac{ |a_F(T)|^2 }{ \varepsilon(T) \det(T)^{s} }$ attached to $F$. Here $\varepsilon(T)$ denote the number of units of $T$.
  Namely if one has $a_F(T) = O_{k, \epsilon}(\det(T)^{k/2-\beta + \epsilon})$ for all $\epsilon >0$,
  then one must have $k-2 \beta +(n+1)/2 > k$ as $R_F(s)$
  has a pole at $s=k$. That the bound is sharp follows from \cite[Theorem~1.2]{das-niser-conf}.
\end{rmk}

\begin{proof}
  A value of $\beta$ could in principle be obtained by bounding the
  $T$-th Fourier coefficient of the Poincar\'e series. Unfortunately, this
  is apparently very hard -- as very little is known about 
  the underlying Bessel functions of order $\ge 3$.
  Instead, we take a different route and employ the Bergman kernel for this purpose.
  First note that
  \begin{align}
    \summ_{T\in\Lambda_n} |a_F(T)|^2 \expn{-4\pi\tr(TY)}
    = \int_{X \bmod 1} |F(Z)|^2 dX,
  \dn\end{align}
  and therefore, by summing over any orthonormal basis $\basis$ of $S^n_k$,
  we get
  \begin{align} \label{fbdbk}
    \summ_{F\in\basis} \, \summ_{T\in\Lambda_n} \det(Y)^k
    |a_F(T)|^2 \expn{-4\pi\tr(TY)}
    = \int_{X \bmod 1} \summ_F \det(Y)^k |F(Z)|^2 dX.
  \end{align}
  The integrand on the r.h.s. of \eqref{fbdbk} is the Bergman kernel,
  and in the notation of this paper (see \eqref{berg1}), equals $ \bkzz$.
  From \thmref{bkbd}, one of the bounds for $\bkzz$ reads $ \bkzz \ll_{n,\epsilon} k^{3n(n+1)/4} \det(Y)^{(n+1)/2 + \epsilon}$
  for all $Z \in \fn$. We will use this bound.

Let $Z_0=X_0+iY_0 \in \hn$, we can find an
 $M=\psmb A & B \\ C & D \psme \in \Gamma_n$
 such that $Z=M\langle Z_0 \rangle \in \fn$.  From the above mentioned bound for the Bergman kernel thus,
  \begin{align} \label{fest1}
  \bkzz[Z_0]= \bkzz
  \ll_n k^{3n(n+1)/4} \det(\Im (M\langle Z_0 \rangle))^{(n+1)/2+\epsilon}
  = \frac{k^{3n(n+1)/4} \, \det(Y_0)^{(n+1)/2+\epsilon}}{|\det(CZ_0+D)|^{n+1+\epsilon}}.
  \end{align}

  Let $\mrm{rank}(C)=r$. Then from Siegel \cite{siegel1935analytic} (see also \cite[Lemma~3.1]{das2015nonvanishingkoechermaass})
  we can  find $U,W \in \GL{\z}$ and $C_1, D_1 \in \m[r]{\z}$ such that
  \begin{align}
    (UC, UD) = \left(
    \begin{pmatrix} C_1 & 0 \\ 0 & 0 \end{pmatrix}\tp{W} ,
    \begin{pmatrix} D_1 & 0 \\ 0 & I_{n-r}\end{pmatrix} W^{-1}
    \right). \q \q (\mrm{rank}(C_1)=r)
  \dn\end{align}
  Using this we see that 
  \begin{align}
    |\det(CZ_0+D)| &= \left| \det \left(
    \begin{pmatrix}C_1 & 0 \\ 0 & 0\end{pmatrix} Z_0[W]
    + \begin{pmatrix}D_1 & 0 \\ 0 & I_{n-r} \end{pmatrix}
    \right) \right| \dn\\
    &= \left|\det(C_1 Z_0[W]_* +D_1)\right|,\n
  \end{align}
  where $Z_0[W]_*$ is the leading $r \times r$ sub-matrix of $Z_0[W]$.

Now take any $T_0 \in \Lambda_n$.
  Dropping all but one term in \eqref{fbdbk} we get from \eqref{fest1} that $ \sum_{F\in\basis}|a_{F}(T_0)|^2$ is
  \begin{align*} 
    &\ll_n \frac{k^{3n(n+1)/4} \expn{4\pi \tr(T_0 Y_0)} \det(Y_0)^{-k+ (n+1)/2 +\epsilon}}
        {|\det(C_1 Z_0[W]_*+D_1)|^{n+1+\epsilon}}.
    \end{align*}    
Note the inequality $|\det(C_1 Z_0[W]_*+D_1)| \ge \det(\Im(Z_0[W]_*)) = \det(Y_0[W]_*)$, since $\Im(A_*) = \Im(A)_*$ for any $A \in \hn$.    
   
  We choose $Y_0 = \frac{k}{4 \pi} \cdot T_0^{-1}$
  (the choice of the constant is dictated by the exponential term above)
  and then see from the above that
  \begin{align}
   \sum_{F\in\basis}|a_{F}(T_0)|^2     & \ll_n \frac{ k^{5n(n+1)/4-nk+\epsilon} \expn{nk} (4 \pi)^{nk}
                 \det(T_0)^{k-(n+1)/2 +\epsilon}}
               { \det \big(  (\tp{W} \frac{k}{4 \pi} T_0^{-1} W)_* \big)^{n+1+\epsilon}}
      \label{fcbound-prelim}\\
    & \ll_n \frac{ k^{(n+1)(\frac{5n}{4}-r)-nk+\epsilon} \expn{nk} (4 \pi)^{nk}
               \det(T_0)^{k-(n+1)/2+\epsilon}}
             {\det \big( (\tp{W} \det(T_0)^{-1} \Adj(T_0) W)_*\big)^{n+1+\epsilon}}
      \dn\\
    & \ll_n \expn{nk} (4\pi)^{nk} k^{(n+1)(\frac{5n}{4}-r)-nk}
    \det(T_0)^{k + (n+1)(r-1/2+\epsilon)} . \label{fcbound-badt}
  \end{align}
Here $\Adj(T)= \det(T) T^{-1}$ is the adjoint matrix to $T$ and the last inequality follows 
since \linebreak $\det \big((\tp{W} \Adj(T_0)W)_* \big) \ge 2^{-r}$, as $\Adj(T_0)$ is positive definite and has half-integer entries.

  When $r=n$ we obviously have a better exponent of
  $\det(T_0)$ as $W=1_n$ in this case.
  From \eqref{fcbound-prelim} directly, we have the exponent $\det(T_0)^{k+(n+1)/2+\epsilon}$.
  Otherwise, the worst case with respect to $\det(T_0)$ occurs when $r=n-1$.
  Here the final exponent can be bounded by $\det(T_0)^{k+(n+1)(n-3/2)+\epsilon}$.
  
  The worst exponent with respect to $k$ occurs when $r=0$. In this case,
  it is $k^{5n(n+1)/4 -nk +\epsilon}$. Comparing these with \eqref{fc1}, we see that
\begin{align*}
\big(\sum_{F\in\basis}|a_F(T_0)|^2 \big)^{1/2} \ll_n \frac{ ( 4\pi)^{nk/2} k^{5n(n+1)/8 - n/4 +\epsilon} |T_0|^{ k/2 + (n+1)(n/2-3/4) +\epsilon} }
  {\Gamma(k)^{n/2}   },
\end{align*} 
which gives the lemma.
\end{proof}

\begin{rmk} \label{overcount}
\begin{inparaenum}[(1)]
\item
The Resnikoff-Salda\~{n}a  conjecture (\cite{resnikoff}) predicts that $\beta= (n+1)/4$ in \eqref{fc0} with any implied constant depending only on $F$, but perhaps this is true with the given uniformity with respect to $k$ as well.

\item Using the results in this paper, we have the bounds $\bkzz \ll_n k^{(5n-3)(n+1)/4}$ everywhere in $\hn$. We can put this bound directly in \eqref{fbdbk} to obtain \eqref{fc1} with $(\alpha, \beta)=((5n^2-3)/8,0)$. We still need \lemref{beta0} to bootstrap.

\item
By this method, even the conjectural bound $\bkzz \ll k^{3n(n+1)/4 +\epsilon}$ would give rise to the pair $(\alpha, \beta)=(3n(n+1)/8 +\epsilon, 0)$.

\end{inparaenum}
\end{rmk}

For convenience of notation, we define (with $c_{n,k}$ as in \eqref{cnk})
\begin{equation} \label{pt}
  p(T)= p_{n,k}(T):=  \summ_{F \in \mathcal B^n_k} |a_F(T)|^2 = c_{n,k}^{-1} \det(T)^{k-\frac{n+1}{2}} a_{P_T}(T) .
\end{equation}

\begin{lem} \label{bkconnect}
For all  $Z \in \hn$ one has
\begin{equation}
\bkzz=\summ_{F \in \bnk}\det(Y)^k |F(Z)|^2 \le  \left( \summ_{T \in \Lambda_n} p(T)^{1/2} \, \det(Y)^{k/2} \, \exp(- 2 \pi \tr(TY)) \right)^2 .
\end{equation}
\end{lem}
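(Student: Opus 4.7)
The plan is to combine the Fourier expansion of each $F\in\bnk$ with Minkowski's inequality in $\ell^2(\bnk)$ to separate the sum over $F$ from the sum over $T$. First I would start from the Fourier expansion
\[
F(Z) \;=\; \sum_{T \in \Lambda_n} a_F(T)\, e(\tr(TZ)),
\]
observe that $|e(\tr(TZ))| = \exp(-2\pi\tr(TY))$, and apply the triangle inequality to get the pointwise bound
\[
|F(Z)| \;\le\; \sum_{T \in \Lambda_n} |a_F(T)|\, \exp(-2\pi\tr(TY)).
\]
Squaring and summing over the orthonormal basis gives
\[
\sum_{F \in \bnk} |F(Z)|^2 \;\le\; \sum_{F \in \bnk} \Bigl( \sum_{T \in \Lambda_n} |a_F(T)|\, e^{-2\pi\tr(TY)} \Bigr)^{2}.
\]

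Next I would view the nonnegative quantities $b_{F,T} := |a_F(T)|\, e^{-2\pi\tr(TY)}$ as vectors $v_T = (b_{F,T})_{F \in \bnk} \in \ell^2(\bnk)$ indexed by $T$. Minkowski's inequality (equivalently, the triangle inequality for $\|\cdot\|_{\ell^2(\bnk)}$) yields
\[
\Bigl( \sum_{F} \bigl( \sum_{T} b_{F,T} \bigr)^{2} \Bigr)^{1/2} \;=\; \Bigl\| \sum_T v_T \Bigr\|_{\ell^2} \;\le\; \sum_T \|v_T\|_{\ell^2} \;=\; \sum_{T} e^{-2\pi\tr(TY)} \Bigl( \sum_{F} |a_F(T)|^2 \Bigr)^{1/2}.
\]
Recognising the inner sum as $p(T)$ via the definition in \eqref{pt}, squaring both sides, and multiplying by $\det(Y)^k$ (which can be absorbed into each summand as $\det(Y)^{k/2}$) delivers exactly the claimed inequality.

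No genuine obstacle is expected: the entire argument is a two-step application of the triangle inequality, once at the level of the Fourier series of a single cusp form and once in $\ell^2$ over the orthonormal basis. The only point worth mentioning is that positivity of the $b_{F,T}$ ensures we incur no loss when passing from the absolute value inside the Fourier series to a sum; and that since the Fourier series of a cusp form converges absolutely on $\hn$, all manipulations above are justified.
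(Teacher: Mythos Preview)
Your proof is correct and essentially the same as the paper's: the paper expands $|F(Z)|^2$ as a double sum over $S,T$ and applies Cauchy--Schwarz in the $F$-variable to $\sum_F |a_F(S)|\,|a_F(T)|$, which is precisely your Minkowski step $\bigl\|\sum_T v_T\bigr\|_{\ell^2}\le\sum_T\|v_T\|_{\ell^2}$ written out term by term. The only cosmetic difference is that you name the inequality as Minkowski in $\ell^2(\bnk)$, whereas the paper invokes Cauchy--Schwarz directly on the unfolded double sum.
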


\begin{proof}
The proof follows by the Cauchy-Schwartz inequality. Namely, we have
\begin{align*}
    \bkzz &\le \det(Y)^k \sum_{S,T} \exp(- 2 \pi \tr (S+T)Y) \sum_F |a_F(S)| \, |a_F(T)| \\
    & \le \det(Y)^k \sum_{S,T} \exp(- 2 \pi \tr (S+T)Y) \big( \sum_F |a_F(S)|^2 \big)^{1/2} \cdot \big( \sum_F |a_F(T)|^2 \big)^{1/2} \\
    & \le \det(Y)^k \big(  \sum_{T \in \Lambda_n} p(T)^{1/2} \exp(- 2 \pi \tr(TY)) \big)^2. \qedhere
\end{align*} 
\end{proof}

\begin{thm} \label{f-febd}
  For all $Z \in \fn$ with $Y= \Im(Z)$ and any $0 <\epsilon<1$,
  \begin{align}\label{eq:lem3statement}
    \sqrt{\bkzz}
    \ll_{n, \epsilon} 
    \left(\frac{k^{n}}{\det(Y)}\right)^{(n+1)/4 + \gamma_n}  \cdot k^{5n(n+1)/8+\epsilon} +
    \expn{- c_0 k^\epsilon}\det(Y)^{-\frac{n+1}{2}}
  \end{align}
 where $c_0>0$ depends only on $n$. Furthermore, if the largest diagonal entry $y_n>nk\,r_n/(2\pi)$ (with $r_n$ as in \eqref{reduct-implication})we have the exponential decay:
 \begin{align}
     \bkzz \ll_n \expn{-c_0 y_n}.
 \end{align}
\end{thm}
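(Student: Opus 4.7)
My plan is to use Lemma~\ref{bkconnect} to bound $\sqrt{\bkzz}$ by the smooth sum $q_k(Y) := \sum_{T\in\Lambda_n} p(T)^{1/2}\det(Y)^{k/2} e^{-2\pi\tr(TY)}$, and then insert the Fourier-coefficient bound from Lemma~\ref{beta0}, $p(T)^{1/2} \ll_{n,\epsilon} \Gamma(k)^{-n/2}(4\pi)^{nk/2} k^{(5n^2+3n)/8+\epsilon}\det(T)^{k/2+\gamma_n}$. This reduces the problem to estimating $S(Y) := \sum_{T\in\Lambda_n,\,T>0} \det(T)^{k/2+\gamma_n} e^{-2\pi\tr(TY)}$. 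The key observation is that, by the AM--GM inequality applied to the eigenvalues of $TY$, the summand is dominated by the one-variable function $u^{n(k/2+\gamma_n)} e^{-2\pi u}/(n^n\det(Y))^{k/2+\gamma_n}$ in $u := \tr(TY)$, which peaks at $u_0 = n(k/2+\gamma_n)/(2\pi) \asymp k$.

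I would then split $S(Y)$ at the threshold $\tr(TY) \le A := u_0\,k^\epsilon$. For $\tr(TY) > A$, the summand lies past the saddle $u_0$ and decays at least as fast as $\exp(-c_0 k^\epsilon)$ times polynomial factors; combined with the Stirling expansion $(4\pi)^{nk/2}/\Gamma(k)^{n/2} \sim (4\pi e/k)^{nk/2} k^{n/4}$, the $\det(Y)^{k/2}$ factor, and a crude volume estimate for the tail count, this produces the exponential decay term $\exp(-c_0 k^\epsilon)\det(Y)^{-(n+1)/2}$ of \eqref{eq:lem3statement}. For the main range $\tr(TY) \le A$, I would invoke the lattice-counting Lemma~\ref{count-bd2} (referenced later in the paper) to control $\#\{T\in\Lambda_n:T>0,\,\tr(TY)\le A\}$, and bound $S(Y)$ by this count multiplied by the saddle-value of the summand. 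Assembling the exponents via Stirling then yields the polynomial first term of \eqref{eq:lem3statement}.

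For the final claim of exponential decay when $y_n > nk r_n/(2\pi)$, I would use Minkowski-reduction \eqref{reduct-implication} to get $Y \ge Y_D/r_n$ and combine with $t_{nn}\ge 1$ for $T\in\Lambda_n$, $T>0$, yielding $\tr(TY) \ge t_{nn} y_n/r_n \ge y_n/r_n > nk/(2\pi)$ for \emph{every} $T$ in the sum. Hence every term lies past $u_0$ by the factor $c := (y_n/r_n)/u_0 \ge 2$, so each summand is suppressed by $(c e^{1-c})^{n(k/2+\gamma_n)}$ relative to the saddle-value; since $c e^{1-c} < 1$ for $c > 1$, these suppressions combine to exponential decay in $y_n$ after the polynomial prefactors are absorbed.

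The main obstacle will be to calibrate the three ingredients --- Stirling's asymptotics, the saddle-point analysis, and the lattice-point count --- precisely enough that the first term of \eqref{eq:lem3statement} emerges with the exponent $(n+1)/4+\gamma_n$ on $k^n/\det(Y)$, rather than the looser bound one obtains from a naive peak-times-volume estimate. This likely requires restricting the bulk count to a thin shell around $u_0$ whose lattice volume matches the Gaussian width of the integrand rather than the full range $\tr(TY)\le A$.
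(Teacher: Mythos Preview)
Your overall architecture matches the paper's: bound $\sqrt{\bkzz}$ by $q_k(Y)$ via Lemma~\ref{bkconnect}, insert the pair $(\alpha,\beta)=\bigl((5n^2+3n)/8,\,-\gamma_n\bigr)$ from Lemma~\ref{beta0}, and then estimate $q_k(Y)$ by localizing the sum over $T$. Your argument for the exponential decay when $y_n>nk\,r_n/(2\pi)$ is also essentially correct and parallels Lemma~\ref{expdecay-cutoff}.

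The gap is in the main term, and it is exactly the one you flag but do not resolve. Collapsing $\det(TY)$ to $(\tr(TY)/n)^n$ via AM--GM and localizing in the single variable $u=\tr(TY)$ discards the information needed for the exponent $(n+1)/4+\gamma_n$. A trace condition --- whether the full ball $\tr(TY)\le A$ or your proposed thin shell $|u-u_0|\ll k^{1/2+\epsilon}$ --- places no constraint on how the individual eigenvalues of $TY$ are distributed; the $\det(Y)$-exponent in the count remains $-(n+1)/2$ (from Lemma~\ref{count-bd} or a shell-differenced version of it), and one lands on the weaker bound~\eqref{n/2bd} of Proposition~\ref{p-febd}, i.e.\ exponent $(n+1)/2+\gamma_n$ on $k^n/\det(Y)$. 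Note also that Lemma~\ref{count-bd2} does \emph{not} bound $\#\{T:\tr(TY)\le A\}$ as you invoke it: it bounds $\#\cy$, where $\cy$ is defined by the much more restrictive condition that \emph{every} eigenvalue of $TY$ lies in $k/(4\pi)+O(k^{1/2+\epsilon})$.

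The paper's route (Proposition~\ref{p-febd}) avoids AM--GM entirely. Writing $\det(TY)^{k/2}e^{-2\pi\tr(TY)}$, after Stirling, as the product $\prod_j m(\lambda_j)$ over the eigenvalues $\lambda_j$ of $TY$ with $m(x)=(4\pi x/k)^{k/2}e^{(k-4\pi x)/2}$, one sees that each factor peaks at $\lambda_j=k/(4\pi)$ and is already sub-exponentially small once any single $\lambda_j$ leaves the window $k/(4\pi)+O(k^{1/2+\epsilon})$. Thus the significant $T$ are exactly those in $\cy$, and it is the sharper count $\#\cy\ll k^{n(n+1)/4+\epsilon}\det(Y)^{-(n+1)/4}$ of Lemma~\ref{count-bd2} --- valid because $Y\gg 1_n$ in $\fn$ --- that produces~\eqref{n/4bd} and hence the stated exponent $(n+1)/4+\gamma_n$. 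Replacing your one-dimensional saddle in $u$ by this $n$-fold eigenvalue localization is the missing ingredient.
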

We immediately note that in the region $\{ Z \in \fn \mid \det(Y) \gg_n k^n \}$, the contribution from the Fourier expansion is at most $k^{5n(n+1)/8+\epsilon}$. By moving slightly higher in the cusp if necessary one gets exponential decay, see \lemref{expdecay-cutoff}.
\begin{proof}
In this proof we will use the Siegel fundamental domain $\fundamentalDomain$.
From \lemref{bkconnect}, we see that $\bkzz \le q_k(Y)^2 $,
where we have put
\begin{align} \label{qkdef}
  q_k(Y):=  \sum_{T \in \Lambda_n} p(T)^{1/2} \, \det(Y)^{k/2} \, \expn{- 2 \pi \tr(TY)} .
\end{align}
From \lemref{beta0}, we get that $p(T)^{1/2}$ satisfies \eqref{fc1} for $(\alpha, \beta)=((5n^2+3n)/8, -\gamma_n)$.
The proof now follows if we apply the bound \eqref{n/4bd} in \propref{p-febd} given below to $q_k$. We can use \eqref{n/4bd} since $Y\gg 1_n$ in $\fundamentalDomain$. The assertion about the exponential decay also follows from \eqref{expdbd} in \propref{p-febd}.
\end{proof}
From the above, it remains to analyse the growth of the function $q_k(Y)$ closely, as it will be the backbone of our Fourier series calculations. Such properties of $q_k$ are summarized in the following proposition. Apart from its general usage, it will be used in section~\ref{n2-fouriercoeffs} and subsection~\ref{lipsec1}.

\begin{prop} \label{p-febd}
Suppose that $p(T)=p_{n,k}(T)$ satisfies the bound \eqref{fc1}
\begin{equation}
  p(T)^{1/2} \ll_{n}
  \frac{ ( 4\pi)^{nk/2} k^{\alpha} \det(T)^{ k/2-\beta} }
  {\Gamma(k)^{n/2}}
.\end{equation}
Then for the function $q_k \colon \Sym{\R}^+ \to \complex$ defined in \eqref{qkdef}, we have
  \begin{equation} \label{n/2bd}
    q_k(Y)
    \ll_{n, \epsilon}
    \left(\frac{k^{n}}{\det(Y)}\right)^{(n+1)/2 -\beta}  \cdot k^{\frac{n}{4}+\alpha} +
    \expn{-c_0 k^\epsilon}\det(Y)^{-(n+1)/2},
  \end{equation}
for $Y$ reduced. Additionally, if we restrict to $Y\gg 1_n$, we have the better bound
  \begin{equation} \label{n/4bd}
    q_k(Y)
    \ll_{n, \epsilon}
    \left(\frac{k^{n}}{\det(Y)}\right)^{(n+1)/4 -\beta}  \cdot k^{\frac{n}{4}+\alpha+\epsilon} +
    \expn{-c_0 k^\epsilon}\det(Y)^{-(n+1)/2}
  .\end{equation}
  Here $c_0>0$ depends only on $n$ and $0<\epsilon<1$.
  Finally, if the largest diagonal entry $y_n>nk\,r_n/(2\pi)$ we have the decay:
  \begin{equation} \label{expdbd}
    q_k(Y) 
    \ll_{n} \expn{-c_0 y_n} \det(Y)^{-(n+1)/2}
  .\end{equation}
\end{prop}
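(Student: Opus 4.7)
The plan is to combine the hypothesis on $p(T)^{1/2}$ with a careful analysis of the function $\Phi(T) := \det(T)^{k/2}\expn{-2\pi\tr(TY)}$ on the cone $\Sym{\R}^+$, then count lattice points near its maximum. Substituting the assumed bound into the definition \eqref{qkdef} and applying Stirling's formula to the combination $(4\pi)^{nk/2}/\Gamma(k)^{n/2}$ reduces the problem to estimating
\begin{align*}
    \Sigma(Y) := \det(Y)^{k/2} \sum_{T \in \Lambda_n} \det(T)^{k/2-\beta} \expn{-2\pi\tr(TY)}.
\end{align*}
The function $\Phi$ attains its unique maximum over $\Sym{\R}^+$ at $T_0 := (k/4\pi) Y^{-1}$, where $\Phi(T_0) = (k/4\pi e)^{nk/2} \det(Y)^{-k/2}$; this peak value multiplied by $\det(Y)^{k/2}$ and by the Stirling factor cancels down to $k^{n/4}$ up to $O(1)$. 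Each effectively contributing lattice point therefore yields a main contribution of size $k^{n/4+\alpha}\det(T_0)^{-\beta} = k^{n/4+\alpha-n\beta}\det(Y)^{\beta}$, so the task reduces to counting the effective support.

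Next, I would split $\Lambda_n = \cy \sqcup (\Lambda_n \setminus \cy)$ via a Gaussian-type cutoff: let $\cy$ consist of those $T$ for which the rescaled matrix $S := Y^{1/2} T Y^{1/2}$ lies in a window of radius $\sqrt{k}\cdot k^{\epsilon/2}$ around $(k/4\pi) I_n$. On $\cy$, a Taylor expansion of $\log \Phi$ around $T_0$ shows that $\Phi(T)$ stays within a factor of $\expn{C k^\epsilon}$ of its peak; off $\cy$, it is smaller than the peak by a factor of $\expn{-c_0 k^\epsilon}$. The tail sum is then estimated by extracting $\expn{-c_0 k^\epsilon}$ from the smallness factor and dominating the remainder by a convergent theta-type series $\sum_{T \in \Lambda_n} \expn{-\pi\tr(TY)}$; standard bounds on such series give $O(\det(Y)^{-(n+1)/2})$, producing exactly the tail term appearing in \eqref{n/2bd} and \eqref{n/4bd}.

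For the main contribution, the remaining task is to estimate $\#\cy$, and this is the crux. I would invoke \lemref{count-bd2}, whose two regimes account precisely for the dichotomy between \eqref{n/2bd} and \eqref{n/4bd}. In the Minkowski-reduced case, the volume of the image of $\cy$ in the cone $\Sym{\R}^+$ under the substitution $T \mapsto S$ is $\asymp k^{n(n+1)/4}$, and pulling back via the Jacobian $\det(Y)^{(n+1)/2}$ yields $\#\cy \ll (k^n/\det(Y))^{(n+1)/2}$ after matching real dimensions. Under the stronger hypothesis $Y \gg 1_n$, the half-integrality of the entries of $T$ places sharp restrictions on admissible coordinates (now constrained to a lattice of spacing comparable to $1$ rather than the finer scale $k^{-1/2}/\sqrt{y_i}$ of the unrestricted count), effectively cutting the counting dimension in half and giving $\#\cy \ll (k^n/\det(Y))^{(n+1)/4+\epsilon}$. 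Multiplying either count by the per-term main contribution $k^{n/4+\alpha-n\beta}\det(Y)^{\beta}$ and using the algebraic identity $k^{n\sigma}\det(Y)^{-\sigma} \cdot k^{-n\beta}\det(Y)^{\beta} = (k^n/\det(Y))^{\sigma-\beta}$ with $\sigma \in \{(n+1)/2,\,(n+1)/4\}$ produces the respective main terms.

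Finally, for the exponential-decay assertion \eqref{expdbd}, the hypothesis $y_n > nk\,r_n/(2\pi)$ combined with \eqref{reduct-implication} and the fact that $t_{ii} \in \z_{\ge 1}$ for every $T \in \Lambda_n$ forces $\tr(TY) \ge \tr(TY_D)/r_n \ge y_n/r_n > nk/(2\pi)$ uniformly in $T$. Every lattice point then lies deep in the tail regime, and the earlier theta-series estimate extracts a uniform factor $\expn{-c_0 y_n}$ while leaving a residual summable over $\Lambda_n$. The principal obstacle throughout is the improved lattice count $\#\cy \ll (k^n/\det(Y))^{(n+1)/4+\epsilon}$ under the additional hypothesis $Y \gg 1_n$ — it is there that the positive-definiteness of $T$ and the half-integral structure of $\Lambda_n$ must be exploited in tandem, and it is precisely this step that underlies the improvement of \eqref{n/4bd} over \eqref{n/2bd} that drives the main theorem.
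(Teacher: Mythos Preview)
Your strategy matches the paper's almost exactly: substitute the hypothesis, apply Stirling, localise via the eigenvalues of $Y^{1/2}TY^{1/2}$ to a set $\cy$, count $\#\cy$, and bound the tail. Two points, however, are looser in your sketch than in the paper and deserve flagging.

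First, the two lattice-point counts do not both come from \lemref{count-bd2}. The cruder bound $\#\cy \ll (k^n/\det(Y))^{(n+1)/2}$ behind \eqref{n/2bd} is \lemref{count-bd}, which uses only the trace constraint $\tr(TY)\ll k$ and counts entries of $T$ directly; the finer bound $\#\cy \ll (k^n/\det(Y))^{(n+1)/4+\epsilon}$ behind \eqref{n/4bd} is \lemref{count-bd2}, which exploits the full eigenvalue window together with $Y\gg 1_n$. Your volume heuristic actually computes $k^{n(n+1)/4}\det(Y)^{-(n+1)/2}$, not $k^{n(n+1)/2}\det(Y)^{-(n+1)/2}$, so the reasoning you attach to the first regime is mismatched (though your stated exponents are the right ones).

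Second, the tail estimate is more delicate than ``extract $\expn{-c_0k^\epsilon}$ and dominate by a theta series''. The summand carries a factor $\det(T)^{-\beta}$, and in the paper's main application one has $\beta<0$ (cf.\ \lemref{beta0}), so $\det(T)^{-\beta}$ grows and a naive majorisation by $\sum_T\expn{-\pi\tr(TY)}$ does not immediately absorb it. The paper handles this in \lemref{lem:expdecay} via a dyadic decomposition in the largest eigenvalue of $TY$, balancing the count in each shell against the decay of $m(\lambda_n)$; similarly the final assertion \eqref{expdbd} is isolated as \lemref{expdecay-cutoff}. Your high-level picture is correct, but this is where the actual work sits.
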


\begin{rmk} \label{expected-fcbd}
\begin{inparaenum}

\item
If we estimate $p(T)$ using Poincar\'e series, we would get $\alpha= (3n^2+n)/8$. Suppose that we also get $\beta=0$. Then multiplying the bound for $\bkzz^3$ from \eqref{n/4bd} with the square of the second bound from \thmref{bkbd}, we get 
\begin{equation}
    \bkzz^5 \ll_n k^{3n(n+1)/2+ n(n+1) } \cdot k^{ 9n(n+1)/4+\epsilon } = k^{19n(n+1)/4+\epsilon},
\n\end{equation}
and thus $\bkzz \ll k^{19n(n+1)/20}$ for all $Z \in \fn$ and so also in $\hn$.

\item
Extending the above analysis for values of $\beta$ in the range $0\le\beta\le(n+1)/4$, we can show
\begin{equation}
    \bkzz \ll_n k^{\frac{n(n+1)}{4}\left(\frac{19(n+1)-40\beta}{5(n+1)-8\beta}\right)+\epsilon}
\end{equation}
At $\beta=(n+1)/4$, using \eqref{n/4bd} alone, we would get
the conjectural upper bound $k^{3n(n+1)/8+\epsilon}$ for $B_k(Z,Z)$ (cf. \eqref{bergconj}) up to the $\epsilon$.
\end{inparaenum}
\end{rmk}

\begin{proof}
  Taking \eqref{fc1} as a bound for $p(T)^{1/2}$, we see that
  \begin{equation} \label{eq:lem3expanded_condition}
   q_k(Y) \ll_n
    \sum_{T\in\Lambda_n}
            {\frac{(4\pi)^{\frac{nk}{2}}\det(TY)^{\frac{k}{2}}e^{-2\pi\tr(TY)}
                    k^{\alpha}\det(T)^{-\beta}
                  }
                  {\Gamma(k)^{\frac{n}{2}}}
            }
  .\end{equation}
  Let $Y^{1/2}$ be the unique symmetric positive definite square root of $Y$.
  The matrix $TY$ is conjugate to the positive definite symmetric matrix
  $Y^{1/2}TY^{1/2}$. So it is diagonalizable in $\R$ with
  positive eigenvalues. Let these eigenvalues be $\lambda_j, j\in\{1,...,n\}$.
  Replacing $\det(TY)$ with the product of these eigenvalues and $\tr(TY)$ with their sum
  and applying the Stirling's approximation
  $\Gamma(k)\asymp \sqrt{\frac{2\pi}{k}}(k/e)^k$, we get
  \begin{equation}
   q_k(Y) \ll_n
    \sum_{T\in\Lambda_*}{\left(
              \prod_j{\left(\sqrt{\frac{4\pi\lambda_j}{k}}\right)}^{k}
              \expn{\frac{nk}{2}-2\pi\tr(TY)}
                  k^{\alpha+\frac{n}{4}}\det(T)^{-\beta}
            \right)}
  .\dn\end{equation}
  We now look at a function $m$ defined as follows which captures the contribution of each summand in terms of the eigenvalues of $TY$:
  \begin{equation} \label{femass}
    q_k(Y) \ll_n
    \sum_{T\in\Lambda_*}{\left(
               \left(\prod_{j}{m(\lambda_j)}\right)
               k^{\alpha+\frac{n}{4}}\det(T)^{-\beta}
             \right)}; \q \q m(x) := \left(\frac{4\pi x}{k}\right)^{k/2} \expn{(k-4\pi x)/2}.
  \end{equation}
    
  This function $m$ attains a maximum value of $1$
  when $x$ is equal to $k/4\pi$.
  If $|\frac{x-k}{4\pi}| \gg_\epsilon k^{\frac{1}{2}+\epsilon}$, then
  $m(x) \ll \expn{\frac{-k^{2\epsilon}}{4}+O(k^{-1/2+3\epsilon})}$ for
  $0<\epsilon<1/2$.
  We claim that the total contribution from all $T$ for which any of the eigenvalues
  of $TY$ lie sufficiently away from $\frac{k}{4\pi}$ has exponential decay
  with respect to $k$. We will prove this claim later in \lemref{lem:expdecay}.
  So, the values of $T$ that make a significant contribution
  to the summation in \eqref{eq:lem3expanded_condition},
  are those for which the eigenvalues of $TY$ satisfy the following criterion
  \begin{equation}\label{eq:eigenvalue_restriction}
  \lambda_i \in \frac{k}{4\pi} + O(k^{\frac{1}{2}+\epsilon}) \q \text{ for all } 1 \le i \le n.
  \end{equation}

  Let us define the subset $\cy$ of $\Lambda_n$ by
  \begin{align} \label{cydef}
    \cy:=
    \left\{ T\in\Lambda_n \midmid \text{ all eigenvalues of } TY
    \text{ are of magnitude } \frac{k}{4 \pi} + O(k^{\frac{1}{2} +\epsilon})
    \right\}
  .\end{align}

  From the above discussion, we can write 
  \begin{equation} \label{eq:febd}
     q_k(Y)
    \ll_n \left( 
      \sum_{T \in \cy}{1} + \sum_{T \notin \cy}{\prod_j m(\lambda_j)}
    \right)            
    k^{\alpha+\frac{n}{4}}\det(T)^{-\beta}
  .\end{equation}  
  For the first summation above, i.e., when $T \in \cy $,
  we have $\det(TY) \asymp (k/(4\pi))^{n}$ from the eigenvalue condition.
  Moreover since $\det(T) \ge 2^{-n}$, $\cy$ is empty if $\det(Y)\gg_n k^n$.
  The contribution from the first sum is 
  \begin{align} \label{cysum}
    \sum_{T \in \cy}k^{\alpha+\frac{n}{4}}\det(T)^{-\beta}
    \ll_{n} \, \# \cy \cdot
    \det(Y)^{\beta} k^{-n \beta}
    k^{\alpha+\frac{n}{4} }
  .\end{align}

  Now we treat this as a counting problem on $T$.\label{count1}
  We provide two bounds, which are given in \lemref{count-bd} and \lemref{count-bd2}.
  For the first one we simply note that $\cy$ is contained in the set
  $\{ T \mid \tr(TY) \asymp \frac{nk}{4\pi} \}$ where the implied
  constant may depend only on $n$. And then we proceed using \lemref{count-bd} to get $\#\cy\ll_n k^{n(n+1)/2}\det(Y)^{-(n+1)/2}$ which gives \eqref{n/2bd}. If we take into account
 \lemref{lem:expdecay} below, we see that the contribution of the second summation in \eqref{eq:febd} is negligible (having a sub-exponential decay).
  
  If we are also given $Y\gg 1_n$, similarly we can use \lemref{count-bd2} to get \eqref{n/4bd}.
  
   Finally we use \lemref{expdecay-cutoff} when $\det(Y)$ is large getting exponential decay. This will complete the proof of the theorem.
\end{proof}

\begin{lem} \label{count-bd}
  For $Y\in R_n$ the Minkowski reduced domain, we can bound the size of the set
  \begin{equation}
    \#\{T\in\Lambda_n|\tr(TY)\le ck\} \ll_{n,c}
    k^{\frac{n(n+1)}{2}}\det(Y)^{-\frac{n+1}{2}}
  .\end{equation}
\end{lem}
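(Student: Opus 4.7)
The idea is to count using the two features of $T\in\Lambda_n$: the diagonal entries $t_{ii}$ are positive integers, and positivity of $T$ constrains the off-diagonals by $|t_{ij}|\le\sqrt{t_{ii}t_{jj}}$. First I reduce the condition on the trace to a condition on the diagonal entries. Since $Y\in R_n$, \eqref{reduct-implication} gives $Y\ge Y_D/r_n$, and because $T\ge 0$,
\[
\tr(TY) \;\ge\; \tfrac{1}{r_n}\tr(TY_D) \;=\; \tfrac{1}{r_n}\sum_{i=1}^n t_{ii}\,y_i .
\]
Hence any $T$ in the set satisfies $\sum_i t_{ii}y_i \le C k$ with $C:=r_n c$, so in particular $t_{ii}\le Ck/y_i$. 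Also $t_{ii}\ge 1$ because $T>0$ and $t_{ii}\in\mathbb Z$.

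Next I count. For a fixed diagonal $(t_{11},\dots,t_{nn})$ the off-diagonal $t_{ij}$ with $i<j$ is a half-integer satisfying $|t_{ij}|\le\sqrt{t_{ii}t_{jj}}$ (positivity of the principal $2\times 2$ minor). Since $t_{ii}t_{jj}\ge 1$, the number of admissible $t_{ij}$ is $\ll\sqrt{t_{ii}t_{jj}}$. Multiplying over $i<j$ yields $\prod_{i<j}\sqrt{t_{ii}t_{jj}}=\prod_i t_{ii}^{(n-1)/2}$, so
\[
\#\{T\in\Lambda_n:\tr(TY)\le ck\}
\;\ll_n\;
\sum_{\substack{t_i\in\mathbb Z_{\ge1}\\ \sum t_iy_i\le Ck}}\prod_{i=1}^n t_i^{(n-1)/2}.
\]

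Finally I estimate the sum by an integral. Comparing it to its continuous analogue (each integer point contributes an amount comparable to the integrand on a unit cube, with a harmless shift $Ck\rightsquigarrow 2Ck$ in the constraint),
\[
\sum_{\substack{t_i\ge 1\\ \sum t_iy_i\le Ck}}\prod_i t_i^{(n-1)/2}
\;\ll_n\;
\int_{\substack{s_i\ge 0\\ \sum s_iy_i\le 2Ck}}\prod_i s_i^{(n-1)/2}\,ds .
\]
The substitution $u_i = y_i s_i$ scales the integrand by $\prod y_i^{-(n-1)/2}$ and the measure by $\prod y_i^{-1}$, so the integral equals $\bigl(\prod_i y_i\bigr)^{-(n+1)/2}\int_{u_i\ge 0,\ \sum u_i\le 2Ck}\prod_i u_i^{(n-1)/2}du$, which by a further scaling $u_i=2Ck\cdot v_i$ is $\asymp_{n,c} k^{n(n+1)/2}\bigl(\prod_i y_i\bigr)^{-(n+1)/2}$. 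Combining with $\prod_i y_i=\det(Y_D)\asymp_n\det(Y)$, valid for $Y$ Minkowski reduced by \eqref{reduct-implication}, gives the claimed bound. The only mildly delicate point is the passage from sum to integral together with the verification that the auxiliary boundary contribution is absorbed (e.g.\ noting $\sum y_i\le\sum t_iy_i\le Ck$ so that the region $\{\sum s_iy_i\le Ck+\sum y_i\}$ is contained in $\{\sum s_iy_i\le 2Ck\}$); everything else is elementary.
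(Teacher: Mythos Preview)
Your proof is correct and follows essentially the same approach as the paper: reduce $\tr(TY)$ to $\sum_i t_{ii}y_i$ via \eqref{reduct-implication}, bound the off-diagonals by $|t_{ij}|\le\sqrt{t_{ii}t_{jj}}$ from positivity, and count. The only difference is cosmetic: the paper replaces the simplex constraint $\sum t_iy_i\le Ck$ by the cruder box constraint $t_i\ll k/y_i$ and multiplies the resulting counts directly, giving $\prod_i(k/y_i)\cdot\prod_{i<j}k/\sqrt{y_iy_j}=k^{n(n+1)/2}\det(Y_D)^{-(n+1)/2}$ in one line, whereas you keep the simplex and pass to an integral; both yield the same bound.
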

\begin{proof}
  Since $Y$ is reduced, we can assume that it is almost equal to its diagonal $Y_D$.
  If the diagonal elements of $T$ are $\{ t_1, t_2, \cdots, t_n \}$, then we have 
  \begin{align}
    \tr(TY_D) = \sum_i t_i y_i \ll \tr(TY) \ll k
    \implies 1 \le t_i \ll k/y_i.
  \dn\end{align}
  Since $T>0$, when $i \neq j$, one has $|2 t_{i,j}| \le (t_i t_j)^{1/2}$, from which it follows that the number of choices for $T$ is, up to a constant depending only on $n$,
  \begin{equation*} 
  \ll_n
  \prod_i \frac{k}{y_i} \cdot \prod_{i<j} \frac{k}{(y_i y_j)^{1/2}} = \frac{k^{n+n(n-1)/2}}{\det(Y)^{1+(n-1)/2}} = \frac{k^{n(n+1)/2}}{\det(Y)^{(n+1)/2}}. \qedhere
  \end{equation*} 
\end{proof}

The following lemma gives a different bound for the quantity $\cy$. It generalises \cite[Lemma~4]{blomer2015size} to higher degrees -- uniform in $\det(Y)$.
\begin{lem} \label{count-bd2}
  Let $\cy$ be as in \eqref{cydef}, $Y$ is Minkowski reduced and $Y\gg 1_n$.
  Then we have the following bound on its size:
  \begin{align}
    \#\cy
    &\ll_n k^{\frac{n(n+1)}{4}+\epsilon}\det(Y)^{-\frac{n+1}{4}}. \label{new-count}
  \end{align}
\end{lem}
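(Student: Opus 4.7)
The plan is to reformulate the eigenvalue concentration condition defining $\cy$ into an operator-norm bound on a symmetric matrix, extract entry-wise constraints on $T$, and count. Since $TY = Y^{-1/2}\,M\,\yp$ with $M := \yp T\yp$ symmetric, the matrices $TY$ and $M$ share the same real spectrum; thus $T \in \cy$ is equivalent to
\[
\bigl\| M - \tfrac{k}{4\pi}\,1_n \bigr\|_{op} \ll_n k^{1/2+\epsilon}.
\]
Since the operator norm dominates each entry of a symmetric matrix, and $Y$ Minkowski-reduced gives $Y \asymp_n Y_D = \mathrm{diag}(y_1,\ldots,y_n)$, this descends to entry-wise constraints
\[
|t_{ii}\,y_i - k/(4\pi)| \ll_n k^{1/2+\epsilon}, \q |t_{ij}|\sqrt{y_iy_j}\ll_n k^{1/2+\epsilon} \q (i\ne j).
\]

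Counting integer $t_{ii}$ and half-integer $t_{ij}$ in these intervals yields
\[
\#\cy \ll_n \prod_i\bigl(1 + k^{1/2+\epsilon}/y_i\bigr) \prod_{i<j}\bigl(1 + k^{1/2+\epsilon}/\sqrt{y_iy_j}\bigr).
\]
The key observation for passing to the claimed bound is that $\cy \ne \emptyset$ forces $y_i \ll_n k$ for every $i$: otherwise the admissible interval for $t_{ii}$, namely $k/(4\pi y_i) + O(k^{1/2+\epsilon}/y_i)$, would contain no positive integer. Under this restriction (equivalently, $\det(Y) \ll_n k^n$), a case analysis partitioning indices by whether $y_i \le k^{1/2+\epsilon}$ or not, combined with the identities $\prod_i y_i = \det(Y)$ and $\prod_{i<j}\sqrt{y_iy_j} = \det(Y)^{(n-1)/2}$, yields $\#\cy \ll_n k^{n(n+1)/4+\epsilon}\det(Y)^{-(n+1)/4}$. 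In the regime where all $y_i \le k^{1/2+\epsilon}$, the product evaluates directly to $k^{n(n+1)/4+\epsilon}/\det(Y)^{(n+1)/2}$, which is even stronger since $\det(Y)\gg 1$.

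The main technical point is the case analysis in the mixed regime where some $y_i$ exceed $k^{1/2+\epsilon}$ while others do not: the product decomposes into saturated factors (for small $y_i$) and trivial factors (for large $y_i$), and one must verify that the residual is absorbed by the $\det(Y)^{(n+1)/4}$ growth of the target. The hypothesis $y_i \ll_n k$ forced by $\cy\ne\emptyset$ is precisely what prevents the bound from being violated in this intermediate regime, since unrestricted $y_i$ could in principle allow the residual $\det(Y)^{(n+1)/4}$ factor to outstrip the other terms.
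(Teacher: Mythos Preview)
Your overall strategy coincides with the paper's: translate the eigenvalue concentration into an operator-norm bound on $\yp T\yp - \tfrac{k}{4\pi}1_n$, push this down to entry-wise constraints on $T$, and count lattice points in the resulting box. The execution differs in one respect. You extract intervals of length $\ll k^{1/2+\epsilon}/\sqrt{y_iy_j}$ for each entry $t_{ij}$, which forces the case analysis at the end because these intervals can have length $<1$. The paper instead bounds the entries of $Y^{-1/2}$ by $(y_iy_j)^{-1/4}$ (via $|w_{ij}|\le \sqrt{w_{ii}w_{jj}}$ and $w_{ii}\asymp y_i^{-1/2}$), absorbs the inner summation indices using $Y\gg 1_n$, and lands on the \emph{weaker} interval length $k^{1/2+\epsilon}/(y_iy_j)^{1/4}$. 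The payoff is that this quantity is automatically $\gg 1$ once $y_i\ll k$, so the ``$+1$'' never appears and the product evaluates directly to $k^{n(n+1)/4+\epsilon}\det(Y)^{-(n+1)/4}$ with no case split. Your tighter intervals are not wrong, and your case analysis does close (indeed $1+k^{1/2+\epsilon}/\sqrt{y_iy_j}\ll k^{1/2+\epsilon}/(y_iy_j)^{1/4}$ under $1\ll y_i\ll k$, which is exactly the paper's factor), but the paper's route is cleaner.

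One imprecision: your stated constraints $|t_{ii}y_i-k/(4\pi)|\ll k^{1/2+\epsilon}$ and $|t_{ij}|\sqrt{y_iy_j}\ll k^{1/2+\epsilon}$ fix specific centers that are not quite right, since $Y$ is not diagonal. The correct statement is that $t_{ij}\sqrt{y_iy_j}$ lies in an interval of this length centered at $\tfrac{k}{4\pi}(Y_D^{1/2}Y^{-1}Y_D^{1/2})_{ij}$, a $Y$-dependent constant. This does not affect the count (only the length matters), and your deduction that $\cy\ne\emptyset$ forces $y_i\ll k$ survives since the diagonal center is still $\asymp k/y_i$; but it would be cleaner to phrase the constraints as ``$t_{ij}$ lies in a fixed interval of length $\ll k^{1/2+\epsilon}/\sqrt{y_iy_j}$'' rather than pinning down the center.
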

\begin{proof}
  Put $\kappa=O(k^{1/2+\epsilon})$ so that
  all eigenvalues of $Y^{1/2}TY^{1/2}$ are in the range $(\frac{k}{4\pi}-\kappa,\frac{k}{4\pi}+\kappa)$.
  
  Put $J:=(\frac{k}{4\pi}-\kappa)1_n$ and $K=(k_{i, j}):=Y^{1/2}TY^{1/2}-J$.
  $K$ is still positive definite.
  Its operator norm is bounded above by $2\kappa$ as well.
  By equivalence of operator norm and sup norm for positive definite matrices,
  there exist a constant $c_1$ only dependent on $n$ such that
  $\norm{K}_{\infty}\le 2c_1\kappa$. i.e., $|k_{i,j}|<2c_1\kappa$ for all $i, j$.
  We can write
  \begin{equation}
    T-Y^{-1/2}JY^{-1/2}=Y^{-1/2}KY^{-1/2}.
  \dn\end{equation}
  Let $(m_{i, j})$ be the entries of $Y^{-1/2}JY^{-1/2}$ and $(w_{i, j})$ be the entries of $Y^{-1/2}$.
  Then we calculate
  \begin{align}
    (Y^{-1/2}KY^{-1/2})_{i, j}
    =\sumn_{1\le p, q\le n}w_{i,p}k_{p,q}w_{q,j}
  .\dn\end{align}
  As $Y^{-1/2}$ is positive definite, $2|w_{i, j}|\le \sqrt{w_{i, i}w_{j, j}}$.
  Further as $Y$ is reduced in the Minkowski sense, from \eqref{reduct-implication},
  we have $r_n^{-1}Y<Y_D<r_nY$.
  Using the fact the if $A<B$ then $\sqrt{A}\le \sqrt{B}$ (cf. \cite{bapat1999linearalgebra}) we have
  $r_n^{-1/2}Y^{-1/2}\le Y_D^{-1/2}\le r_n^{1/2}Y^{-1/2}$ and we get
  $r_n^{-1/2}w_{i, i}\le y_i^{-1/2}\le r_n^{1/2}w_{i, i}$. This yields
  \begin{align}
    |(Y^{-1/2}KY^{-1/2})_{i, j}|
    <\sum_{1\le p, q\le n}2c_1r_n\kappa(y_iy_jy_py_q)^{-1/4}
  .\end{align}
  Recall that $Y\gg 1_n$ and use this to bound $y_p$ and $y_q$. From this we conclude
  $|t_{i, j}-m_{i, j}|\le2n^2c_1r_n\kappa(y_iy_j)^{-1/4}$.
  
  The number of possible values $t_{i, j}$
  can take is $\ll_n \kappa/\sqrt[4]{y_iy_j}+1$. Put back $\kappa=O(k^{1/2+\epsilon})$,
  and observe that if any $y_i\gg k$, then $\tr(TY)\gg k$
  and $\cy$ would be empty.
  Hence $k^{1/2+\epsilon}/\sqrt[4]{y_iy_j} \gg 1$.
  Multiplying the count for all $t_{i, j}$ gives us the total size of $\cy$ and proves the lemma.
\end{proof}

\begin{lem} \label{lem:expdecay}
  With $\cy, m$ and $\lambda_j$ as defined in \propref{p-febd}, we have for all $0 <\epsilon <1$, and for some absolute constant $c_0>0$ a `sub-exponential' type decay as follows.
  \begin{equation} \label{eq:expdecay}
  \sumn_{T \notin \cy}{\prod \nolimits_j m(\lambda_j)}
    k^{\alpha+\frac{n}{4}}\det(T)^{-\beta}
    \ll_n \exp \left( - c_0 k^{\epsilon} \right)\det(Y)^{-\frac{n+1}{2}}.
  \end{equation}
\end{lem}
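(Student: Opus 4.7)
My plan is to split the sum over $T\notin\cy$ according to the size of $\tr(TY)$ and exploit the sharp decay of the function $m(x) = \exp((k/2)\,g(4\pi x/k))$ with $g(u) = \log u + 1 - u$. Concavity of $g$ and its Taylor expansion at $u=1$ give $g(u) \le -(u-1)^2/4$ for $u\in[1/2, 3/2]$ and a linear bound $g(u) \le -u/8$ for $u$ large; consequently $m(x)\ll \exp(-c_1 k^{2\epsilon})$ whenever $|x - k/(4\pi)|> c_n k^{1/2+\epsilon}$ in a moderate range, while $m(x)\ll \exp(-c_2 x)$ for $x$ very large. Log-concavity of $m$ additionally yields the AM-GM style domination $\prod_j m(\lambda_j) \le [m(\tr(TY)/n)]^n$.

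First I would handle the subregion $\tr(TY)\le Cnk$ for a fixed constant $C$ (say $C=1/\pi$). Every $\lambda_j = O(k)$ here; since $T\notin\cy$ at least one $\lambda_{j_0}$ is moderate-bad, so $m(\lambda_{j_0})\ll \exp(-c_1 k^{2\epsilon})$, and with $m(\lambda_j)\le 1$ for the remaining indices this forces $\prod_j m(\lambda_j)\ll \exp(-c_1 k^{2\epsilon})$ uniformly on the subregion. \lemref{count-bd} bounds the number of such $T$ by $O_n(k^{n(n+1)/2}\det(Y)^{-(n+1)/2})$, Hadamard combined with the trace bound gives $\det(T)\ll k^n/\det(Y)$, and hence $\det(T)^{-\beta}$ contributes at worst a polynomial in $k$ times a power of $\det(Y)^{-1}$. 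The exponential gain $\exp(-c_1 k^{2\epsilon})$ swamps these polynomial corrections and yields the required $\exp(-c_0 k^\epsilon)\det(Y)^{-(n+1)/2}$ upon choosing any $c_0 < c_1$ (and $k$ large).

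For the complementary subregion $\tr(TY)>Cnk$, I would dyadically decompose $\tr(TY)\in [A, 2A]$ for $A=2^j\ge Cnk$. On such a shell, $\prod_j m(\lambda_j)\le [m(\tr(TY)/n)]^n \ll \exp(-c_2 A)$ by the linear regime of $g$, while the natural strengthening of \lemref{count-bd} (essentially identical proof) gives $\#\{T:\tr(TY)\le A\}\ll A^{n(n+1)/2}/\det(Y)^{(n+1)/2}$. The resulting dyadic series is dominated by its first term, so the total contribution from this subregion is $\ll \exp(-c_3 k)\det(Y)^{-(n+1)/2}$, far stronger than needed.

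The main obstacle is making sure that the factor $\det(T)^{-\beta}$ with $\beta<0$ (where the crude estimate $\det(T)\ll k^n/\det(Y)$ introduces at worst an extra $\det(Y)^{-|\beta|}$) does not spoil the $\det(Y)^{-(n+1)/2}$ on the right-hand side. This is where care is needed: one uses the slack in $\exp(-c_1 k^{2\epsilon})$ relative to $\exp(-c_0 k^\epsilon)$, which leaves room of size $\exp(c_1 k^{2\epsilon}/2)$ to absorb any polynomial-in-$k$ distortion involving $\det(Y)$ in the effective range of $Y$ under consideration. Combining the two subregions then delivers the sub-exponential decay as claimed.
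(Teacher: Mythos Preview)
Your proposal is correct and follows essentially the same strategy as the paper: split $\Lambda_n\setminus\cy$ into a ``moderate'' region and a dyadically decomposed ``large'' region, use the one--bad--eigenvalue argument to get sub-exponential decay in the moderate region, and combine with the counting lemma \lemref{count-bd} in each shell. The paper parametrises the regions by the largest eigenvalue $\lambda_n$ of $TY$ (its $\cy^0$ is $4\pi\lambda_n\le 2k$, and $\cy^t$ are dyadic shells in $\lambda_n$), whereas you parametrise by $\tr(TY)$; since $\lambda_n\le\tr(TY)\le n\lambda_n$ these are equivalent up to constants depending on $n$. In the large regime the paper simply uses $\prod_j m(\lambda_j)\le m(\lambda_n)$ (as $m\le 1$) and then bounds $m(\lambda_n)$ directly, while your log-concavity inequality $\prod_j m(\lambda_j)\le [m(\tr(TY)/n)]^n$ is a pleasant alternative giving the same exponential saving.

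One remark on your final paragraph: the worry about the extra factor $\det(Y)^{-|\beta|}$ when $\beta<0$ is real, and your phrase ``in the effective range of $Y$ under consideration'' papers over it. The paper handles this the same way you implicitly do, writing $\det(T)^{-\beta}\le (2^{t+1}k)^{-n\beta}$, which tacitly uses $\det(Y)\gg_n 1$. This is harmless in all applications of the lemma (Theorem~\ref{f-febd} has $Z\in\fn$ so $Y\gg 1_n$, and in \lemref{lipfo} one has $\beta=0$), but strictly speaking the lemma as stated for merely Minkowski-reduced $Y$ needs that extra hypothesis when $\beta<0$; you should say so explicitly rather than invoke unspecified slack.
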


\begin{proof}
  We will bound the summation by dividing the set $\Lambda_n\setminus\cy$ into regions $\cy^t, t\ge 0$.
  We will find the maximum value the summand can attain in each region and a bound on the number of elements in each region.
  The regions will be characterised by the largest eigenvalue $\lambda_n$ of $TY$.
  Define
  \begin{align}
  \cy^0 &:= \left\{ T\in\Lambda_n\setminus \cy
          \midmid 4\pi\lambda_{n}\le 2k
          \right\}\\
  \cy^t &:= \left\{ T\in\Lambda_n\setminus \cy
          \midmid 2^{t} < 4\pi\lambda_{n}\le 2^{t+1}k
          \right\}, \q t\geq 1 \dn
  .\end{align}
  If $T \in \cy^0$, then let $\lambda$ be an eigenvalue of $TY$ such that $|\lambda - \frac{k}{4 \pi}| \ge \frac{c}{4 \pi} k^{1/2+\epsilon}$ for a constant $c>0$ depending only on $\epsilon$. Recall that the function $m(x)$ has a single maximum at $x=\frac{k}{4 \pi}$ with a value equal to $1$.
  In either side, by monotonicity of $m$, we see that $ \prod_j m(\lambda_j) \ll_n m(\lambda)$. Now
 \begin{align*}
  m&\left(\frac{k\pm c k^{\frac{1}{2}+\epsilon}}{4\pi}\right) 
  =  \exp \left(\frac{k}{2}\log\left(1\pm c k^{-\frac{1}{2}+\epsilon}\right)
      \mp \frac{c}{2} k^{\frac{1}{2}+\epsilon}
    \right)\\ 
    &=  \exp \left( \frac{k}{2} \big( \pm c k^{-\frac{1}{2}+\epsilon} - c^2 \frac{k^{-1+2\epsilon}}{2}
      \pm O ( k^{-\frac{3}{2}+3\epsilon} ) \big)
      \mp \frac{c}{2} k^{\frac{1}{2}+\epsilon}\right) \\
    & = \exp \left(  - c^2 k^{2\epsilon}/ 4+O (k^{-\frac{1}{2}+ 3\epsilon}  ) \right)\ll \expn{- c^2 k^{2\epsilon} /4},
  \end{align*}
  Therefore, we get the following sub-exponential decay
  \begin{align}
  \prodd_j m(\lambda_j) \ll \expn{- c^2 k^{2\epsilon} /4}.
  \end{align}
  Further for any $t\geq 1$, we have $
    m\left(\frac{2^tk}{4\pi}\right) = \sqrt{2^{kt} e^{k(1-2^t)}}
    = e^{\frac{k}{2}(t\log(2)+1-2^t)} $.
  If $\beta<0$, in $\cy^t$, $\det(T)^{-\beta}\le (2^{t+1}k)^{-n\beta}$.
  Otherwise, we can simply use $\det(T)^{-\beta}\le 2^{n\beta}$.
  The number of points in these $\cy^t$ are bounded by (using \lemref{count-bd})
  \begin{equation}
    \#(\cy^t) \ll_n(2k)^{\frac{(t+1)n(n+1)}{2}}\det(Y)^{-\frac{n+1}{2}}
  .\dn\end{equation}
  We bound the summation by summing over count of points in each region $\cy^t$
  times a bound on the  maximum value $\prod_j m(\lambda_j)\det(T)^{-\beta}$ could take in that region.
  \begin{multline*} 
  \sumn_{T\in\Lambda_n\setminus\cy}{  \prod \nolimits_j {m(\lambda_j)}\det(T)^{-\beta}}
  \le \sumn_{t=0}^{\infty}{\left(\#(\cy^t)
    \max \nolimits_{T\in \cy^t}{\prod \nolimits_{j}{m(\lambda_j)}\det(T)^{-\beta}}
    \right)} \\
    \ll \biggl(\exp(- c_1 k^{2\epsilon})
         (2k)^{n(n+1)}\max\left((2k)^{-n\beta},2^{n\beta}\right)\\
         + \sumn_{t=1}^{\infty} \exp( k(\log(2)t+1-2^t) )(2k)^{ \frac{(t+1)n(n+1)}{2} }
         \max\left((2^{t+1}k)^{-n\beta}, 2^{n\beta}\right)\biggr)
         \det(Y)^{-\frac{n+1}{2}}
  \end{multline*}
It is easy to check that the above sum is bounded by $\exp(- c_0 k^{2 \epsilon} )\det(Y)^{-(n+1)/2}$. Replacing $\epsilon$ by $\epsilon/C$ for some $C>0$, if necessary, gives us the lemma.
\end{proof}

\begin{lem} \label{expdecay-cutoff}
  For the constant $C_n=\frac{2 \pi}{n \, r_n}$, in the region $\{\, Z \in R_n \mid y_n \ge k/C_n \,\}$, we have $q_k(Y) \ll_n \expn{-c_1 y_n}\det(Y)^{-(n+1)/2}$ for some constant $c_1>0$ depending only on $n$.
  
  Alternatively in the region $\{\, Y \in R_n \mid \det(Y) \ge (\frac{k}{C_n})^n \,\}$ the quantity $q_k(Y)$ decays exponentially: that is, $q_k(Y)\allowbreak \ll_n \exp(- c_1 \sqrt[n]{\det(Y)})$.
\end{lem}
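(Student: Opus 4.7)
The plan is to extract an exponential factor $\exp(-c_1 y_n)$ from every summand of $q_k(Y)$ by exploiting the lower bound $\tr(TY)\ge y_n/r_n$ that holds for $Y$ Minkowski-reduced and every $T\in\Lambda_n$: indeed $Y\ge Y_D/r_n$, the diagonal entries of $T$ are integers $\ge 1$, and $y_n$ is the largest diagonal entry of $Y$, so $\tr(TY)\ge \tr(TY_D)/r_n\ge t_{nn}y_n/r_n\ge y_n/r_n$. In the cutoff region $y_n\ge k/C_n=nr_nk/(2\pi)$ this yields $\tr(TY)\ge nk/(2\pi)$, whence the largest eigenvalue $\lambda_n$ of $TY$ satisfies $\lambda_n\ge k/(2\pi)$ for every $T$; in particular the set $\cy$ of \propref{p-febd} is empty throughout our region.

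Following the set-up of \propref{p-febd}, I would use \eqref{fc1} together with Stirling's approximation to rewrite
\[
q_k(Y)\ll_n k^{\alpha+n/4}\sum_{T\in\Lambda_n}\det(T)^{-\beta}\prod_{j=1}^n m(\lambda_j),
\]
with $m(\lambda)=(4\pi e\lambda/k)^{k/2}e^{-2\pi\lambda}$ attaining its unique maximum $m=1$ at $\lambda=k/(4\pi)$. A tangent-line bound on $\log m$ at the point $u=4\pi\lambda/k=2$ gives $m(\lambda)\le 2^{k/2}e^{-\pi\lambda}$ for $\lambda\ge k/(2\pi)$. Applying this at $\lambda_n\ge y_n/(nr_n)$ and balancing the $2^{k/2}$ against a fraction of $\exp(-\pi y_n/(nr_n))$ by means of the lower bound $y_n\ge nr_nk/(2\pi)$ produces $m(\lambda_n)\le\exp(-c_1 y_n)$ for any positive $c_1\le\pi(1-\log 2)/(2nr_n)$, a constant depending only on $n$.

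To preserve summability over $T$ while exploiting this pointwise decay, I would split $\prod_j m(\lambda_j)=[\prod_j m(\lambda_j)]^{\theta}\cdot[\prod_j m(\lambda_j)]^{1-\theta}$ for a fixed $\theta\in(0,1)$ and use $[\prod m]^{\theta}\le m(\lambda_n)^{\theta}\le\exp(-\theta c_1 y_n)$ to pull out the $y_n$-decay. The residual factor has exactly the shape treated in \lemref{lem:expdecay} -- its associated function $\tilde m=m^{1-\theta}$ is still maximised at $\lambda=k/(4\pi)$, so the corresponding `central' set is empty in our region -- and the dyadic tail estimate driven by \lemref{count-bd} then yields $\sum_T\det(T)^{-\beta}[\prod_j m]^{1-\theta}\ll_n\det(Y)^{-(n+1)/2}$ up to a polynomial factor in $k$. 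This residual polynomial, together with the prefactor $k^{\alpha+n/4}$, is absorbed into the exponential by slightly decreasing $c_1$ and invoking $k\le C_n y_n$ in the region, giving the claimed bound $q_k(Y)\ll_n\exp(-c_1 y_n)\det(Y)^{-(n+1)/2}$.

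The alternative statement is then immediate: in the region $\det(Y)\ge(k/C_n)^n$, reduction of $Y$ forces $\det(Y)\asymp\prod_i y_i\le c_n y_n^n$, so $y_n\gg\det(Y)^{1/n}\ge k/C_n$ and the primary estimate applies. Since $\det(Y)\gg 1$ in this region, the polynomial factor $\det(Y)^{-(n+1)/2}$ can be absorbed into $\exp(-c_1 y_n)$ at the cost of a slightly smaller constant, yielding $q_k(Y)\ll_n\exp(-c_1\sqrt[n]{\det(Y)})$. The main technical delicacy is the implicit dependence on $y_1$ (via $\det(T)^{-\beta}$ and $\norm{Y^{-1}}$) entering the residual sum bound; this is the reason the lemma is ultimately applied with $Y$ in the Siegel fundamental domain, where $Y_D\gg_n 1_n$ and the bookkeeping remains clean.
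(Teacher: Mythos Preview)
Your proposal is correct and follows essentially the same line as the paper: both show $\lambda_n\ge \tr(TY)/n\ge y_n/(nr_n)\ge k/(2\pi)$ for every $T$, deduce that $\cy$ is empty, extract exponential decay from $m(\lambda_n)$, and then control the remaining sum by the dyadic tail argument of \lemref{lem:expdecay} together with \lemref{count-bd}. The only cosmetic differences are that the paper computes $m(bk/(2\pi))=\exp(-k(2b-1-\log 2b)/2)$ directly rather than via your tangent-line bound, and carries out the dyadic decomposition in one pass (with $b$-dependent shells $2^tbk<4\pi\lambda_n\le 2^{t+1}bk$) rather than first splitting off a factor $[\prod m]^{\theta}$; neither change affects the substance.
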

We note that this also means exponential decay with respect to $k$ from the condition on $Y$.
\begin{proof}
  Let $\det(Y)=(bk/C_n)^n$ for some $b\ge 1$.
  In the above-mentioned region, by Hadamard's inequality we also have $y_1y_2 \cdots y_n \ge (\frac{bk}{C_n})^n $. Therefore, we must have $y_n \ge \frac{bk}{C_n}$. 

  As in the proof of \propref{p-febd}, if $\lambda_1 \le \lambda_2 \le \cdots \le \lambda_n$ denote the eigenvalues of $TY$, then it is easy to see $n \lambda_n \ge \tr(TY) \ge r_n^{-1} \sum_{j=1}^n t_i y_i \ge y_n/r_n$. Here we have used that $Y \ge Y_D/r_n$ from \eqref{reduct-implication}.
  Therefore, we get that $\lambda_n \ge \frac{bk}{n \, C_n \, r_n}$.

  If we choose $C_n$ such that $\frac{1}{n \, C_n \, r_n} \ge \frac{1}{2 \pi}$, i.e., $c_n \le \frac{2 \pi}{n \, r_n}$, then we must have $\lambda_n \ge \frac{bk}{2 \pi}$. This would mean that for all $T \in \Lambda_n$, $T \not \in \mc C_Y$. In fact,
  \begin{align}
    m(\lambda_n) \le m(bk/(2\pi)) =\expn{-k(2b-1-\log(2b))/2}  \le \expn{-c''bk}
  \dn\end{align}
  for some appropriate constant $c''$ and $k$ large enough. We proceed in the same way as that of \lemref{lem:expdecay} by considering the regions similar to $\cy^t$ ($t\ge 1$) characterised by $2^tbk<4\pi\lambda_n\le 2^{t+1}bk$ to get $q_k(Y)\ll_n\expn{-c_0bk}\det(Y)^{-(n+1)/2}$. We do not repeat the arguments again.
  We do note the absence of the $\cy^0$ region and hence we get full exponential decay
  (as opposed to a sub-exponential decay).
  Put back $b=C_n\sqrt[n]{\det(Y)}/k$ to finish the proof.
\end{proof}

\section{Using the Petersson trace formula when \texorpdfstring{{$n=2$} }{n2}} \label{n2-fouriercoeffs}
In this section, we will get hold of a better bound on the Fourier coefficients, when $n=2$. Since in degree $2$ there is sufficient information about Bessel functions and Kloosterman sums; we bound $a_F(T)$ simply by bounding the $T$-th Fourier coefficient $a_{P_T}(T)$ of the Siegel Poincar\'e series $P_T$ and use the Petersson trace formula \eqref{peter}.

We recall the definition of the Poincar\'e series $P_Q(Z)$ ($Q \in \Lambda_n$):
\begin{equation}
  P_Q(Z):=\sum_{\gamma \in \Gamma_{\infty}\backslash\Gamma_n}
           {\det(j(\gamma, Z))^{-k}e^{2\pi i\tr(Q\gamma(Z))}}.
\end{equation}
Here $\Gamma_{\infty}$ is the subgroup of $\Sp{\z}$ with elements of the form
$\smat{I}{B}{0}{I}$.
Work towards bounding its Fourier coefficients $a_{P_Q}(T)$ has been done in
\cite{kitaoka1984fourier} when $n=2$. It shows
\begin{equation}
  |a_{P_Q}(T)| \ll_{k,Q} \det(T)^{k/2-1/4+\epsilon}.
\dn\end{equation}
However, this result does not provide an explicit dependence on the weight $k$
for these Fourier coefficients, which is crucial for us.
The dependence on $k$ has been tracked in \cite[Proposition 3.3]{kowalski2012local} and \cite[Theorem 5.11]{dickson2015thesis} but
only for certain special type of matrices $Q$ and $T$. Also, $Q$ was treated as fixed.
We are interested in the case when $Q=T$, with $T$ varying.
We follow the approach in \cite[p.355-363]{kowalski2012local} with this additional bookkeeping.
\begin{lem}
The $T$-th Fourier coefficient of the $T$-th Poincar\'e series can be bounded as follows
\begin{equation} \label{eq:poincare_fourier_bound}
    a_{P_T}(T) = \delta(T, T) + O_{\epsilon}(k^{-2/3}\det(T)^{1+\epsilon}).
\end{equation}
Here $\delta(T, Q)=\#\{U\in\GL[2]{\z}\mid UQ\tp{U}=T\}$.
\end{lem}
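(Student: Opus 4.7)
The plan is to follow Kitaoka's explicit Fourier expansion of $P_T$ from \cite{kitaoka1984fourier} for $n=2$, while carefully tracking the $k$-dependence along the lines of \cite[\S3]{kowalski2012local} and \cite[\S5]{dickson2015thesis}, but now uniformly in the varying $T$ (rather than with $T$ fixed). The starting point is to unfold $a_{P_T}(T) = \int_{X \bmod 1} P_T(Z) e(-\tr(TZ))\,dX$ and decompose the coset space $\Gamma_\infty \backslash \Gamma_2$ by the rank $r$ of the lower-left block $C$ of $\gamma$, which takes values in $\{0,1,2\}$.

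The rank-zero contribution comes from $\gamma \in \Gamma_\infty \cdot \Gamma_{0,2}$. Using the translation invariance by $\Gamma_\infty$ and the action of $U \in \GL[2]{\z}$ of the form $\smat{U}{0}{0}{\tp{U}^{-1}}$ on $T$ via $T \mapsto UT\tp{U}$, the $X$-integral kills everything except terms for which $UT\tp{U} = T$. This yields exactly the diagonal contribution $\delta(T,T)$.

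For the rank $r \in \{1,2\}$ contributions, Kitaoka's formulas express them as sums of the shape
\[
  \sum_{C}\frac{1}{\det(C)^{k}} \, K(T,T;C) \, \mathcal{J}_{k}(T,C),
\]
where the sum is over appropriate representatives of symmetric integral $C$, $K(T,T;C)$ is a matrix-argument Kloosterman sum (which for $n=2$ factors through classical Kloosterman sums), and $\mathcal{J}_k$ is a Bessel-type special function (classical $J_{k-3/2}$ for rank $1$, a matrix-argument Bessel for rank $2$). Here I would bound $K(T,T;C)$ by the Weil bound, producing $O_\epsilon(\det(C)^{1/2+\epsilon})$ with an acceptable $\det(T)^\epsilon$ factor absorbed from divisor-type sums. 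For $\mathcal{J}_k$, the key input is the uniform bound $|J_\nu(x)| \ll \nu^{-1/3}$ in the Airy transition range combined with $|J_\nu(x)| \ll x^{-1/2}$ in the oscillatory range, applied for $\nu = k - 3/2$; the matrix-argument case reduces via the $n=2$ diagonalization to a product of such classical estimates.

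The main obstacle is to obtain the saving $k^{-2/3}$ rather than the more obvious $k^{-1/3}$. Exploiting the two-dimensional matrix argument produces in effect two Bessel factors, each contributing $k^{-1/3}$, but one has to be careful that the convergence of the $C$-sum is not destroyed and that the $\det(T)^\epsilon$ absorption is uniform in $k$. Once this is established, summing all three rank-contributions and applying Stirling to the prefactors yields the claimed bound $\delta(T,T) + O_\epsilon(k^{-2/3}\det(T)^{1+\epsilon})$. The $\det(T)^{1+\epsilon}$ factor is expected to come from the combined growth of $\det(C)^{-k} K(T,T;C)$-estimates summed over $C$, with the Kloosterman bound contributing the $\epsilon$-power and the matrix size of $T$ setting the polynomial scale.
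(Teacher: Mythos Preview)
Your high-level strategy matches the paper's exactly: follow Kitaoka's expansion as in \cite{kitaoka1984fourier} and \cite{kowalski2012local}, split by $\mrm{rank}(C)\in\{0,1,2\}$, read off $\delta(T,T)$ from rank $0$, and track the $T$-dependence through the Bessel/Kloosterman expressions for ranks $1,2$. But two of your technical inputs are off and would not carry the argument as written.

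First, the Kloosterman bound. There is no ``Weil bound'' $K(T,T;C)\ll_\epsilon \det(C)^{1/2+\epsilon}$ for the degree-$2$ matrix sums. What the paper (and \cite{kitaoka1984fourier}) actually uses is Kitaoka's Proposition~1: writing $C=U^{-1}\smat{c_1}{}{}{c_2}V^{-1}$ with $c_1\mid c_2$, one has $K(T,T;C)\ll c_1^2 c_2^{1/2}(c_2,t_{22})^{1/2}$. This is much weaker in $c_1$ than your claimed bound, and the convergence of the $C$-sum (and the precise $\det(T)$-exponent) hinges on this explicit shape together with the splitting of the $C$-range into Kitaoka's regions $\mathscr C_1,\mathscr C_2,\mathscr C_3$, which you do not mention.

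Second, the source of the $k^{-2/3}$. It does \emph{not} come from multiplying two pointwise bounds $J_{k-3/2}\ll k^{-1/3}$. In rank $1$ there is only a \emph{single} Bessel factor $J_{k-3/2}(\pi\det(T)/(mc))$ summed over $c,m\ge 1$; following \cite{kowalski2012local} with $d$ replaced by $\det(T)$ yields $\ll k^{-5/6}\det(T)^{1+\epsilon}$. In rank $2$ the object is the integral $\int_0^1 J_{k-3/2}(4\pi s_1 t)J_{k-3/2}(4\pi s_2 t)\,t\,dt/\sqrt{1-t^2}$, and the saving $k^{-2/3}$ is obtained only in the region $\mathscr C_3$ as a bound on the integral itself (the other regions give $2^{-k}$ and $k^{-1}$ with different $\det(T)$-powers). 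Your product heuristic is the right intuition, but the actual proof requires the $\mathscr C_j$ case analysis, and the dominant $\det(T)^{1+\epsilon}$ in the final answer comes from the $\mathscr C_1$ contribution and the rank-$1$ sum, not from a generic Kloosterman estimate.
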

\begin{proof}
From \cite[p.~158]{kitaoka1984fourier} one can write
\begin{equation}
  P_Q(Z)=
  \sumn_{M\in\Gamma_{\infty}\backslash\Gamma_n/\Gamma_{\infty}\q}
  \sumn_{T\in\Lambda_n}
  h_Q(M, T)e^{2\pi i\tr(TZ)},
\dn\end{equation}
where 
\begin{equation}
  a_{P_Q}(T)=
  \sumn_{M\in\Gamma_{\infty}\backslash\Gamma_n/\Gamma_{\infty}}
  h_Q(M, T)
.\dn\end{equation}
Note that we deliberately write $h_Q$ instead of $h_T$ to be consistent with the notation in \cite{kitaoka1984fourier}, \cite{kowalski2012local}. We reiterate, for us $Q=T$. In the rest of this section, we will closely follow these two references and indicate only the appropriate changes required to achieve our goal.

Then if $M$ is expressed as $\smat{A}{B}{C}{D}$, the above summation
is divided into cases depending on the rank of $C$. When $\rk C=0$,
$h_Q(M, T) = \delta(Q, T)$ and so their sum over all $M$ with $C=0$
will be the size of the orthogonality group $\#\mrm{Aut}(T)=O(1)$.

When $\rk C=1$, it has been shown in \cite[(3.1.8)]{kowalski2012local} that
\begin{equation}
  \sum_{\rk C=1} h_Q(m, T)
  \ll \sum_{c, m\geq 1} \left(\frac{\det(T)}{\det(Q)}\right)^{k/2-3/4}
  m^{-1/2}(c, m)^{1/2}
  \left|J_{k-3/2}\left(\frac{\pi\sqrt{\det(TQ)}}{mc}\right)\right|
\dn\end{equation}
\begin{equation} \label{rk1}
  \sum_{\rk C=1} h_T(m, T)
  \ll \sum_{c, m\geq 1}
  m^{-1/2}(c, m)^{1/2}
  \left|J_{k-3/2}\left(\frac{\pi\det(T)}{mc}\right)\right|
.\end{equation}
Here $J_{k}$ is the Bessel function.
Using various approximations for the Bessel function, it was shown that
the the r.h.s. of \eqref{rk1} is $\ll k^{-5/6}$ but with implicit constants dependent on $\det(T)$.
In all cases, $d$ appears in the aforementioned calculation and for our situation one has to replace $d$ with $\det(T)$.
This allows us to easily track the dependence to be $\det(T)^{1+\delta}$ for any $\delta>0$. Hence
\begin{equation}
  \sumn_{\rk C=1} h_T(m, T)
  \ll \det(T)^{1+\delta} k^{-\frac{5}{6}}
.\end{equation}
Finally, when $\rk C =2$, from \cite[(3.1.11)]{kowalski2012local} for a fixed value of $C$, 
\begin{equation}
  \sumn_{D (\mod C)} h_Q(M, T)
  \ll\left(\frac{|T|}{|Q|}\right)^{\frac{k}{2}-\frac{3}{4}}
  \lvert C\rvert^{-\frac{3}{2}}
  K(Q, T; C)
  \mathcal{J}_k(P(C))
\dn\end{equation}
\begin{equation}
  \sumn_{D (\mod C)} h_T(M, T)
  \ll \lvert C\rvert^{-\frac{3}{2}}
  K(T, T; C)
  \int_0^1 J_{k-\frac{3}{2}}(4\pi s_1 t) J_{k-\frac{3}{2}}(4\pi s_2 t)
  \frac{tdt}{\sqrt{1-t^2}}
.\dn\end{equation}
Here $K(Q, T; C)$ is a Kloosterman type sum and $(s_1^2, s_2^2)$ are the
eigenvalues of $T{\tp{C}}^{-1}TC^{-1}$. We refer the reader to \cite[p. 165]{kitaoka1984fourier} for the definition of $\mathcal{J}_k(P(C))$.
First from \cite[\S1 Proposition 1]{kitaoka1984fourier} we have
$K(T, T; C)\ll c_1^2c_2^{1/2}(c_2, t_{22})^{1/2}$ where we have written $C=U^{-1}\smat{c_1}{}{}{c_2}V^{-1}$ for
$U, V\in\GL{\z}$ and $1\le c_1|c_2$ and $t_{22}$ is the $(2, 2)^{\text{th}}$ entry of $T$.
Here again, in order to handle the Bessel functions, the summation was divided into different regions, which are
$\mathscr C_1, \mathscr C_2$ and $\mathscr C_3$ (cf. \cite[p. 166]{kitaoka1984fourier}),
depending on the sizes of $4\pi s_j$.

In $\mathscr C_1$, the Bessel integral is bounded by $(s_1s_2)^{2+\delta}2^{-k}$ and $\#(\mathscr C_1)\ll (s_1s_2)^{-1}$.
This gives us
\begin{equation}
  \sumn_{C\in\mathscr C_1} h_T(m, T)\ll \det(T)^{1+\delta}2^{-k}.
\dn\end{equation} 
Similarly, in $\mathscr C_2$, we can deduce from what was shown in \cite{kowalski2012local} that
\begin{equation}
  \sumn_{C\in\mathscr C_2} h_T(m, T) \ll \det(T)^{1/2+\delta}k^{-1}.
\dn\end{equation}
Finally, in $\mathscr C_3$, the Bessel integral is bounded by $k^{-2/3}$. Note that the sum over $C$ is infinite, and using the elementary divisors we can write
\begin{align}
  \sumn_{C\in\mathscr C_3} h_T(m, T)
  \ll k^{-2/3}\sumn_{C\in\mathscr C_3}{c_1^{1/2}c_2^{-1}t_{22}^{1/2}}
  \ll k^{-2/3}\det(T)^{1/2}
.\end{align}
Combining all these cases concludes the proof.
\end{proof}
Let $\basis[2]$ be an orthonormal basis for $M_{K}(\Sp{\z})$.
The Poincar\'e series also satisfy the Petersson trace formula
\begin{equation}
   a_{P_T}(Q) = \frac{(4\pi)^{\frac{7}{2}-2k}\Gamma(k-\frac{3}{2})\Gamma(k-2)}
                     {\det(T)^{k-3/2}}
                \sumn_{F\in\basis[2]}{a_F(Q)\overline{a_F(T)}}
.\dn\end{equation}
Now using this and \eqref{eq:poincare_fourier_bound}, we get
\begin{align} 
  p(T) =\sumn_{F\in\basis[2]}\left|a_F(T)\right|^2
  \ll \frac {(4\pi)^{2k} \det(T)^{k}} {\Gamma(k-2)\Gamma(k-3/2)}
  \left(\det(T)^{-\frac{3}{2}}+k^{-\frac{2}{3}}\det(T)^{-\frac{1}{2}+\epsilon}\right) \dn\\
  \sqrt{p(T)} \ll \frac
  {(4\pi)^{k} \det(T)^{\frac{k}{2}}}
  {\sqrt{\Gamma(k-2)\Gamma(k-3/2)}}
  \left(\det(T)^{-\frac{3}{4}}+k^{-\frac{1}{3}}\det(T)^{-\frac{1}{4}+\epsilon}\right) \label{n2-fourier-bound}
.\end{align}
This gives $\alpha=7/4$ and $\beta=1/4$ for
\thmref{f-febd} uniformly. This results in the bound
$\oldnorm{F}_{\infty} \ll k^{\frac{7}{2}+\epsilon}$.
However, we will keep note of the two terms in \eqref{n2-fourier-bound} separately. This observation will be crucial
in a more refined argument later in the next section.

\section{The \texorpdfstring{$n=2$}{n2} case} \label{n=2case}
We now put together all we know when $n=2$. \thmref{f-febd} relies
on counting a certain subset $\cy$ of $\Lambda_n$. This counting has been carried out more precisely in
\cite[Lemma 4]{blomer2015size} when $n=2$ by explicitly computing the eigenvalues of elements of $\cy$. We need an even more refined version of the calculation.
The results in this section establish a bound for growth/decay of $\log_k(\bkzz)$ with respect to $\log_k(\det(Y))$. It is easier to linearize the calculations by working with logarithms. As can be seen from below, most of our resulting expressions are piece-wise linear polynomials.

\begin{lem} \label{l-n2counting}
  Let $\cy$ be as defined in \eqref{cydef}. If $\eta:=\log_k(\det(Y))$
  and $n=2$, then $\#\cy\ll k^{w_1(\eta)}$ where
  \begin{equation}
    w_1(\eta) =
    \epsilon+
    \begin{cases}
    \frac{3}{2}-\frac{3\eta}{2} & (0\leq\eta\leq\frac{1}{2})\\
    1-\frac{\eta}{2}            & (\frac{1}{2}\leq\eta\leq 1)\\
    \frac{3}{2}-\eta            & (1\leq\eta\leq\frac{3}{2})\\
    0                           & (\frac{3}{2}\leq\eta\leq 2)
    \end{cases}
  .\end{equation}
\end{lem}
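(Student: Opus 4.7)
The plan is to refine the counting of \lemref{count-bd2} by exploiting the fact that for $n=2$ the eigenvalue localization defining $\cy$ can be replaced by two explicit scalar constraints on $T$:
\[
\tr(TY) = \frac{k}{2\pi} + O(k^{1/2+\epsilon}), \qquad \det(T)\det(Y) = \frac{k^2}{16\pi^2} + O(k^{3/2+\epsilon}).
\]
The second constraint, combined with $4\det(T) \in \mathbb Z_{>0}$, admits $O(k^{3/2-\eta+\epsilon})$ values of $\det(T)$ when $\eta \le 3/2$, only $O(k^\epsilon)$ values when $3/2 \le \eta \le 2$, and none when $\eta > 2$ once $k$ is large (so $\cy = \emptyset$). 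This already explains the last two pieces of $w_1(\eta)$.

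For the intermediate range I would write $T = \psmb a & b/2 \\ b/2 & c \psme$ with $a,c \in \mathbb Z_{>0}$, $b \in \mathbb Z$, and reduce $Y$ to Minkowski form. Since $|y_{12}| \le y_1/2$ only perturbs the trace by a controlled amount, it essentially suffices to work with $Y = \dia(y_1,y_2)$, $y_1 \le y_2$. Set $y_i = k^{\eta_i}$ with $\eta_1 + \eta_2 = \eta$. The trace constraint together with $c \ge 1$ forces $y_2 \ll k$, i.e. $\eta_2 \le 1$, which combined with $0 \le \eta_1 \le \eta_2$ parametrizes the admissible diagonal $Y$. In the diagonal case the entries of $T - (k/4\pi)Y^{-1} = Y^{-1/2} K Y^{-1/2}$ (with $K = Y^{1/2}TY^{1/2} - (k/4\pi)1_2$ of operator norm $\ll \kappa := k^{1/2+\epsilon}$) are simply $K_{11}/y_1$, $K_{12}/\sqrt{y_1 y_2}$, $K_{22}/y_2$, giving windows for $a, b, c$ of lengths $\kappa/y_1,\ \kappa/\sqrt{y_1 y_2},\ \kappa/y_2$ — strictly sharper than the uniform bound $\kappa (y_i y_j)^{-1/4}$ used in the proof of \lemref{count-bd2}.

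Maximising the resulting lattice count over admissible $(\eta_1, \eta_2)$ in each range of $\eta$ produces the piecewise structure: for $\eta \in [0,1/2]$ all three windows exceed $1$ and the product is $k^{3/2-3\eta/2+\epsilon}$ (which matches the three-dimensional volume of the operator-norm slab $\{\|K\|_{\mathrm{op}} \le \kappa\}$ divided by $\det(Y)^{3/2}$); for $\eta \in [1/2,1]$ the extremal configuration $y_1 \asymp 1,\ y_2 \asymp k^\eta$ freezes the $c$-window at $O(1)$ while $a, b$ retain windows of sizes $k^{1/2+\epsilon}$ and $k^{1/2-\eta/2+\epsilon}$, giving $k^{1-\eta/2+\epsilon}$; for $\eta \in [1,3/2]$ both $b$ and $c$ are pinned to $O(1)$ values, so for each fixed $(b,c)$ and each admissible $D = \det(T)$ the equation $4ac = b^2 + 4D$ determines $a$ uniquely, and summing over the $O(k^{3/2-\eta+\epsilon})$ determinant values yields $k^{3/2-\eta+\epsilon}$.

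The main obstacle is the middle regime $\eta \in [1/2,3/2]$, where the three constraints — entry windows, trace slack, and the Diophantine relation $4ac - b^2 = 4D$ — must be combined delicately, and one has to verify that reinstating the off-diagonal term $b y_{12}$ in the exact trace equation does not spoil the analysis. This is where the sharper entry bounds available for $n=2$ are indispensable, and where the transitions at $\eta = 1/2$ and $\eta = 1$ correspond precisely to the successive freezing of the off-diagonal and the large-diagonal windows.
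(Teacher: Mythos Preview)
Your approach is correct and amounts to rederiving from scratch the four-term bound that the paper simply quotes from \cite[Lemma~4]{blomer2015size}. The paper's proof is therefore much shorter: it records Blomer's inequality
\[
\#\cy \ll k^\epsilon\Bigl(\tfrac{k^{3/2}}{\det(Y)^{3/2}} + \tfrac{k}{y_1^{3/2}y_2^{1/2}} + \tfrac{k^{1/2}}{y_1} + 1\Bigr),
\]
which is precisely the expansion of your product $(\kappa/y_1+1)(\kappa/\sqrt{y_1y_2}+1)(\kappa/y_2+1)$ after discarding the terms dominated by $y_1\le y_2$, and then substitutes the constraints $y_1\gg\max(1,k^{\eta-1})$ and $y_1y_2\asymp k^\eta$. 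Your search for the extremal configuration $(\eta_1,\eta_2)$ and the paper's direct substitution are the same casework in different clothing. Your appeal to the determinant relation $4ac-b^2=4D$ in the range $\eta\in[1,3/2]$ is a valid alternative but is in fact redundant: once $b$ and $c$ are frozen, the $a$-window $\kappa/y_1\le k^{3/2-\eta+\epsilon}$ already delivers the right count without invoking $D$. One small slip: saying the determinant count alone ``explains the last two pieces'' overstates matters for $\eta\in[3/2,2]$, since pinning down the $O(k^\epsilon)$ admissible values of $D$ does not by itself bound the number of $T$; you still need (and later use) that all three entry windows are $O(1)$ there.
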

\begin{proof}
  According to the final equation in the proof of \cite[Lemma 4]{blomer2015size}
  \begin{equation} \label{blomers-count}
    \#\cy\ll k^{\epsilon}
    \left(
      \frac{k^{3/2}}{\det(Y)^{3/2}}
      +\frac{k}{y_1^{3/2}y_2^{1/2}}
      +\frac{k^{1/2}}{y_1}
      +1
    \right)
  .\end{equation}
  Here $y_1\leq y_2$ are the diagonal entries of $Y$. For a Minkowski reduced matrix, $\det(Y)\asymp y_1y_2$. They are individually
  bounded above by $O(k)$ (because of \lemref{expdecay-cutoff}). So we can bound $y_1\gg 1$ when $\eta<1$ and, $y_1\gg k^{\eta-1}$ for $1\le \eta \le 2$.
  Using this, we can write
  \begin{equation}
    \#\cy\ll k^{\epsilon}\times
    \begin{cases}
    \left(
      k^{\frac{3}{2}(1-\eta)}
      +k^{1-\frac{\eta}{2}}
      +k^{\frac{1}{2}}
      +1
    \right) & \eta\leq 1\\
    \left(
      k^{\frac{3}{2}(1-\eta)}
      +k^{2-\frac{3\eta}{2}}
      +k^{\frac{3}{2}-\eta}
      +1
    \right) & 1\leq\eta\leq 2
    \end{cases}
  .\end{equation}
  Now, checking which of the four terms is largest for a value of $\eta$ results in the lemma statement.
\end{proof}

\begin{lem} \label{l-n2fourierbound}
  Let $\eta:=\log_k(\det(Y))$. We have
  $\sqrt{\bkzz}\ll k^{w_2(\eta)}$ where
  \begin{equation}
    w_2(\eta) =
    \epsilon+
    \begin{cases}
    \frac{35}{12}-\frac{5\eta}{4} & (0\leq\eta\leq\frac{1}{2})\\
    \frac{29}{12}-\frac{\eta}{4}  & (\frac{1}{2}\leq\eta\leq 1)\\
    \frac{35}{12}-\frac{3\eta}{4} & (1\leq\eta\leq\frac{4}{3})\\
    \frac{9}{4}-\frac{\eta}{4}    & (\frac{4}{3}\leq\eta\leq\frac{3}{2})\\
    \frac{3}{4}+\frac{3\eta}{4}   & (\frac{3}{2}\leq\eta\leq 2)
    \end{cases}
  .\end{equation}
\end{lem}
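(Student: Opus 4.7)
The plan is to adapt the argument of \propref{p-febd} for the case $n=2$ using two refinements specific to this setting: the sharp count $\#\cy\ll k^{w_1(\eta)+\epsilon}$ from \lemref{l-n2counting}, and the two-term bound on $\sqrt{p(T)}$ supplied by \eqref{n2-fourier-bound}. The crucial point, as already emphasised in the introduction, is that the two summands of \eqref{n2-fourier-bound} must be carried through the estimate separately; merging them prematurely would collapse the piecewise-linear exponent $w_2$ into the cruder $k^{7/2+\epsilon}$ bound noted at the end of Section~\ref{n2-fouriercoeffs}.

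Starting from \lemref{bkconnect}, $\sqrt{\bkzz}\le q_k(Y)$, so it suffices to bound $q_k(Y)$. Rewriting \eqref{n2-fourier-bound} in the shape of the ansatz \eqref{fc1}, via Stirling's formula $\sqrt{\Gamma(k-2)\Gamma(k-3/2)}\asymp \Gamma(k)k^{-7/4}$, exhibits $\sqrt{p(T)}\ll f_1(T)+f_2(T)$ corresponding to the admissible pairs
\[
(\alpha_1,\beta_1)=(7/4,\,3/4), \qquad (\alpha_2,\beta_2)=(17/12,\,1/4-\epsilon).
\]
Accordingly $q_k(Y)\le q_k^{(1)}(Y)+q_k^{(2)}(Y)$, one sum per term.

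Next, I would repeat the localisation step of the proof of \propref{p-febd} for each $q_k^{(i)}$ separately. The sub-exponential decay bound of \lemref{lem:expdecay} is insensitive to the pair $(\alpha,\beta)$, so the sum effectively reduces to $T\in\cy$. For such $T$ we have $\prod_j m(\lambda_j)=O(1)$ and $\det(T)\asymp k^2/\det(Y)=k^{2-\eta}$, giving
\[
q_k^{(i)}(Y)\ll \#\cy\cdot k^{1/2+\alpha_i-\beta_i(2-\eta)+\epsilon}.
\]
A short arithmetic check produces $q_k^{(1)}(Y)\ll\#\cy\cdot k^{3/4+3\eta/4}$ and $q_k^{(2)}(Y)\ll\#\cy\cdot k^{17/12+\eta/4+\epsilon}$; the two exponents coincide at $\eta=4/3$, with term~2 dominating for $\eta<4/3$ and term~1 for $\eta>4/3$.

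Substituting $\#\cy\ll k^{w_1(\eta)+\epsilon}$ from \lemref{l-n2counting} and using $\sqrt{\bkzz}\le q_k^{(1)}+q_k^{(2)}\ll\max(q_k^{(1)},q_k^{(2)})$, the exponent of $k$ becomes $w_1(\eta)+\max(3/4+3\eta/4,\,17/12+\eta/4)+\epsilon$. The last step is a five-way case check: the three breakpoints of $w_1$ (at $\eta=1/2,1,3/2$) together with the crossover $\eta=4/3$ subdivide $[0,2]$ into exactly five intervals, and adding the dominant contribution to $w_1(\eta)$ on each produces precisely the five pieces of $w_2(\eta)$; one verifies continuity at $\eta=1/2,1,4/3,3/2$ as a sanity check. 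I do not foresee any genuine obstacle, as the serious work has been done in \eqref{n2-fourier-bound}, \lemref{l-n2counting}, and \propref{p-febd}. The only subtlety to watch is that the $-\epsilon$ hidden inside $\beta_2=1/4-\epsilon$ propagates into an additive $O(\epsilon)$ in the final exponent (harmless since $\eta$ is bounded), and that the contribution from $T\notin\cy$ is absorbed into the $\epsilon$ via \lemref{lem:expdecay}.
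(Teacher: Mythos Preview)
Your proposal is correct and follows essentially the same route as the paper: split $\sqrt{p(T)}$ into the two pieces from \eqref{n2-fourier-bound} with the pairs $(\alpha_1,\beta_1)=(7/4,3/4)$ and $(\alpha_2,\beta_2)=(17/12,1/4-\epsilon)$, apply the localisation of \propref{p-febd} to reduce to $\#\cy\cdot k^{1/2+\alpha_i-\beta_i(2-\eta)}$, insert $w_1(\eta)$ from \lemref{l-n2counting}, identify the crossover at $\eta=4/3$, and read off the five cases. The paper's proof is organised identically, and your arithmetic and handling of the $\epsilon$'s match.
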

\begin{proof}
  We utilise the fact that we have bounds on the Fourier coefficients of the form
  \begin{equation} \label{doubleab}
    p(T)\ll \frac{(4\pi)^k\det(T)^{k/2}}{\Gamma(k)}
    \left(k^{\alpha_1}\det(T)^{-\beta_1} + k^{\alpha_2}\det(T)^{-\beta_2}\right)
  .\end{equation}
  We define $p_1(T), p_2(T)$ such that $p_j(T)$ satisfies \eqref{fc1} for $(\alpha_j, \beta_j)$.
  We also define $q_k, q_{k, 1}$ and $q_{k, 2}$ from \eqref{qkdef} using $p(T), p_1(T)$ and $p_2(T)$ respectively.
  Then clearly $|q_k| < |q_{k, 1}| + |q_{k, 2}|$.
  Now we can apply \propref{p-febd} on the two $q_{k, j}$, but holding back the counting argument (cf. \eqref{cysum}), to arrive at
  \begin{align}
    q_k(Y)
    &\ll \#\cy k^{\frac{1}{2}}
    \left(k^{\alpha_1-2\beta_1}\det(Y)^{\beta_1}
         +k^{\alpha_2-2\beta_2}\det(Y)^{\beta_2}
    \right)\n\\
    & = \#\cy k^{\frac{1}{2}}
    \left(k^{\alpha_1-2\beta_1+\beta_1\eta}
         +k^{\alpha_2-2\beta_2+\beta_2\eta}
    \right)
  .\end{align}
  From \lemref{bkconnect}, $\sqrt{\bkzz}$ is bounded by $q_k(Y)$.
  From \eqref{n2-fourier-bound}, the expression \eqref{doubleab} is satisfied for
  $(\alpha_1, \beta_1)=(7/4, 3/4)$ and $(\alpha_2, \beta_2)=(17/12, 1/4-\epsilon)$.
  Thus we have
  \begin{equation}
    \sqrt{\bkzz}
    \ll \#\cy k^{\frac{1}{2}}
    \left(k^{\frac{1}{4}+\frac{3\eta}{4}}
         +k^{\frac{11}{12}+\frac{\eta}{4}+\epsilon }
    \right)
  .\end{equation}
  The second term is worse when $\eta\leq 4/3$ and otherwise, the first term
  is worse. We expand out the bound on $\#\cy$ from \lemref{l-n2counting}.
  These together give us the five cases in the lemma statement.
\end{proof}

\begin{thm} \label{9/4}
  For $n=2$, we have
  \begin{equation}
    \bkzz \ll_{\epsilon} k^{\frac{9}{2}+\epsilon}
  \end{equation}
\end{thm}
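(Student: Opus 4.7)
The plan is to combine the Fourier-side estimate Lemma \ref{l-n2fourierbound} with the Bergman kernel bound in Theorem \ref{bkbd}, splitting the Siegel fundamental domain $\fundamentalDomain$ (for $n=2$) into two regimes according to the size of $\eta := \log_k(\det Y)$ for $Z=X+iY \in \fundamentalDomain$. First, using $\sqrt{\bkzz} \le q_k(Y)$ (Lemma \ref{bkconnect}) together with Lemma \ref{expdecay-cutoff}, we may dispose of $\eta > 2$ via exponential decay. So it suffices to prove $\bkzz \ll_\epsilon k^{9/2+\epsilon}$ on the slab $\{Z \in \fundamentalDomain : 0 \le \eta \le 2\}$.

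I would pick the threshold $\eta_0 = 2/3$. In the \emph{high-cusp regime} $2/3 \le \eta \le 2$, apply Lemma \ref{l-n2fourierbound}. A direct piecewise inspection of the five pieces of $w_2$ shows that $w_2(\eta) \le 9/4$ on the whole interval, with equality exactly at the endpoints $\eta=2/3$ (second piece $29/12 - \eta/4$) and $\eta=2$ (fifth piece $3/4 + 3\eta/4$) and strictly smaller in between. Squaring yields $\bkzz \ll_\epsilon k^{9/2+\epsilon}$ in this regime.

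In the \emph{low-cusp regime} $0 \le \eta < 2/3$, the Fourier bound is too weak (indeed $w_2(0) = 35/12 > 9/4$), so I turn to Theorem \ref{bkbd}, which for $n=2$ specialises to
\[
\bkzz \ll_\epsilon k^{3} \bigl(\det(Y)/y_1^{2}\bigr)^{3/2+\epsilon} \, \min\bigl\{k^{3/2},\, \det(Y)^{3/4}\bigr\}.
\]
Since $\eta \le 2/3 \le 2$, the minimum equals $\det(Y)^{3/4}$. Using $\det(Y) \asymp y_1 y_2$ for Minkowski-reduced $Y$ and $Y_D \gg_n 1_n$ (so $y_1 \gg 1$), this collapses to
\[
\bkzz \ll_\epsilon k^{3} y_1^{-3/4-\epsilon} y_2^{9/4+\epsilon} \ll_\epsilon k^{3 + 9\eta/4 + \epsilon} \le k^{9/2+\epsilon},
\]
where in the last step we used $y_2 \le \det(Y) \le k^{2/3}$.

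The main obstacle — already resolved in the preparatory lemmas — is that the threshold $\eta_0 = 2/3$ is \emph{forced}: the Fourier and Bergman-kernel bounds agree at exactly $k^{9/2}$ at this value of $\eta$ and nowhere else. Any weaker form of either input would break the argument. In particular, this is precisely why the decision in the proof of Lemma \ref{l-n2fourierbound} to keep the two summands of \eqref{n2-fourier-bound} separate (rather than merging them into a uniform estimate), together with the refined counting of $\cy$ in Lemma \ref{l-n2counting} and the subtle estimate in Theorem \ref{bkbd}, is essential: only their delicate interplay allows the two regimes to meet at exactly the right exponent.
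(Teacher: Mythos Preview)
Your proof is correct and follows essentially the same approach as the paper's: split at $\eta_0 = 2/3$, invoke \thmref{bkbd} (second bound, with $y_1 \gg 1$) for $\eta \le 2/3$, \lemref{l-n2fourierbound} for $2/3 \le \eta \le 2$, and \lemref{expdecay-cutoff} for $\eta > 2$. The paper's version is terser (it writes the Bergman-kernel step directly as $\bkzz \ll k^{3}\det(Y)^{9/4+\epsilon}$ and asserts $w_2(\eta)\le 9/4+\epsilon$ on $[2/3,2]$ without the piecewise verification), but the structure is identical. One tiny quibble: you write ``$y_2 \le \det(Y)$'' where strictly only $y_2 \ll \det(Y)$ follows from $\det(Y)\asymp y_1y_2$ and $y_1\gg 1$; this is harmless for the argument.
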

\begin{proof}
  From \thmref{bkbd}, we have
  \begin{equation}
    \bkzz
    \ll k^{3}\det(Y)^{\frac{9}{4}+\epsilon}
  .\end{equation}
  When $\det(Y)\leq k^{2/3}$, this bound shows
  $\bkzz\ll k^{9/2+\epsilon}$. When $k^{2/3}\leq\det(Y)\leq k^2$,
  from \lemref{l-n2fourierbound}, we get that $\sqrt{\bkzz}\ll k^{9/4+\epsilon}$.
  Finally when $\det(Y)\geq k^2$, \lemref{expdecay-cutoff}
  shows that there is exponential
  decay in the value of $\sqrt{\bkzz}$ with respect to $k$.
  This concludes the proof.
\end{proof}

\section{Bounds using the Bergman Kernel} \label{bksec1}
  From \cite[eqn. 6]{cogdell2011bergman},
  for an orthonormal basis $\bnk=\{F_{k,j} \}_j$ for Siegel Cusp forms of weight $k$, we have for $Z \in \hn$
  \begin{equation}
    \bkzz
    = \sum_{j}{|F_{k, j}(Z)|^2\det(Y)^k}
    = B_k(Z,Z)|\det(Y)|^k
    = 2^{-1}\ank R_k(Z),
  \end{equation} 
where we have put
 \[ R_k(Z):=    \sum_{\gamma\in\Gamma_n}{h_\gamma(Z)^k}; \q
 h_\gamma(Z)
      = \frac{\det(Y)}
         {\det\left(\frac{Z-\gamma \overline{Z}}{2i}\right)
          \det(C\overline{Z}+D)
         }
  \]
  and $\ank$ defined below satisfies
  \begin{equation} \label{ank}
    \ank
    := 2^{-n(n+3)/2}\pi^{-n(n+1)/2}
      \prod_{v=1}^{n}{\frac{\Gamma(k-\frac{v-1}{2})}{\Gamma(k-\frac{v+n}{2})}}
      \ll_n k^{\frac{n(n+1)}{2}}.
  \end{equation} 
  
  Let us write $\gamma\in\Gamma_n$ as
  $\gamma=\gamma_\infty\gamma_U\widetilde\gamma$ where
  \[\gamma_\infty=\begin{pmatrix}1_n & T\\ 0 & 1_n\end{pmatrix};
  \gamma_U=\begin{pmatrix}U & 0\\ 0 & (\tp{U})^{-1}\end{pmatrix};
  T\in \Sym \z, U\in \GL \z
  .\]
  For both $\gamma_\infty$ and $\gamma_U$, we have (with $j(\gamma,Z)=CZ+D$)
  \[ \det(j(\gamma_\infty, Z))=\det(j(\gamma_U, Z))=1
  .\]
  Thus we have
  \[ h_\gamma(Z)
       = \frac{\det(Y)}
         {\det\left(\frac{Z-U \overline{ \widetilde\gamma \lan Z \ran }\tp{U}-T}{2i}\right)
          \det(j(\widetilde\gamma, Z))
         }
  \]
  and so
  \begin{equation} \label{eq:h_gamma_breakdown}
    \sum_{\gamma\in\Gamma_n}h_\gamma(Z)^k
    =(2i)^{nk}\det(Y)^k\sum_{\{ C, D \}} \det(CZ+D)^{-k}
      \sum_{U} \sum_{T}
      \det\left(Z-U \overline{ \widetilde\gamma \lan Z \ran} \tp{U}-T\right)^{-k};
  \end{equation}
 where in the above sum, $T$ varies over $\Sym \z$, $U$ varies over $\GL \z$ and $\{C,D\}$ vary over inequivalent coprime symmetric pairs in degree $n$ (see section~\ref{prelim}).
 Next, we shall introduce a couple of lemmas to help us estimate this nested summation.

\subsection{Bounding \texorpdfstring{$\bkzz$}{bkz} using Lipschitz formula} \label{lipsec1}

\begin{lem} \label{lipz}
Let $k \ge n+1$ and $Z \in \hn$ be such that $Y \ge c1_n$ for some $c$ depending only on $n$. Then 
  \begin{equation} \label{lipf}
    \summ_{S\in \Sym \z} |\det(Z+S)|^{-k}
  \ll_n \det(Y)^{-k+(n+1)/2}.
  \end{equation}
\end{lem}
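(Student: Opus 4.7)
The plan is to compare the sum directly to an integral on $\Sym{\R}$ via a perturbation argument, rather than going through a Fourier expansion as for \lemref{lipfo}. The key observation is that under the hypothesis $Y \ge c\,1_n$ with $c=c_n$ chosen sufficiently large in terms of $n$, the function $X\mapsto|\det(Z+X)|^{-k}$ on $\Sym{\R}$ varies by only a multiplicative factor $O_n(1)$ over any unit cell of $\Sym{\z}$. This lets me trade the lattice sum for the integral at cost $O_n(1)$, and then evaluate the integral in closed form up to an $n$-dependent constant.

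Concretely, for $S\in\Sym{\z}$ and $t\in\Sym{\R}$ with $\|t\|_{\mathrm{op}}=O(n)$ (covering the closure of a unit cell), I would factor
\[ \det(Z+S+t)=\det(Z+S)\cdot\det(1_n+(Z+S)^{-1}t). \]
The hypothesis $\im(Z+S)=Y\ge c\,1_n$ gives $\|(Z+S)^{-1}\|_{\mathrm{op}}\le 1/c$, since for a complex symmetric matrix with positive definite imaginary part, writing $Z+S=Y^{1/2}(Y^{-1/2}(X+S)Y^{-1/2}+i\,1_n)Y^{1/2}$ reduces this to the obvious bound on $(W+i\,1_n)^{-1}$ for real symmetric $W$. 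Choosing $c_n$ large enough makes $\|(Z+S)^{-1}t\|_{\mathrm{op}}\le 1/2$, so $|\det(1_n+(Z+S)^{-1}t)|$ is pinched between two positive constants depending only on $n$; hence $|\det(Z+S+t)|^{-k}\asymp_n|\det(Z+S)|^{-k}$ uniformly over the unit cell about $S$. Integrating this over the cell and summing over $S\in\Sym{\z}$ yields
\[ \sum_{S\in\Sym{\z}}|\det(Z+S)|^{-k}\ll_n \int_{\Sym{\R}}|\det(Z+X)|^{-k}\,dX. \]

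Finally, one computes this integral. Shift $X\mapsto X-\re(Z)$ to reduce to $\int|\det(X+iY)|^{-k}\,dX$, and then substitute $X=Y^{1/2}X'Y^{1/2}$; using that $X'\mapsto AX'\tp{A}$ on $\Sym{\R}$ has Jacobian $\det(A)^{n+1}$, the Jacobian here is $\det(Y)^{(n+1)/2}$ and $\det(X+iY)=\det(Y)\det(X'+i\,1_n)$. So the integral becomes $\det(Y)^{-k+(n+1)/2}\int|\det(X'+i\,1_n)|^{-k}\,dX'=\det(Y)^{-k+(n+1)/2}\int\det(1_n+X'^2)^{-k/2}\,dX'$. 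The last integral is pointwise decreasing in $k$, so for $k\ge n+1$ it is bounded by its value at $k=n+1$, which is a Selberg-type integral on $\Sym{\R}$ converging for $k>n$ and depending only on $n$ (evaluable via Weyl integration on the eigenvalues of $X'$). The main subtlety is the choice of $c_n$ in the perturbation step, which must be large enough in terms of $n$ to control the determinant distortion uniformly over every unit cell of $\Sym{\z}$; once this is in place the rest is a clean change of variables and a finiteness check.
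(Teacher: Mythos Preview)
Your approach has a genuine gap at the perturbation step. You correctly bound $|\det(1_n+(Z+S)^{-1}t)|$ between two positive constants $c_1,c_2$ depending only on $n$ (given $\|(Z+S)^{-1}t\|_{\mathrm{op}}\le 1/2$). But then
\[
\frac{|\det(Z+S+t)|^{-k}}{|\det(Z+S)|^{-k}}=|\det(1_n+(Z+S)^{-1}t)|^{-k}
\]
lies between $c_2^{-k}$ and $c_1^{-k}$, which are \emph{exponential} in $k$, not $O_n(1)$. So the claimed $\asymp_n$ fails: your sum-to-integral comparison produces $\sum_S|\det(Z+S)|^{-k}\ll C_n^{\,k}\int|\det(Z+X)|^{-k}\,dX$ with $C_n>1$, and the final bound you would get is $C_n^{\,k}\det(Y)^{-k+(n+1)/2}$. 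This is precisely the pitfall the paper warns about just before the proof: a direct term-by-term comparison yields constants exponential in $k$ (as in \cite{braun1939konvergenz}, \cite{krieg1986modular}).

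The paper's fix is to split the exponent: write $k=k_1+k_2$ with $k_1=n+1$ fixed, use the elementary inequality $|\det(Z+S)|\ge\det(Y)$ (valid for all $S$) to pull out $\det(Y)^{-k_2}$ from the sum, and then apply the sum-to-integral comparison \emph{only at the fixed exponent $k_1$}. The resulting factor $\exp(nk_1\tr(Y^{-1}))$ is now $O_n(1)$ since $k_1$ depends only on $n$ and $\tr(Y^{-1})\le n/c$. Your integral computation and the change of variables $X=Y^{1/2}X'Y^{1/2}$ are fine and are essentially what the paper invokes; what you are missing is this splitting device to tame the $k$-dependence.
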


Note the subtle point here, that it is not sufficient to estimate the l.h.s. of \eqref{lipf} term by term as it is. This would entail bad bounds in terms of the exponent $k$ (cf. \cite[p.~393-395]{braun1939konvergenz}, also see \cite[Lemma~V.1.2, Corollary~V.1.5]{krieg1986modular}, where the relevant constants are exponential in $k$), but we can avoid this by separating the exponent. We will not use this lemma in this subsection but in section~\ref{n=1exam} (cf.~\eqref{bkzn1bd}). The argument given below using the comparison of higher dimensional sum and integral may be useful in other situations.

\begin{proof}
Let us write $k=k_1+k_2$ where $k_1$ will depend only on $n$. First, we note that for any $Z \in \hn$ and $R \in \Sym \R$, $|\det(Z+R)| \ge \det(Y)$, where $Y=\Im(Z)$. This follows from the computation
\begin{equation*}
|\det(Z+R)| = |\det(iY+(X+R))| \geq \det(Y) |\det(1_n - iY^{-1/2}(X+R)Y^{-1/2})| .
\end{equation*} 
From the above discussion, we can write
\begin{align} \label{split1}
\sumn_{S\in \Sym \z}|\det(Z+S)|^{-k} \le \det(Y)^{-k_2} \sumn_{S\in \Sym \z} |\det(Z+S)|^{-k_1} .
\end{align}
Quoting \cite[p.~393-395]{braun1939konvergenz} we have
\begin{align} \label{sumint}
\sumn_{S\in \Sym \z}|\det(Z+S)|^{-k_1} &\le \expn{n k_1 \tr(Y^{-1})} \int_{\mc S = \mc S'}| \det(Z+ \mc S)|^{-k_1} d \mc  S \\
& \ll_{n,k_1} \det(Y)^{(n+1)/2 - k_1},
\end{align}
provided $k_1 \ge n+1$ and that $Y \ge c 1_n$ for some $c$ depending only on $n$. Combining \eqref{split1} and \eqref{sumint} we get upon choosing $k_1=n+1$ that
\begin{align} \label{splitbd}
\sumn_{S\in \Sym \z}|\det(Z+S)|^{-k} \ll_n \det(Y)^{-k+k_1} \cdot \det(Y)^{(n+1)/2 - k_1}=\det(Y)^{-k+(n+1)/2}.
\end{align}
This finishes the proof of the 1emma.
\end{proof}

We now give a couple of other bounds for the sum in \lemref{lipz} using the Fourier expansion of the function $\sum_{S\in \Sym \z}\det(Z+S)^{-k}$.

\begin{lem} \label{lipfo}
For $Z \in \hn$ we have the bounds
\begin{align}
  \sumn_{S\in \Sym \z}\det(Z+S)^{-k} \ll_n
  \begin{cases}
    k^{\frac{n(n+1)}{4}} \det(Y)^{-k} \q \q &(Z \in \hn),\\
    k^{\epsilon}\det(Y)^{-k +\frac{n+1}{4}} \q \q &(Z \in \fn).
  \end{cases}
\end{align}
\end{lem}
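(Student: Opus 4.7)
\textbf{Proof plan for \lemref{lipfo}.} The natural approach is to invoke Siegel's generalization of the Lipschitz formula to convert the sum into a Fourier series indexed by positive (half-integral) symmetric matrices, and then estimate the resulting expression by the same kind of eigenvalue analysis used to handle $q_k(Y)$ in \propref{p-febd}. Concretely, for $k$ sufficiently large (depending on $n$),
\begin{equation*}
\sum_{S \in \Sym{\z}} \det(Z+S)^{-k}
= \frac{(-2\pi i)^{nk}}{\Gamma_n(k)} \sum_{T > 0} \det(T)^{k - (n+1)/2} e^{2\pi i \tr(TZ)},
\end{equation*}
where the inner sum runs over positive definite half-integral symmetric $T$ and $\Gamma_n(k) = \pi^{n(n-1)/4} \prod_{j=0}^{n-1} \Gamma(k - j/2)$. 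Taking absolute values turns the estimate into a sum of the same shape as $q_k(Y)$, the only difference being that the $\det(T)^{k/2-\beta}$ factor is replaced by $\det(T)^{k-(n+1)/2}$ and the normalising $\Gamma$-factor is now $\Gamma_n(k)$ rather than $\Gamma(k)^{n/2}$.

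Before estimating, I would reduce to the case when $Y$ is Minkowski reduced. This is legitimate because the substitution $S \mapsto USU^t$ (with $U \in \GL{\z}$) is a bijection of $\Sym{\z}$ and $\det(U)^{\pm 2k}=1$, so the whole sum is invariant under $Z \mapsto UZ\tp{U}$. After this reduction I would apply the spectral trick from the proof of \propref{p-febd}: writing the eigenvalues of $TY$ as $\lambda_1,\ldots,\lambda_n$ and pulling out $\det(Y)^{-(k-(n+1)/2)}$, each summand factors as $\prod_j \lambda_j^{k-(n+1)/2} e^{-2\pi\lambda_j}$. The function $x^{k-(n+1)/2} e^{-2\pi x}$ peaks at $x \asymp k/(2\pi)$, so an argument entirely parallel to \lemref{lem:expdecay} shows that the only $T$ contributing non-negligibly are those in the analogue $\cy'$ of $\cy$ (with the centre $k/(4\pi)$ replaced by $k/(2\pi)$).

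Using Stirling we have $\Gamma_n(k) \asymp_n k^{-n(n-1)/4-n/2}(k/e)^{nk}$, hence the coefficient $|(-2\pi i)^{nk}/\Gamma_n(k)|$ contributes $k^{n(n-1)/4+n/2}(2\pi e/k)^{nk}$. Combining this with the peak value of the integrand and the size of $\cy'$, the two claimed bounds are obtained by using the two available counting estimates. For general $Z \in \hn$ (after reducing $Y$), \lemref{count-bd} gives $\#\cy' \ll_n k^{n(n+1)/2}\det(Y)^{-(n+1)/2}$, and a routine multiplication of the exponents yields the first bound $k^{n(n+1)/4}\det(Y)^{-k}$. For $Z \in \fn$ we additionally have $Y \gg 1_n$, so \lemref{count-bd2} applies and gives $\#\cy' \ll_{n,\epsilon} k^{n(n+1)/4+\epsilon}\det(Y)^{-(n+1)/4}$; the same calculation then produces $k^\epsilon \det(Y)^{-k+(n+1)/4}$, sharper by a factor of $k^{n(n+1)/4}\det(Y)^{-(n+1)/4}$.

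The main obstacle is bookkeeping rather than ideas: one needs to carefully track the powers of $k$ that emerge from Stirling applied to $\Gamma_n(k)$, from the maximum of $x^{k-(n+1)/2}e^{-2\pi x}$ (which contributes an extra $k^{-n(n+1)/2}$ beyond the $x^k e^{-2\pi x}$ calculation in \propref{p-febd}), and from the counting lemmas, and check that these combine exactly to yield the stated exponents. A subsidiary point to verify is that the sub-exponential tail coming from $T \notin \cy'$ is absorbed into both bounds; this is automatic by the same argument as in \lemref{lem:expdecay}, but it should be noted. No new analytic input is needed beyond what is already established in section~3.
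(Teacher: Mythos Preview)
Your proposal is correct and follows essentially the same route as the paper: Lipschitz formula, reduction of $Y$, then the eigenvalue/counting analysis of \propref{p-febd}. The paper's presentation is a touch slicker in that it does not redo the analysis inline but instead notes that with $l=2k-n-1$ one has $k-(n+1)/2=l/2$ and the normalising constant matches $(4\pi)^{nl/2}l^{-n(n+2)/4}/\Gamma(l)^{n/2}$, so the absolute Fourier series is literally $q_l(Y)$ with parameters $(\alpha,\beta)=(-n(n+2)/4,0)$ and \propref{p-febd} can be quoted as a black box; but this is only a packaging difference, and your bookkeeping yields the same exponents.
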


\begin{proof}
First of all we can assume that $Y=\Im(Z)$ is reduced, by replacing $Z$ by $Z[U]$ for a suitable $U \in \GL \z$, since the sum over $S$ is invariant under $\GL \z$.

By the Lipschitz formula, which we quote from \cite[Hilfssatz~38]{siegel1935analytic}: for any $Z \in \hn$ and any $k > n(n+1)/2$ 
\begin{equation} \label{lips}
\mc S_{k}:=\sumn_{S \in \Sym \z} \det(Z+S)^{-k} = C_n \frac{e^{- \pi i n k/2}}{\gamma_{n,k}} \sum \nolimits_{T\in\Lambda_n} \det(T)^{k- (n+1)/2} e(TZ),
\end{equation}
where $C_n=(2 \sqrt{\pi})^{-n(n-1)/2}, \gamma_{n,k}= (2 \pi)^{-nk} \prod_{\nu =0}^{n-1} \Gamma(k-\nu/2)$. Asymptotically, we get the following
\begin{align}
\abs{C_n \frac{e^{- \pi i n k/2}}{\gamma_{n,k}}}
\asymp_n \frac{(2\pi)^{nk}k^{\frac{n(n-1)}{4}}}{\Gamma(k)^n}
\asymp_n \frac{ (4\pi)^{nl/2} l^{\frac{-n(n+2)}{4}} }{\Gamma(l)^{n/2}}.
\dn\end{align}
Here we have put $l=2k-n-1$. Thus we have
\begin{equation}
  \abs{\mc S_k \det(Y)^{l/2}}\asymp_n
  \sum_{T\in\Lambda_n} \frac{(4\pi)^{nl/2} l^{\frac{-n(n+2)}{4}} \det(TY)^{l/2} e^{2\pi\tr(TY)}}
  {\Gamma(l)^{n/2}}.
\end{equation}
We now apply \propref{p-febd} to the above with equation with $\alpha=\frac{-n(n+2)}{4}$,
$\beta=0$ and $k$ there being $l$ here. With the notation of \eqref{n/2bd}, the r.h.s of the above display is just $q_l(Y)$. Thus we have
\begin{align}
   \mc S_k \det(Y)^{l/2} \ll_{n,\epsilon}
   \det(Y)^{-(n+1)/2}(k^{\frac{n(n+1)}{4}}+\expn{-c_0k^{-\epsilon}})
\end{align}
which implies that
$\mc S_k \ll_{n} k^{\frac{n(n+1)}{4}}\det(Y)^{-k}$.

If we also have $Y\gg 1_n$, then we can also use \eqref{n/4bd}. This gives us
\begin{align*}
   \mc S_k \det(Y)^{l/2} \ll_{n,\epsilon}
   \det(Y)^{-(n+1)/4}k^{\epsilon}+\det(Y)^{-(n+1)/2}\expn{-c_0k^{-\epsilon}}
\end{align*}    
 which implies that $\mc S_k \ll_{n, \epsilon} k^{\epsilon}\det(Y)^{-k+\frac{n+1}{4}}$.
\end{proof}
This yields the following result.

\begin{lem} \label{lipcor}
Let $Z \in \hn$ and $k \ge n+1$. For $\gamma \in \Gamma_n$, we put $\widetilde Y:=-\Im( \overline{ \gamma \lan Z \ran } )$. Put $\mbb Y_U:= Y+U\widetilde Y\tp{U}$. Then the sum $\sum_{S\in \Sym \z}
    \det\left(Z-U \overline{ \widetilde\gamma \lan Z \ran } \tp{U}-S\right)^{-k}$ is
\begin{align} 
    \ll_{n, \epsilon}
    \begin{cases}
      k^{n(n+1)/4} \det(\mbb Y_U)^{-k} \q \q &(Z \in \hn) \\
      k^{\epsilon}\det(\mbb Y_U)^{-k +\frac{n+1}{4}} \q \q &(Z \in \fn).
    \end{cases}
\end{align}
\end{lem}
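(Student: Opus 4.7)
The plan is to recognise the expression inside the determinant as a point $W \in \hn$ and appeal to \lemref{lipfo}. First I set
\[
W \;:=\; Z - U\,\overline{\widetilde\gamma\langle Z\rangle}\,\tp{U}.
\]
Since $\widetilde\gamma\langle Z\rangle \in \hn$ and $\Im(\overline{M}) = -\Im(M)$ for any complex symmetric $M$, a direct computation gives $\Im(W) = Y + U\widetilde Y\,\tp{U} = \mathbb{Y}_U > 0$, so that $W \in \hn$. Because $S \mapsto -S$ is a bijection on $\Sym \z$, the sum under consideration equals $\sum_{S \in \Sym \z} \det(W+S)^{-k}$, which is exactly the sum treated by \lemref{lipfo} with $W$ in place of $Z$. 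The first clause of that lemma, valid on all of $\hn$, immediately yields the claimed bound $\ll_n k^{n(n+1)/4}\det(\mathbb{Y}_U)^{-k}$.

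For the sharper $Z \in \fn$ case, the plan is to arrange that the $\fn$-branch of \lemref{lipfo} applies to $W$: what is really needed in the proof of that branch is that $\Im W$ be Minkowski reduced and $\gg_n 1_n$, as this is the hypothesis of \eqref{n/4bd} in \propref{p-febd}. I would exploit the invariance of $\sum_S \det(W+S)^{-k}$ under $W \mapsto \tp{V}WV$ for $V \in \GL\z$ (which follows from the bijection $S \mapsto \tp{V^{-1}} S V^{-1}$ of $\Sym \z$, using that $V^{-1} \in \GL\z$), and pick $V$ so that $\tp{V}\mathbb{Y}_U V$ is Minkowski reduced; note that $\det(\tp{V}\mathbb{Y}_U V) = \det(\mathbb{Y}_U)$. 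Now $Z \in \fn$ implies $Y \gg_n 1_n$, hence $\mathbb{Y}_U \ge Y \gg_n 1_n$; and for each column $V_j$ of the integer matrix $V$ the diagonal entry satisfies $\tp{V_j}\mathbb{Y}_U V_j \gg_n \|V_j\|^2 \ge 1$. Combined with \eqref{reduct-implication} this yields $\tp{V}\mathbb{Y}_U V \gg_n 1_n$, and the invoked branch of \lemref{lipfo} then supplies the bound $k^{\epsilon}\det(\mathbb{Y}_U)^{-k + (n+1)/4}$.

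I do not anticipate a genuine obstacle here; the only step worth a moment of care is that the $\GL\z$-reduction of $\mathbb{Y}_U$ preserves the $\gg_n 1_n$ bound, which reduces to the elementary fact that any nonzero integer vector has Euclidean norm at least $1$. Everything else is a direct change of variables followed by an invocation of \lemref{lipfo}.
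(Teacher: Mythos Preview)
Your proof is correct and follows exactly the paper's approach: set $W:=Z-U\overline{\widetilde\gamma\langle Z\rangle}\tp{U}$, observe $\Im(W)=\mathbb{Y}_U\ge Y$, and apply \lemref{lipfo}. You are in fact more careful than the paper in one respect: you explicitly verify that the Minkowski reduction of $\mathbb{Y}_U$ remains $\gg_n 1_n$ (via the integer-vector argument), whereas the paper simply asserts that $\Im(W)\ge Y\gg_n 1_n$ suffices for the $\fn$-branch of \lemref{lipfo}, leaving the reduction step implicit.
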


\begin{proof}
    We apply \lemref{lipfo} to $W:=Z-U \overline{ \widetilde\gamma \lan Z \ran } \tp{U}$. Notice that 
$\Im(W) = Y+ U \widetilde Y \tp{U} \ge Y$. Since $Z \in \fundamentalDomain$, $Y \gg_n 1$, and thus the second bound of the lemma follows. The first bound is valid for all $Z\in\hn$.
\end{proof}

\begin{lem} \label{headache}
  For $k > (n+1)^2, n \ge 2$ and $Y, \widetilde Y \in \Sym \R^+$, with $\ymin$ as the smallest eigenvalue of $Y$, we have for any $\epsilon>0$:
  \begin{equation}
  \summ_{U\in \GL{\z}}
      {\det(Y+U\widetilde Y\tp{U})^{-k}}
  \ll_{n,\epsilon}  2^{-nk}\det(Y)^{-\frac{k}{2}}\det(\ty)^{\frac{-k+n+1}{2}+\epsilon}\ymin^{-\frac{n(n+1)}{2}-n\epsilon}.
  \end{equation}
\end{lem}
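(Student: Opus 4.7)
The plan is to split the exponent $k = (k-k_0) + k_0$ for a fixed constant $k_0 = k_0(n,\epsilon)$, apply an elementary pointwise bound to the $(k-k_0)$-th power, and treat the remaining fixed-exponent sum separately.

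Writing $A := Y^{-1/2}U\ty^{1/2}$, one has
\[
 Y + U\ty\tp{U} = Y^{1/2}(1_n + A\tp{A})Y^{1/2}, \qquad |\det A|^2 = \det(\ty)/\det(Y).
\]
If $\sigma_1,\ldots,\sigma_n$ are the singular values of $A$, the elementary inequality $1+\sigma_i^2 \ge 2\sigma_i$ combined with $|\det U|=1$ gives the pointwise lower bound $\det(Y+U\ty\tp{U}) \ge 2^n \det(Y)^{1/2}\det(\ty)^{1/2}$. I would choose $k_0 := n+1+n\epsilon$; the hypothesis $k > (n+1)^2$ then ensures a comfortable positive margin $k - k_0 \ge n(n+1) - n\epsilon$, so that raising the above display to the $(k-k_0)$-th power and pulling the resulting constant out of the sum reduces the lemma to
\[
 \sum_{U\in\GL{\z}}\det(Y+U\ty\tp{U})^{-k_0} \ll_{n,\epsilon} 2^{-nk_0}\det(Y)^{-k_0/2}\det(\ty)^{(n+1-k_0)/2+\epsilon}\ymin^{-\frac{n(n+1)}{2}-n\epsilon}.
\]

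For this reduced estimate I would use the invariance of the sum under $\ty\mapsto V\ty\tp{V}$, $U\mapsto UV^{-1}$ (for $V \in \GL{\z}$) to assume $\ty$ is Minkowski-reduced, so that its diagonal controls its eigenvalues up to a constant depending on $n$. The substitution $B := Y^{-1/2}U\ty^{1/2}$ then realises $\GL{\z}$ as a discrete subset $\mc{L}$ of the level hypersurface $\{|\det B|^2 = \det(\ty)/\det(Y)\} \subset \m{\R}$, and each summand becomes $\det(Y)^{-k_0}\det(1_n + B\tp{B})^{-k_0}$. A dyadic decomposition of $B$ according to its operator norm, combined with a lattice-point count in each annulus and the fact that the matrix-beta integral $\int_{\m{\R}}\det(1_n + B\tp{B})^{-k_0}dB$ is finite once $k_0 > n/2$ (computed via singular-value decomposition), should yield the required bound.

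The main obstacle is the precise appearance of the $\ymin^{-n(n+1)/2-n\epsilon}$ factor. Heuristically, when $\ymin$ is small the substitution $U\mapsto B$ stretches $\GL{\z}$ by a factor of order $\ymin^{-1}$ in the eigendirection corresponding to $\ymin$; this produces many integer $U$'s whose image $U\ty\tp{U}$ is aligned with that direction, keeping $\det(Y+U\ty\tp{U})$ small. The count of these contributions scales like $\ymin^{-n(n+1)/2}$, matching the real dimension of $\Sym{\R}$. Making this count rigorous — tracking the interaction between the anisotropic stretching of $\mc{L}$ and the symmetric shape of the level set, and handling the boundary of the Minkowski fundamental domain uniformly in $\ty$ via Minkowski's theorem on successive minima — is the technical crux, and is where the $\epsilon$-losses in $\det(\ty)^{\epsilon}$ and $\ymin^{-n\epsilon}$ ultimately originate.
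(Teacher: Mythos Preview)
Your overall plan---split $k=(k-k_0)+k_0$, apply the pointwise bound $\det(Y+U\ty\tp{U})\ge 2^n\det(Y)^{1/2}\det(\ty)^{1/2}$ to the large exponent, and treat the fixed-exponent residual separately---is exactly how the paper begins. The gap is in the residual sum: you propose a dyadic lattice-point count on the hypersurface $\{|\det B|^2=\det(\ty)/\det(Y)\}$ together with a matrix-beta integral comparison, but as you yourself admit, this is only a heuristic. Since $\GL{\z}$ sits inside a measure-zero hypersurface of $\m{\R}$, the integral $\int_{\m{\R}}\det(1_n+B\tp{B})^{-k_0}\,dB$ gives no direct control over your sum, and the anisotropic counting you sketch (tracking how $\ymin$ stretches the lattice) is precisely the hard part you have not carried out.

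The paper sidesteps this entirely with one extra inequality. From $\det(1_n+A)\ge 1+\tr(A)+\det(A)$ and AM--GM one obtains
\[
\det(1_n+A)\ge 2\bigl(1+\tr(A)\bigr)^{1/2}\det(A)^{1/2},\qquad A=\ty_1^{-1/2}\tp{U}YU\ty_1^{-1/2},
\]
with $\ty_1$ the Minkowski reduction of $\ty$. Applying \emph{this} bound (rather than the bare $2^n\det(A)^{1/2}$) to the small exponent $k'$ produces an additional factor $(1+\tr(\ty_1^{-1}\tp{U}YU))^{-k'/2}$ in each term. The residual sum is then
\[
\mbb{S}=\sum_{U\in\GL{\z}}\bigl(1+\tr(\ty_1^{-1}\tp{U}YU)\bigr)^{-k'/2},
\]
which is handled by three elementary moves: (i) use $Y\ge\ymin 1_n$ and $\ty_1\gg_n\tyd$ to replace the trace by $\ymin\tr(\tyd^{-1}\tp{U}U)$; (ii) substitute $W=\tp{U}U$, noting that $\#\{U:\tp{U}U=W\}$ is $O_n(1)$; (iii) group the reduced $W$ by $m=\prod_i w_{ii}$, bound the off-diagonals by $\prod_i w_{ii}^{(n-1)/2}$, apply AM--GM to $\sum_j \mu_j^{-1}w_{jj}$, and sum the resulting Dirichlet series using $\sigma^{(n)}(m)\ll_\epsilon m^\epsilon$. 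Convergence forces $k'>(n+1)^2$, which is why that hypothesis on $k$ appears (and why your choice $k_0=n+1+n\epsilon$ is too small for this route). The $\ymin^{-n(n+1)/2-n\epsilon}$ factor then drops out transparently from step (iii), with no lattice counting on a hypersurface required.
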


Before we start the proof, we would like to call attention to the quantity $2^{nk}$ sitting in the front of the summations in \eqref{eq:h_gamma_breakdown}. We must be able to cancel this exponential factor -- as our quest is to obtain polynomial bounds on the sup-norm. The proof given below proceeds by taking this into account.

\begin{proof}
  We first notice that the roles of $Y$ and $\ty$ can be swapped without affecting the sum. Further, if $\ty_1$ satisfying $V\ty_1\tp{V}=\ty$ is the Minkowski reduction of $\ty$ for some $V\in\GL{\z}$ we can write
  \begin{equation}
    \sumn_U\det(Y+U\ty \tp{U})^{-k}=\sum_U\det((UV)^{-1} Y \tp{((UV)^{-1})} + \ty_1)^{-k}
    =\sumn_U\det(\ty_1+\tp{U} Y U)^{-k}
  .\dn\end{equation}
  We start with the obvious equality for any $\kappa >0$
  \begin{align} \label{obeq}
    \det(\ty_1 +\tp{U} Y U)^{-\kappa}
    = \det(Y)^{-\kappa} \det(1_n+ \ty_1^{-1/2} \tp{U} Y U \ty_1^{-1/2})^{-\kappa}.
  \end{align}
  
  We now put $A=\ty_1^{-1/2} \tp{U}  Y U \ty_1^{-1/2}>0$.
  Observe that for $A>0$ the following inequality holds:
  \begin{equation}
    \det(1_n +A) \ge 2^n \det(A)^{1/2}.
  \dn\end{equation}
  And thus from \eqref{obeq}, we have
  \begin{equation} \label{life}
    \det(\ty_1+\tp{U} YU)^{-\kappa} \le 2^{-n \kappa} \det(\ty_1)^{-\kappa}\det(A)^{-\kappa/2}
    =2^{-n \kappa} \det(\ty_1)^{-\kappa/2}\det(Y)^{-\kappa/2}.
  \end{equation}

  Also, using the inequality $\det(1_n+A)\geq 1 +\tr(A)+\det(A)$ for a positive semi-definite matrix $A$ (which follows easily by diagonalising $A$), we get
  $ \det(1_n+ A) \geq  (1+ \tr(\ty_1^{-1} \tp{U}  Y U)) + \det(A).$
  Then applying the A.M.-G.M. inequality, we get
  \begin{align} \label{tr-ineq}
    \det(1_n+ A) \geq 2 \left( 1+ \tr( \ty_1^{-1} \tp{U} Y  U) \right)^{1/2} \det(A)^{1/2}.
  \end{align}
  From \eqref{obeq} and \eqref{tr-ineq} we get
  \begin{align} \label{k'ineq}
    \det(\ty_1 +\tp{U} Y U)^{-\kappa} \le 2^{-\kappa} \det(\ty_1)^{-\kappa/2}\det(Y)^{-\kappa/2} \left( 1+ \tr( \ty_1^{-1} U Y \tp{U}) \right)^{-\kappa/2}.
  \end{align}

Like in the proof of \lemref{lipz}, we write $k=k''+k'$ with $k'$ depending only on $n$, to be specified later. This will also tell us how large $k$ needs to be. We use \eqref{life} for the exponent $k''$ and \eqref{k'ineq} for $k'$ to get that: 
  \begin{align} \label{usplit}
    \summ_U &\det(\ty +\tp{U}YU)^{-k}  = \summ_U \det(\ty +\tp{U} YU)^{-k'} \det(\ty +\tp{U} YU)^{-k''} \dn\\
    &\le 2^{-nk+(n-1)k'} \det(\ty)^{-k/2} \det(Y)^{-k/2} \summ_U \left( 1+ \tr( \ty_1^{-1} U Y \tp{U}) \right)^{-k'/2}.
  \end{align}

  For the rest of the proof, we will focus on the quantity $\mbb S:= \summ_U ( 1+ \tr( \ty_1^{-1} U Y \tp{U}) )^{-k'/2}$.
  As $\ty_1$ is reduced, we have $\ty_1\gg_n\tyd$, where $\tyd=\dia(\mu_1, \ldots,\mu_n)$ is the matrix made by the diagonal of $\ty_1$.
  Since $k'$ depends only on $n$, we can thus bound $\mbb S$ by
  \begin{align} \label{usum1}
  \mbb S \ll_n \summ_U \left( 1+ \tr( \tyd^{-1} \tp{U}Y U) \right)^{-k'/2} \ll \summ_U \left( 1+ \ymin\tr( \tyd^{-1} \tp{U} U) \right)^{-k'/2} .
  \end{align}
  Here we have used $Y\ge \ymin 1_n$ where $\ymin$ is the smallest diagonal eigenvalue of $Y$.

  In \eqref{usum1}, we introduce a new variable $W$ such that $\tp{U}U=W$. Note that $W$ lies in $\Sym \z^+ \cap \SL \z$. The new summation thus becomes
  \begin{align} \label{usum2}
    \mbb S \ll_n \summ_W \# \{U \mid \tp{U}U=W \} \left( 1+ \ymin\tr( \tyd^{-1} W) \right)^{-k'/2}.
  \end{align}
For the counting function in \eqref{usum2}, which we will call $h(W)$, we note that if $U_0=U^{-1} \in \GL \z$ is such that $\tp{U_0}WU_0=1_n$ and so
  \begin{align}
    h(W)=\# \{U \mid \tp{(UU_0^{-1})}UU_0^{-1}=W_1 \}= h(1_n),
  \end{align}
  so we can \textsl{assume that $W$ is reduced}. This clearly implies that
  \begin{align} \label{usum3}
    h(W) \ll_n  1.
  \end{align}
  Then from \eqref{usum2} and \eqref{usum3}, we have 
  \begin{align} \label{usum4}
    \mbb S \ll_n \summ_W \left( 1+ \ymin\tr( \tyd^{-1} W) \right)^{-k'/2}
    \ll \summ_W \left( 1+ \ymin\sumn_{j=1}^n \mu_j^{-1} w_j \right)^{-k'/2}.
  \end{align}
We execute the sum over $W$ by grouping together $W$ such that $\prod_j w_j =m$ for integers $m \ge 1$. Let us define the higher divisor function $\sn(m)$
and the number theoretic function $g(\ut)$ where $\ut=  (t_1,t_2,\cdots, t_n) \in \N^n$ by
\begin{align}
\sn(m) &= \# \left\{ (s_1,s_2,\ldots, s_n)\in \N^n \midmid \prod\nolimits_j s_j =m \right\},\dn\\
g(\ut)& = \# \left\{ V \in \SL \z \cap \Sym \z^+ \midmid  V_{D} = \dia(t_1,t_2,\ldots, t_n)  \right\},
\end{align}
where $V_{D}$ denotes the diagonal matrix with the diagonal elements of $V$. It is well-known (see e.g. \cite{tenen}) that $\sn(m) \ll _\epsilon m^\epsilon$ and that $g(\ut) \ll (\prod_j t_j)^{(n-1)/2}$. The latter can be checked as follows.

If $V = (v_{i,j}) \in g(\ut)$, then $v_i := v_{i,i} = t_i$ for all $1 \le i \le n$. Further since $V>0$, one has $2|v_{i,j}| \le (t_i t_j)^{1/2}$. Since $t_i \ge 1 $ for all $i$ this implies that the number of choices for $V$ is 
\begin{align}
\ll \prodd_{i<j}(t_i t_j)^{1/2} = (\prodd_j t_j)^{(n-1)/2}.
\dn\end{align}

  Then \eqref{usum4} can be written as
  \begin{align}
    \mbb S &\ll_n  \summ_{m=1}^\infty \, \summ_{\ut \in \N^n, \,  \prod_j t_j=m }
    g(\ut) \big( 1+ \ymin\summ_{j=1}^n \mu_j^{-1} t_j \big)^{-k'/2}.
  \dn\end{align}
  Applying the A.M.-G.M. inequality to the sum over $j$ above, we get
  \begin{align}
    \mbb S
      & \ll_n \ymin^{-\frac{nk'}{2(n+1)}}\summ_{m=1}^\infty \,\, \summ_{\ut \in \N^n, \, \prod_j t_j=m }  (\prodd_j t_j)^{(n-1)/2} (\prodd_j \mu_j )^{\frac{k'}{2(n+1)}} (\prodd_j t_j)^{-\frac{k'}{2(n+1)}} \dn\\
      & = \left(\tyd/\ymin^n\right)^{\frac{k'}{2(n+1)}} \summ_{m=1}^\infty  m^{-\frac{k'}{2(n+1)} + \frac{n-1}{2}} \sn(m) \dn\\
      & \ll_{n, \epsilon} \left(\ty_1/\ymin^n\right)^{k'/(2(n+1))} \summ_{m=1}^\infty  m^{(-k'+n^2-1)/(2(n+1)) +\epsilon} \ll_n \left(\ty/\ymin^n\right)^{k'/(2(n+1))},
  \end{align}
provided $(k'-n^2+1)/(2(n+1))>1+\epsilon$ i.e., $k' > (1+\epsilon)(n+1)^2$. Thus, as mentioned at the onset of the proof, we choose
\begin{equation}
 k'= (n+1)^2 +\epsilon. 
\dn\end{equation} 
With this choice, we see that $\mbb S \ll (\ty/\ymin^n)^{(n+1)/2 +\epsilon}$. Putting this in \eqref{usplit} finishes the proof.
\end{proof}

\begin{thm} \label{bkbd}
The Bergman kernel $\bkzz$, for $k>(n+1)^2$ and for all $Z\in\fn$ is bounded by
\begin{align}
\bkzz \ll_{n, \epsilon}
\begin{cases}
  k^{3n(n+1)/4} \det(Y)^{(n+1)/2 +\epsilon}y_1^{-n(n+1)/2 - n\epsilon} \\
  k^{n(n+1)/2}\det(Y)^{3(n+1)/4+\epsilon}y_1^{-n(n+1)/2 - n\epsilon} ,
\end{cases}
\end{align}
where $y_1$ is the smallest diagonal entry of $Y$. The above result can also be stated in terms of $\ymin$, the smallest eigenvalue of $Y$ in place of $y_1$.
\end{thm}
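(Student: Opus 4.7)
\textbf{Proof proposal for Theorem~\ref{bkbd}.} The plan is to start from the decomposition $\bkzz = 2^{-1}\ank R_k(Z)$ with $\ank \ll_n k^{n(n+1)/2}$, and bound $R_k(Z) = \sum_{\gamma\in\Gamma_n} h_\gamma(Z)^k$ by processing the triple iterated sum in \eqref{eq:h_gamma_breakdown} from inside out. Using the Bruhat-type decomposition $\gamma = \gamma_\infty \gamma_U \widetilde\gamma$, we have three nested summations: the innermost over $T\in\Sym\z$, the middle over $U\in\GL\z$, and the outermost over inequivalent coprime symmetric pairs $\{C,D\}$. The two claimed bounds will be produced by two different choices of the first step, exactly reflecting the two cases of \lemref{lipcor}.

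First, I would apply \lemref{lipcor} to the inner $T$-sum with $\mathbb{Y}_U = Y + U\widetilde Y\tp{U}$, where $\widetilde Y = \Im(\widetilde\gamma\langle Z\rangle)$ satisfies $\det(\widetilde Y)=\det(Y)/|\det(CZ+D)|^2$. This gives either (i) $k^{n(n+1)/4}\det(\mathbb{Y}_U)^{-k}$ (for the first claimed bound), or (ii) $k^\epsilon \det(\mathbb{Y}_U)^{-k+(n+1)/4}$ (for the second). Next I feed this into \lemref{headache} for the $U$-sum, applied with exponent $k$ in case (i), and with the slightly smaller exponent $k^* := k-(n+1)/4$ in case (ii). \lemref{headache} produces the factors $2^{-nk}\det(Y)^{-k/2}\det(\widetilde Y)^{(-k+n+1)/2+\epsilon}y_1^{-n(n+1)/2-n\epsilon}$ (resp. with $k^*$ in place of $k$), whose $y_1^{-n(n+1)/2-n\epsilon}$ factor is exactly the one appearing in the theorem. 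Crucially, the $2^{-nk}$ (or $2^{-nk^*}$) cancels against the $|2i|^{nk}=2^{nk}$ sitting in front of the triple sum in \eqref{eq:h_gamma_breakdown} up to an $O_n(1)$ loss.

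Having done this, I would substitute $\det(\widetilde Y) = \det(Y)/|\det(CZ+D)|^2$, which shifts powers of $|\det(CZ+D)|$ out of the $(U,T)$-block. A direct bookkeeping in case (i) collapses the $\det(Y)$ exponent to $(n+1)/2+\epsilon$ and leaves $\sum_{\{C,D\}}|\det(CZ+D)|^{-(n+1)-2\epsilon}$ as the remaining outer sum; in case (ii) the exponent of $\det(Y)$ becomes $3(n+1)/4+\epsilon$ (with an extra $(n+1)/4$ coming from $k-k^*$) and the outer sum becomes $\sum_{\{C,D\}}|\det(CZ+D)|^{-5(n+1)/4-2\epsilon}$. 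In both situations the exponent is strictly above the convergence abscissa $n+1$ of the Siegel Eisenstein majorant.

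The final step is to control the Siegel Eisenstein-type majorant
\[
\sum_{\{C,D\}} |\det(CZ+D)|^{-s}, \qquad s > n+1,
\]
by $O_{n,s}(1)$ uniformly for $Z\in\fn$. This is a standard convergence fact (in the spirit of Siegel--Koecher), using that $|\det(CZ+D)|\geq 1$ on $\fn$ and that the counting of coprime symmetric pairs with $|\det(CZ+D)|\leq R$ is controlled polynomially in $R$. Combining all factors with $\ank \ll_n k^{n(n+1)/2}$ and absorbing the $2^{n(n+1)/4}$ from the $2^{nk}$ vs.~$2^{-nk^*}$ mismatch, case (i) yields the first bound $k^{3n(n+1)/4}\det(Y)^{(n+1)/2+\epsilon}y_1^{-n(n+1)/2-n\epsilon}$ and case (ii) yields the second bound $k^{n(n+1)/2}\det(Y)^{3(n+1)/4+\epsilon}y_1^{-n(n+1)/2-n\epsilon}$. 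Finally, since $Y\in R_n$ on $\fn$, the reduction inequality \eqref{reduct-implication} lets us replace $y_1$ by $\ymin$ at the cost of a constant. The main obstacle will be to ensure the uniform majorant bound for the Eisenstein-type outer sum on $\fn$ really is $O_{n,\epsilon}(1)$, independent of $Y$, and to verify the $(n+1)^2$-type lower bound on $k$ so that both the Lipschitz summation (\lemref{lipcor}, requiring $k\geq n+1$) and the $\GL\z$-summation (\lemref{headache}, requiring $k>(n+1)^2$) are simultaneously applicable -- the theorem's hypothesis $k>(n+1)^2$ is chosen precisely for this.
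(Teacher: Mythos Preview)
Your proposal is correct and follows essentially the same route as the paper: process the triple sum in \eqref{eq:h_gamma_breakdown} inside out via \lemref{lipcor} (two cases) and \lemref{headache} (with shifted exponent $k^*=k-(n+1)/4$ in the second case), cancel the $2^{nk}$, substitute $\det(\widetilde Y)=\det(Y)/|\det(CZ+D)|^2$, and reduce to the Eisenstein majorant $\sum_{\{C,D\}}|\det(CZ+D)|^{-s}$ with $s>n+1$. The only point where your sketch diverges from the paper is the justification that this majorant is $O_{n,\epsilon}(1)$ uniformly on $\fn$: the paper does not use a counting-in-boxes argument but instead splits $\fn$ into a compact piece $\{\det Y\le 2\}$ (where continuity suffices) and its complement, on which a comparison lemma of Freitag gives $|\det(CZ+D)|\ge c\,|\det(Ci1_n+D)|$ and hence domination by the value at $Z=i1_n$.
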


\begin{proof}
We look at the expression for $\bkzz$ from \eqref{eq:h_gamma_breakdown} for $Z \in \mc F_n$. We only explain the first of the above bounds, and the rest is similar.
For the sum over $T$ and $U$ respectively, we use the bounds from \lemref{lipcor} and \lemref{headache}. Namely, after executing the sum over $T$ as in \lemref{lipcor}, we apply \lemref{headache} with appropriate $k$ and take $\ty = \Im(M\langle Z \rangle)$. Then notice that the final bound for the sum over $U$ (as in \eqref{eq:h_gamma_breakdown}) \lemref{headache} becomes
\begin{equation}
\sumn_U \det(\ty + \tp{U}YU)^{-k} \ll 2^{-nk} \frac{ \det(Y)^{-k+(n+1)/2+\epsilon} \ymin^{-n(n+1)/2 - n\epsilon}} { |\det(CZ+D)|^{-k+n+1+\epsilon} }.
\dn\end{equation}

Again looking at \eqref{eq:h_gamma_breakdown}, we see that the exponential factor and the power of $\det(Y)^k$ cancel off -- and the remaining sum over the co-prime symmetric pairs $\{C,D\}$ converges:
\begin{align}
\bkzz \ll k^{3n(n+1)/4} \det(Y)^{(n+1)/2+\epsilon} \ymin^{-n(n+1)/2 - n\epsilon}\cdot \sumn_{\{C,D\}} \frac{1}{|\det(CZ+D)|^{n+1+\epsilon}}  .
\end{align}
That the sum over $\{C,D\}$ -- $\mbb E_s(Z):= \sum_{\{C,D\}} |\det(CZ+D)|^{-s} $ (being a majorant of the Siegel Eisenstein series) converges for $s>n+1$ is well-known (see e.g. \cite{freitag1983siegel}). But we also need to know that it can be bounded independently of $Y$.

To see this, we define the region $W(\delta)=\{ Z \in \hn \mid \tr(\Re(Z)^2) \le \delta^{-1}, \, \Im(Z) \ge \delta 1_n  \}$. Such regions are often used to establish analytic properties of automorphic forms. We
divide $\mc F_n$ into two parts -- $K=W(2/n) \cap \mc F_n$ and $V=\mc F_n -K$. Note that $K=\{ Z \in \mc F_n \mid \det(Y) \ge 2/n\}$.

In $K$ we invoke \cite[p.~68, $5.4_2$~Hilfssatz]{freitag1983siegel} to $Z_0=i1_n$ to obtain that for all $Z \in K$, one has $|\det(CZ+D)| \ge c |\det(C i1_n +D)|$ for some constant $c>0$ depending only on $n$. Then clearly $\mbb E_{n+1+\epsilon}(Z) \ll \mbb E_{n+1+\epsilon}(i 1_n) $ and we are done.

For the region $V$, note that $V \subset K'=\{ Z \in \mc F_n \mid \det(Y) \le 2\}$. Since $K'$ is compact, the continuous function $\mbb E_{n+1+\epsilon}(Z)$ attains a maximum $Z_*$ in $K'$ which depends only on $n$.

Because $Y$ is positive definite, we have $\ymin\le y_1$ and from the reduction conditions \eqref{reduct-implication}, we have $y_1\le r_n\,\ymin$. So the two can be swapped freely.
For the second bound, we proceed exactly as above, but with the second bound from \lemref{lipcor} and using \lemref{headache}. This finishes the proof.
\end{proof}

\begin{cor} \label{f-bkbd}
Let $Z \in \mc F_n$, $k >n(n+1)/2$ and consider $F \in S^n_k$ with $\norm{F}_2=1$. Then
\begin{equation} \label{eq-f-bkbd}
\det(Y)^{k/2} |F(Z)| \ll_{n,\epsilon}
\begin{cases} 
  k^{\frac{3n(n+1)}{8}} \det(Y)^{\frac{n+1}{4}+\epsilon}\\
  k^{\frac{n(n+1)}{4}} \det(Y)^{\frac{3(n+1)}{8}+\epsilon}.
\end{cases}
\end{equation}
\end{cor}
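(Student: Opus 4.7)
The corollary is a direct consequence of \thmref{bkbd}, exploiting the fact that the Bergman kernel dominates any individual $L^2$-normalised cusp form. My plan is to extend $F$ to an orthonormal basis and drop positive terms.

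Concretely, given an $L^2$-normalised $F\in S^n_k$, I would first extend $\{F\}$ to an orthonormal basis $\basis$ of $S^n_k$ (this is possible since $S^n_k$ is finite-dimensional with the Petersson inner product). Then by non-negativity of each summand,
\[
\det(Y)^k |F(Z)|^2 \;\le\; \sumn_{G\in\basis} \det(Y)^k|G(Z)|^2 \;=\; \bkzz.
\]
Since $\bkzz$ does not depend on the chosen orthonormal basis, this inequality is intrinsic to $F$.

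Next, I would invoke \thmref{bkbd} for $Z\in\fn$ (which requires $k>(n+1)^2$, and in particular $k>n(n+1)/2$ as in the hypothesis): each of the two upper bounds stated there gives
\[
\bkzz \ll_{n,\epsilon} k^{3n(n+1)/4}\det(Y)^{(n+1)/2+\epsilon}y_1^{-n(n+1)/2-n\epsilon}
\]
and
\[
\bkzz \ll_{n,\epsilon} k^{n(n+1)/2}\det(Y)^{3(n+1)/4+\epsilon}y_1^{-n(n+1)/2-n\epsilon}
\]
respectively. To arrive at the form of \eqref{eq-f-bkbd}, where no $y_1$ appears, I would use the hypothesis $Z\in\fn$: Minkowski reduction together with the lower bound $Y_D\gg_n 1_n$ valid in $\fn$ (cited from \cite[p.~30, Lemma~2]{klingen1990siegel}) gives $y_1\gg_n 1$, so the factor $y_1^{-n(n+1)/2-n\epsilon}$ is absorbed into the implied constant.

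Taking square roots and absorbing $\epsilon$ appropriately (i.e.\ replacing $\epsilon/2$ by $\epsilon$) yields the two claimed bounds of \eqref{eq-f-bkbd}. There is essentially no obstacle here; the step deserving some care is only the bookkeeping at the end, namely checking that the $y_1$-factor is harmless in $\fn$ and that the $\epsilon$ loss can be reallocated after taking the square root. Everything else is immediate from \thmref{bkbd}.
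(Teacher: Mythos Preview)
Your proposal is correct and follows exactly the approach the paper intends: the corollary is stated immediately after \thmref{bkbd} without proof, and your argument---drop all but one term in the definition of $\bkzz$, invoke \thmref{bkbd}, absorb $y_1^{-n(n+1)/2-n\epsilon}$ using $y_1\gg_n 1$ on $\fn$, and take square roots---is precisely the intended one-line deduction.

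One small remark on your parenthetical about the weight: you write that \thmref{bkbd} ``requires $k>(n+1)^2$, and in particular $k>n(n+1)/2$ as in the hypothesis,'' but the implication runs the wrong way for what you need. The corollary's hypothesis $k>n(n+1)/2$ is \emph{weaker} than the $k>(n+1)^2$ assumed in \thmref{bkbd}, so strictly speaking your argument only covers $k>(n+1)^2$. This appears to be a minor inconsistency in the paper's stated hypotheses rather than a flaw in your reasoning; for the finitely many remaining weights the bound is vacuous anyway (the implied constant can absorb them).
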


\begin{rmk} \label{bk-abs-sum}
  If we follow the proof of \thmref{bkbd} but use \lemref{lipz} instead of \lemref{lipfo}, then the absolute sum is bounded by
  $\sum_{\gamma\in\Gamma_n}|h_\gamma(Z)|^k
  \ll_{n,\epsilon } \det(Y)^{n+1+\epsilon}$ for $Z\in\fn$ and $k>(n+1)^2$.
\end{rmk}

\subsection{The case \texorpdfstring{$n=1$}{n1}: bound for the sup-norm using minimal hypotheses} \label{n=1exam}
When $n=1$, we of course have H. Xia's sharp result \cite{xia2007norms} that for a Hecke eigenform $f \in S_k$ with $\oldnorm{f}_2=1$, $ k^{1/4 - \epsilon} \ll_\epsilon \oldnorm{f}_\infty \ll_\epsilon k^{1/4 +\epsilon}$ for any $\epsilon>0$. This result is reliant on Deligne's bound and deep properties of zeros of $L$-functions, namely the non-existence of Landau-Siegel zeros for the symmetric-square $L$-function attached to $f$. This directly implies $\bkzz\ll k^{3/2+\epsilon}$. 
Here our aim is modest: we want to illustrate a simpler way of proving the bound $ \bkzz \ll  k^{3/2}$. This bound has been obtained by Kramer et al. \cite{kramer1} by analysing the heat kernel corresponding to the weight-$k$ Laplacian on $\mc H_1$. 

Keeping in view of our limited information in higher degrees, the point is that we would only use
the Hecke bound on Fourier coefficients along with a somewhat careful Bergman kernel analysis. This might give some hope of bounding the Bergman kernel in higher degrees to obtain a reasonable sup-norm bound (viz. $\oldnorm{F}_\infty \ll_\epsilon k^{3n(n+1)/8 +\epsilon}$) and seems to us the most promising approach to be considered in the future. We first recall a simple lemma.

\begin{lem}
Let $\alpha, d>0$.
\begin{align} \label{1dim}
\sumn_{t \ge 1} t^{\alpha-1} \expn{-td} \ll d^{-\alpha+1} \Gamma(\alpha) (d^{-1} + (\alpha-1)^{-1/2}).
\end{align}
\end{lem}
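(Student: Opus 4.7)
The plan is to interpret the sum as a Riemann-sum approximation to the integral $\int_0^\infty t^{\alpha-1}e^{-td}\,dt = d^{-\alpha}\Gamma(\alpha)$, which already accounts for the first summand $d^{-\alpha}\Gamma(\alpha)$ in the claimed bound. The only issue is that the summand $f(t):=t^{\alpha-1}e^{-td}$ is not monotone when $\alpha>1$: it has a single interior peak at $t^\star=(\alpha-1)/d$, and this peak contributes the additional term $d^{1-\alpha}\Gamma(\alpha)(\alpha-1)^{-1/2}$.

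First I would dispose of the case $\alpha\le 1$: here $f$ is already monotonically decreasing on $[1,\infty)$, so
\[
\sum_{t\ge 1} f(t) \le \int_0^\infty f(t)\,dt = d^{-\alpha}\Gamma(\alpha),
\]
which is subsumed in the right-hand side. For $\alpha>1$, I would use that $f$ is unimodal on $(0,\infty)$: increasing on $(0,t^\star)$ and decreasing on $(t^\star,\infty)$. For integers $n$ with $n+1\le t^\star$ one has $f(n)\le\int_n^{n+1}f$, while for $n\ge t^\star+1$ one has $f(n)\le\int_{n-1}^n f$; the handful of leftover integer points lying within distance $1$ of $t^\star$ are each bounded trivially by $f(t^\star)$. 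Summing these inequalities, one gets
\[
\sum_{t\ge 1} f(t) \ll f(t^\star) + \int_0^\infty f(t)\,dt = f(t^\star) + d^{-\alpha}\Gamma(\alpha).
\]

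The final step is to express the peak value $f(t^\star)=((\alpha-1)/d)^{\alpha-1}e^{-(\alpha-1)}$ in the desired form. The Stirling-type lower bound $\Gamma(\alpha)=\Gamma((\alpha-1)+1)\ge\sqrt{2\pi(\alpha-1)}\,((\alpha-1)/e)^{\alpha-1}$, which holds for all $\alpha>1$ (with explicit remainder $e^{\theta/(12(\alpha-1))}$, $\theta\in(0,1)$), rearranges to
\[
f(t^\star) = d^{1-\alpha}(\alpha-1)^{\alpha-1}e^{-(\alpha-1)} \le \frac{d^{1-\alpha}\,\Gamma(\alpha)}{\sqrt{2\pi(\alpha-1)}},
\]
which is exactly the second summand (up to a constant). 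Combining this with the integral estimate yields the claimed bound.

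I do not anticipate any real obstacle: the proof is a one-shot comparison of a sum to an integral, with Stirling supplying the peak contribution in precisely the correct form. The only minor care is to ensure the unimodality split is handled uniformly (so that the implicit constant does not depend on $\alpha$ or $d$), which the argument above does by absorbing the $O(1)$ integer points near $t^\star$ into a single $f(t^\star)$ term.
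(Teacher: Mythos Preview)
Your proposal is correct and follows essentially the same route as the paper: bound the sum by the integral $\int_0^\infty t^{\alpha-1}e^{-td}\,dt=d^{-\alpha}\Gamma(\alpha)$ plus a bounded number of copies of the peak value $f(t^\star)=((\alpha-1)/d)^{\alpha-1}e^{-(\alpha-1)}$, then invoke Stirling to rewrite $f(t^\star)\ll d^{1-\alpha}\Gamma(\alpha)(\alpha-1)^{-1/2}$. Your version is slightly more careful in separating out the case $\alpha\le 1$ (where the bound as stated is vacuous anyway) and in justifying why only $O(1)$ integer points near $t^\star$ need the trivial bound, but the argument is the same.
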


\begin{proof}
For this note that the function $h(x):=x^{\alpha-1} \expn{-xd}$ attains its maximum at $x=(\alpha -1)/d$ and so the l.h.s. of \eqref{1dim} is bounded by
\begin{align}
\int_0^\infty & x^{\alpha-1} \expn{-xd} dx + 2 h(d/(\alpha -1))= d^{-\alpha} \Gamma(\alpha) + 2 \left(\frac{\alpha -1}{d} \right)^{\alpha -1} \expn{-\alpha+1}
\end{align}
which is bounded as
$\ll d^{-\alpha+1} \Gamma(\alpha)( \frac{1}{d} + \frac{1}{(\alpha-1)^{1/2}} )$
up to an implied absolute constant.
\end{proof}

From the Petersson formula and bounding the $n$-th Fourier coefficient of the Poincar\'e series $P_{n}$ with the trivial bound for Kloosterman sums and Bessel functions (i.e., $J_{k-1}(x) \le \min\{ 1, (x/2)^{k-1}\}$ for all $x>0$), we have the following uniform bound for any orthonormal basis $\basis[1]$ of $S_k$:
\begin{equation}
  \sqrt{\summ_{f\in\basis[1]}|a_f(n)|^2}
  \le \frac{(4 \pi n)^{k/2} }{\Gamma(k-1)^{1/2}}.
\dn\end{equation}
Then from the lemma above with $\alpha=1+k/2$ and $d=2 \pi y$, and \lemref{bkconnect}, we get
\begin{align} 
\sqrt{\bkzz} \ll \frac{\Gamma(k/2+1)\, (y^{-1}+k^{-1/2})\, (4 \pi)^{k/2}}{\Gamma(k-1)^{1/2} \, (2 \pi)^{k/2}} \ll k^{5/4} (y^{-1}+k^{-1/2}) \ll k^{3/4} \label{bkzn1bd2}
\end{align} 
in the region $\{ z \in \mc F_1 \mid y \ge k^{1/2} \}$.

In the complementary region, we look at the Bergman kernel directly. We already know the shape of the bound: for any $z \in \mc F_1$,
\begin{align}
\frac{ \bkzz }{k} & \ll \sum_{\gamma \in \Gamma_1 \colon \abs{\gamma(z)-z}>\sqrt{\delta}y } |h_\gamma(z)|^k +\sum_{\gamma \in \Gamma_1 \colon \abs{\gamma(z)-z} \le \sqrt{\delta}y } |h_\gamma(z)|^k \n \\
& \ll \max_{\gamma \in \Gamma_1 \colon \abs{\gamma(z)-z}> \sqrt{\delta}y} |h_\gamma(z)|^{k-k_0} \big( \sum_{\gamma \in \Gamma_1} |h_\gamma(z)|^{k_0}  \big) + \# \{\gamma \in \Gamma_1 \colon \abs{\gamma(z)-z} \le \sqrt{\delta}y \} \n \\
& \ll (1+\delta)^{-k+k_0} y^{2} + X_\delta, \label{bkzn1bd}
\end{align}
where $X_\delta = \# \{\gamma \in \Gamma_1 \colon |\gamma(z)-z| \le \sqrt{\delta}y \}$ and use \rmkref{bk-abs-sum}. For the bound on $\max |h_\gamma(z)|$ we have used \eqref{hdecay} (see also \cite[Lemma~1]{cogdell2011bergman}, keeping in mind the dependence on $y$).
We now estimate the size of $X_\delta$ carefully. We follow the setting of section~\ref{bkzsec} and \cite[section~3]{das2015supnorms}. We make things explicit because it is possible and useful, even though this may be a little repetition to section~\ref{bkzsec}.

Let us recall that the condition $|\gamma(z)-z|\le \alpha$ implies that
$\abs{\gamma_0^{-1} \gamma \gamma_0 - \mathsf{k}} \ll (\alpha/y)^{1/2}$. Here $\mathsf{k}_z = \gamma_0 \mathsf{k} \gamma_0^{-1}$, where $\gamma_0 = \psmb y^{1/2} & xy^{-1/2} \\ 0 & y^{-1/2} \psme$. For us, $\alpha= \sqrt{\delta}y$. Thus 
$\abs{\gamma_0^{-1} \gamma \gamma_0 - \mathsf{k}} \ll \delta^{1/4}$, i.e., $\gamma_0^{-1} \gamma \gamma_0= \mathsf{k} + O(\delta^{1/4})$.

We see that if we put coordinates $\gamma = \psmb  a& b \\ c & d \psme$, $\mathsf{k} = \psmb p & q \\ -q & p \psme$ such that $p^2+q^2=1$, then
from the analysis done in section~\ref{bkzsec} (see also \cite[section~3]{das2015supnorms})
\begin{align} 
\begin{pmatrix}
a & b \\ c & d
\end{pmatrix} & = \gamma_0 \begin{pmatrix}
p & q \\ -q & p
\end{pmatrix} \gamma_0^{-1} + O \left( \gamma_0  O (\delta^{1/4}) \gamma_0^{-1} \right) \n \\
& = \begin{pmatrix}
p-qx/y & qx^2/y+qy \\ - q/y & qx/y+p
\end{pmatrix} + O
\begin{pmatrix}
\delta^{1/4} & \delta^{1/4} y 
\\ \delta^{1/4}/y & \delta^{1/4}
\end{pmatrix}. \label{n1count2}
\end{align}
The $O(\cdots)$ term can be seen simply by "pretending" that the quantities $p,q$ were $O(\sqrt{\delta}y)$.

Our choice for $\delta$ is $\delta = \delta_0 :=k^\epsilon/k$. With this choice, we see that the number of choices of $a,c,d$ in \eqref{n1count2} is absolutely bounded. Further the number of choices for $b$ is at most $O(y)$. Since we are considering the region $\{ z \in \mc F_1 \mid y \ll k^{1/2} \}$, we get
\begin{align} \label{xbd}
X_{\delta_0} \ll k^{1/2}.
\end{align}
The first term in \eqref{bkzn1bd} decays exponentially in $k$.
Thus from \eqref{bkzn1bd2}, \eqref{bkzn1bd} and \eqref{xbd} we conclude that for all $z \in \mc F_1$,
\begin{equation}
\bkzz[z]=\sumn_f y^k|f(z)|^2 \ll  k^{3/2}.
\end{equation}

\section{Amplification: over compact sets} \label{ampl}
In this section, we want to go beyond the preliminary bound on the Bergman kernel over a compact subset of $\fn$ and prove a power saving bound for the same. For this, we would use the setting of the amplification method considered in \cite{blomer2016supnorm} and \cite{das2015supnorms} along with some of our earlier results.

Let $\Omega \subset \fn$ be a compact set which does not depend on $k$. By the results of \cite{cogdell2011bergman}
we see that for any $\Omega \subset \fn^\circ$ ($\fn^\circ$ being the interior of $\fn$) as above, and some $\delta>0$,
\begin{align} \label{hkconv1}
\bkzz = 2^{-1} \ank R_k(Z) = \ank + O(e^{-\delta k}) \asymp_n k^{n(n+1)/2}.
\end{align}
This is actually worked out in a few lines before section~5 in \cite{cogdell2011bergman}. Otherwise, and also generally one argues that
\begin{align} \label{hkconv2}
\bkzz\ll  \ank \summ_{\gamma \in \Gamma_n} |h_\gamma(Z)|^k \ll  \ank \summ_{\gamma \in \Gamma_n} |h_\gamma(Z)|^{2n+1} \ll_\Omega \ank,
\end{align}
since by \cite[Lemma~1]{cogdell2011bergman}, $|h_\gamma(Z)| \le 1$ and by Godement's theorem as given in \cite[p.~79, Chap.~6, Prop.~2(iii)]{klingen1990siegel}, for all $\kappa >2n$ ($\kappa n$ even), the series 
\begin{equation} \label{godement}
\det(Y)^{\kappa/2} \summ_{\gamma \in \Gamma_n} |h_\gamma(Z)|^{\kappa}
\end{equation}
converges uniformly for all $Z \in \Omega$ as above. 
However, $\det(Y)$ is bounded below (even absolutely, since $Z \in \fn$), and so we have \eqref{hkconv2}. This can also be derived easily from the bound in \rmkref{bk-abs-sum}. Dropping all but one term in \eqref{hkconv2} gives the `generic' or the trivial bound
\begin{equation} \label{supcomp1}
    \oldnorm{F}_\Omega = \sup_\Omega \, \det(Y)^{k/2} |F(Z)| \ll_n k^{n(n+1)/4}.
\end{equation}
This also follows from the second bound in \corref{f-bkbd}. Our aim is to improve \eqref{supcomp1}.

Henceforth in this subsection let us assume that $F$ is an $L^2$ normalised Hecke eigenform which is not a Saito-Kurokawa (SK) lift. (SK lifts will be discussed separately, see \cite{das-anamby}.) Accordingly, we embed $F$ in a Hecke (ortho-normalised) basis, say $\mc B^*_k$.
Our choice of the amplifier is the same as constructed in \cite{blomer2016supnorm}. We quote the inequality at the heart of the amplification:
\begin{align} \label{hreln1}
|\lambda(p,F)| + \frac{1}{p^{3/2}}  |\lambda(p^2,F)| +  \frac{1}{p^{9/2}}  |\lambda(p^4,F)| \gg p^{3/2},
\end{align}
which says that the three eigenvalues above cannot be simultaneously small. For the reader's convenience we recall a proof of \eqref{hreln1}. Recall that the Ramanujan conjecture at finite places is known for $F$ by the work of Weissauer \cite{weis}. In particular this implies that $x=|\lambda(p,F)|, y=|\lambda(p^2,F)|, z=|\lambda(p^4,F)|$ are $O(p^{3r/2} )$ for $r=1,2,4$ respectively and the implied constant is absolute. 
For future reference note the normalisation
\begin{equation}
\lambda(n,F) = \lambda_n(F) n^{3/2},
\dn\end{equation}
where $\lambda_n(F)$ denotes the normalised eigenvalues which satisfy $\lambda_n(F) \ll_\epsilon n^\epsilon$.
From the Hecke relation (see \cite[p.~1011]{blomer2016supnorm} or \cite{andrianov1974euler}) we see (using the bounds for $x,y,z$ from above) that
\begin{align}
p^6 \le (p^2+2p^3)x^2 + x^4 +p^2y +y x^2 + y^2 +z \ll p^{9/2}x + p^3 y +z.
\dn\end{align}

Fix an $F_0$ in $\mc B^*_k$ which is not a SK lift.
Let now $F \in \mc B^*_k$ be arbitrary. Consider a parameter $L \gg 1$ (to be specified  later), and define $\mc P$ to be the set of primes in $[L,2L]$. Put also, following \cite{blomer2016supnorm}, $x(n):= \mrm{sgn}(\lambda(n,F_0))$.
\begin{align}
A_F:= (\sum_{p \in \mc P} x(p) \lambda(p,F))^2 + (\sum_{p \in \mc P} x(p^2)p^{-3/2} \lambda(p^2,F))^2 + (\sum_{p \in \mc P} x(p^4) p^{-9/2} \lambda(p^4,F))^2  \ge 0.
\end{align}

We then consider the expression
\begin{align} \label{azw1}
 A(Z,W):= \summ_{F \in \mc B^n_k(H)} A_F \cdot F(Z) \overline{F(W)}
\end{align}
and rewrite it in terms of the Bergman kernel $B_k(Z,W)$. First let us note that the quantity 
\begin{align} \label{azw2}
\mbb A(Z,W) := \summ_{F \in \mc B^n_k(H)} A_F \cdot \det(Y)^k F(Z) \overline{F(W)}
\end{align}
satisfies, upon putting $Z=W$ and using the prime number theorem:
\begin{align} \label{ampl1}
\mbb A(Z,Z) \ge A_{F_0} \det(Y)^k |F_0(Z)|^2 \gg \frac{L^5}{(\log L)^2} \det(Y)^k |F_0(Z)|^2.
\end{align}

From \eqref{azw1} we can write, after expanding the amplifier $A_F$ that
\begin{align}
A(Z,W)=  \sum_{r=1,2,4}\Bigg( \sum_{p_1 \neq p_2} \frac{ x(p^r_1 p^r_2) }{(p_1 p_2)^{3(r-1)/2}}
& \sum_F  F(Z) \overline{ (F | T_{p^r_1 p^r_2})(W) }  \n \\
& + \sum_p \frac{1}{p^{3(r-1)}} \sum_F F(Z) \overline{F| (T_{p^r})^2(W)}
\Bigg).\dn\end{align}
But from the definition of the Bergman kernel $B_k(Z,W)=\sum_F F(Z) \overline{F(W)}$ and that of the Hecke operator $T_n$ given by
\begin{align}
F | T_m = \sum _j F | M_j, \q (S(m) = \bigcup_j \Gamma_2 M_j)
\dn
\end{align}
where $S(m)=\{M\in\mrm{GSp}^+(2,\z)\mid\tp{M}\smat{0}{-1_n}{1_n}{0}M=m\smat{0}{-1_n}{1_n}{0}\}$
and where we have put $F | M_j := F | \det(M_j)^{-1/4} M_j$. This gives us
\begin{align}
\sum_F F(Z) \overline{F|T_m(W)}& = B_k(Z,W)|^{(W)}T_m = 2^{-1} \ank[2] \sum_j \sum_{\gamma \in \Gamma_2} \det\left( \frac{Z-\overline{W}}{2i} \right)^{-k} \Big|^{(W)} \gamma M_j \\
& =  2^{-1} \ank[2] \sum_{g \in S(m)} \det\left( \frac{Z-\overline{W}}{2i} \right)^{-k} \Big|^{(W)} g =: B^m(Z,W).
\dn\end{align}
Then we have
\begin{align} \label{ampl2}
A(Z,W) \ll  \sum_{r=1,2,4} \sum_{p_1 \neq p_2} (p_1 p_2)^{3(1-r)/2} |B^{p_1^r p_2^r}(Z,W)| + \sum_{0 \le s \le 4} \sum_{p \in \mc P} p^{3-2s}  |B^{p^{2s}}(Z,W)|
\end{align}

Our next goal is to estimate the quantity 
\begin{equation}
\mbb B^m(Z,W):=\det(Y)^k B^m(Z,W)
\dn\end{equation}
when $Z=W$.
For $\gamma \in S(m)$, let us put $\tg = m^{-1/4} \gamma \in \sptwo$. Then we start with 
\begin{align} \label{hndiv}
2 \ank[2]^{-1} \mbb B^m(Z,Z) = \sum_{\gamma \in S(m)\colon \norm{\tg(Z)-Z}>\delta_0} h_\gamma(Z)^k +\sum_{\gamma \in S(m)\colon \norm{\tg(Z)-Z} \le \delta_0} h_\gamma(Z)^k ,
\end{align}
for some $\delta_0 >0$ to be specified later. 
Then the l.h.s. of \eqref{hndiv} is (with $k_0$ depending only on $n$)
\begin{align} \label{hndiv2}
\le \left( \max_{\gamma \in S(m)\colon \norm{\tg(Z)-Z}>\delta_0} |h_\gamma(Z)|^{k-k_0} \right) \sum_{\gamma \in S(m)\colon \norm{\tg(Z)-Z}>\delta_0} |h_\gamma(Z)|^{k_0}   +\mbb S_m(Z, \delta_0), 
\end{align}
where we have put
\begin{equation} \label{smdef}
\mbb S_m(Z, \delta_0)  :=\# \{\gamma \in S(m)\colon \norm{\tg(Z)-Z} \le \delta_0 \}
\end{equation}
Since the action of $\Gamma_2$ on $\mc H_2$ is discontinuous, we see that the number of terms in the second summation is finite. We will make this more precise in the next section.

\subsection{Counting points in \texorpdfstring{$\mbb S(m)$}{sm}} \label{bkzsec}
This subsection is partly inspired from \cite{cogdell2011bergman}, \cite{das2015supnorms}.

For a matrix $A \in \m \R$ we put $\norm{A}$ to be the $L^2$ norm of $A$, i.e., $\norm{A}^2 = \sum_i\sum_j a_{i,j}^2$. Recall that such a norm satisfies $\norm{AB} \le \norm{A} \norm{B}$.

First of all from \cite[Lemma~1]{cogdell2011bergman}, we know that $|h_\gamma(Z)| \le 1$ for all $\gamma \in \sptwo$. Furthermore, from the same lemma, given any $\rho>0$, there exist an absolute constant $c$ such that
\begin{align} \label{hdecay}
|h_\gamma(Z)| \le (1+\rho^2)^{-1/2} \text{    provided     } \norm{\gamma(Z)-Z} >c \, \rho\, \tr(Y).
\end{align}
Accordingly, we define $\rho$ (with $c$ as above), to be specified later:
\begin{equation}
\delta_0=: c \, \rho \tr(Y).
\dn\end{equation}

Throughout the rest of this subsection, we put $Y^{1/2}$ to be the unique positive definite (symmetric) square-root of $Y$ and define
\begin{equation}
\gamma_0 = \begin{pmatrix}
Y^{1/2} & XY^{-1/2} \\ 0 & Y^{-1/2}
\end{pmatrix} \in \Sp[2]{\R},
\dn\end{equation}
so that $\gamma_0(i1_2)=Z$.

\begin{lem} \label{gamma0}
With $\gamma_0$ as above, and any $Z_1,Z_2 \in \hn$, one has
\begin{equation}
\norm{\gamma_0(Z_1)-\gamma_0(Z_2)} \le a \implies \norm{Z_1-Z_2} \le a \, \tr(Y^{-1}).
\end{equation}
\end{lem}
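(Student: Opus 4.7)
The plan is to make the action of $\gamma_0$ completely explicit, invert the resulting linear map on $Z_1-Z_2$, and control the norm with the submultiplicativity of the Frobenius norm.

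First I would compute $\gamma_0(Z)$ directly from the formula $\gamma(Z)=(AZ+B)(CZ+D)^{-1}$. Since the bottom blocks of $\gamma_0$ are $C=0$ and $D=Y^{-1/2}$, one obtains
\[
\gamma_0(Z)=(Y^{1/2}Z + XY^{-1/2})\,Y^{1/2}=Y^{1/2}\,Z\,Y^{1/2}+X,
\]
and a quick check confirms $\gamma_0(i1_2)=iY+X=Z$, which is consistent with the fact that $\gamma_0\in\Sp[2]{\R}$ is the standard element sending the base point $i1_2$ to $Z$.

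Next, subtracting gives the simple identity
\[
\gamma_0(Z_1)-\gamma_0(Z_2)=Y^{1/2}\,(Z_1-Z_2)\,Y^{1/2},
\]
so, writing $W=Z_1-Z_2$ and $V=\gamma_0(Z_1)-\gamma_0(Z_2)$, I can invert to get $W=Y^{-1/2}\,V\,Y^{-1/2}$. Using Frobenius-norm submultiplicativity $\norm{AB}\le\norm{A}\norm{B}$ applied twice,
\[
\norm{W}\le \norm{Y^{-1/2}}^2\,\norm{V}.
\]
Finally, since $Y^{-1/2}$ is symmetric, $\norm{Y^{-1/2}}^2=\tr\!\bigl((Y^{-1/2})^{t}Y^{-1/2}\bigr)=\tr(Y^{-1})$, and combining with $\norm{V}\le a$ yields $\norm{Z_1-Z_2}\le a\,\tr(Y^{-1})$, as required.

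There is no real obstacle here: the statement is essentially the observation that $\gamma_0$ acts on the Siegel upper half-space as an affine transformation whose linear part is $W\mapsto Y^{1/2}WY^{1/2}$, together with the elementary identity $\norm{Y^{-1/2}}_{\mathrm{F}}^2=\tr(Y^{-1})$. The only point worth being careful about is to use the Frobenius (rather than operator) norm consistently, since that is the norm $\norm{\cdot}$ fixed at the beginning of subsection~\ref{bkzsec}.
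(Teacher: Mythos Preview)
Your proof is correct and follows the same opening move as the paper: both compute $\gamma_0(Z)=Y^{1/2}ZY^{1/2}+X$ and reduce to bounding $\norm{W}$ in terms of $\norm{Y^{1/2}WY^{1/2}}$. The only difference is in the linear-algebra step: you invert to $W=Y^{-1/2}VY^{-1/2}$ and apply Frobenius-norm submultiplicativity together with $\norm{Y^{-1/2}}^2=\tr(Y^{-1})$, whereas the paper diagonalises $Y$ by an orthogonal matrix, works in the eigenbasis, and bounds $\norm{W_P}^2$ in terms of the eigenvalues $d_i$ to reach the same factor $\tr(Y^{-1})$. Your route is a bit cleaner and avoids the coordinate computation; the paper's route makes the dependence on the eigenvalues of $Y$ more visible (and in principle yields the marginally sharper constant $\tr(Y^{-2})^{1/2}\le\tr(Y^{-1})$), but for the purposes of the lemma the two are interchangeable.
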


\begin{proof}
One observes that $\gamma_0(Z_j)=Y^{1/2} Z_j Y^{1/2} + X$ for $j=1,2$ and so the hypothesis gives us $\norm{Y^{1/2} W Y^{1/2}} \le a$; where we have put $W=Z_1-Z_2$.

Recall that $\norm{A}^2 = \tr(A \tp{\overline{A}})$. Let $P$ be an orthogonal matrix diagonalizing $Y$: $PY\tp{P} = D$ (say) with eigenvalues $d_1,\cdots, d_n$. Then
\begin{align}
\norm{ Y^{1/2} W Y^{1/2} }^2 = \tr (YWY \overline{W}) = \tr(D W_P D \overline{W}_P),
\dn\end{align}
where we have put $W_P=\tp{P}WP$. Then it follows that
\begin{equation}
\sum_i d_i^2 s_i \le a^2, \text{ where } s_i = \sum_{j=1}^n |W_{i,j}|^2
\dn\end{equation}
Now it is easy to see that $\norm{W}^2=\norm{W_P}^2 = \sum_i s_i \le a^2(\sum_i d_i^{-2}) = a^2 \, \tr(Y^{-1})^2$ from which the lemma follows.
\end{proof}

Our next aim is to show that for small enough $\delta_0$, the condition $\norm{\gamma(Z)-Z} \le \delta_0$ ($\gamma \in \Sp{\R}$) implies that $\gamma$ is `close' to the maximal compact subgroup $ K_Z = \gamma_0 K \gamma_0^{-1}$ fixing $Z$, in a suitable sense. For this we recall the Iwasawa decomposition of $\Sp[n] \R$: for $g \in \Sp[n] \R$, 
\begin{align}
g=\mc N \mc A \mc K; \q \mc N= \begin{pmatrix}
B & C \\ 0 & {B^{t}}^{-1}
\end{pmatrix}, \mc A= \begin{pmatrix}
L & 0 \\ 0 & L^{-1}
\end{pmatrix}, \mc K=\begin{pmatrix}
P & Q \\ - Q & P
\end{pmatrix}; 
\dn\end{align}
where $B$ is unipotent and upper triangular, $L= \dia (e^{t_1}, \cdots, e^{t_n})$, and $\mc K \in K=\SO[2n]{\R}\cap\Sp{\R}$.

Put $\tgg= \gamma_0^{-1} \tg \gamma_0$ so that $\norm{\gamma(Z)-Z} = \norm{\gamma_0 \tgg(i1_n)- \gamma_0(i1_n)} \le \delta_0$ implies, by \lemref{gamma0} that
\begin{align} \label{na}
\norm{ \tgg (i1_n)- i1_n} = \norm{\mc N \mc A (i1_n) - i1_n} \le \delta_0 \, \tr(Y^{-1}) =: \eta.
\end{align}
Here we have used the Iwasawa decomposition for $\tgg$, along with th fact that $\mc K$ fixes $i1_n$. We see that
\begin{align}
\eta^2 \ge \norm{C\tp{B} +i (BL^2\tp{B} -1_n) }^2 = \tr\left( (N+iM)(N-iM) \right) = \tr(N^2)+\tr(M^2);
\dn\end{align}
where we have put $N=C\tp{B}$ and $M = (m_{i,j})= BL^2\tp{B} -1_n$.

\textsl{From now on, we will assume for simplicity that $n=2$, but this part of the analysis, up to the end of this subsection, carries over to any $n$.}

Further put $G=(g_{i,j})=BL$, so that 
\[ M=G\tp{G} - 1_2=\smat{g^2_{1,1}+g^2_{1,2}-1}{g_{1,2}g_{2,2}}{g_{1,2}g_{2,2}}{g^2_{2,2}-1}.\] 
Then $\tr (M^2) \le \eta^2$ shows that $\sum_i \sum_j m_{i,j}^2 \le \eta^2$. In particular $m_{i,i}^2 \le \eta^2$.
Thus we have for all $1 \le i \le 2$,
\begin{align}
\abs{m_{i,i}}=|\summ_{j\ge i} g_{i,j}^2 -1 | \le \eta \implies 1-\eta \le \summ_{j\ge i} g_{i,j}^2  \le 1+\eta.
\dn\end{align}

Note that $G$ is upper triangular, and $g_{i,j}= b_{i,j}l_j$ for  $j \ge i$ and $0$ otherwise, where obviously $B=(b_{i,j})$ and $l_j=e^{t_j}$ are the entries of $L$. We have $b_{1,1}=b_{2,2}=1$. Then from $m_{2,2}= g^2_{2,2}-1= l^2_{2}-1$ we see that 
$1-\eta \le l^2_2 \le 1+\eta$. Moving on, next consider $m_{1,2} = g_{1,2}g_{2,2}$. From $|m_{1,2}| \le \eta$ 
we infer that $|g_{1,2}| \le \eta/(1-\eta)$. Since $g_{1,2}=b_{1,2}l_2$, we get $|b_{1,2}| \le  \eta /(1-\eta)^{3/2} \ll \eta$ for $\eta$ small enough. Finally from $|m_{1,1}| \le \eta$ we get $l_1=g_{1,1}$
satisfies $1-2\eta \le l^2_1 \le 1+2\eta$ if $\eta \le 2/3$.

Now $C=N{B^{t} }^{-1}$ and so $\norm{C} \le \norm{N} \norm{B^{-1}} \ll \eta$.
In our notation now note that
\begin{equation}
\mc N \mc A - I_4= \begin{pmatrix}
BL & CL^{-1} \\ 0 & {\tp{B}}^{-1} L^{-1}
\end{pmatrix} - I_4= \begin{pmatrix}
G -I_2  & CL^{-1} \\ 0 & {\tp{G}}^{-1} - I_2
\end{pmatrix}
\dn\end{equation}
which, by the above analysis implies that $\norm{\mc N \mc A-I_4} \ll \eta^{1/2}$
for $\eta$ small enough. This is easy to see since $G = \smat{l_1}{g_{1,2}}{0}{l_2}$ and with $\eta$ small enough, $l_1,l_2$ are close to $1$ whereas $g_{1,2}$ is close to $0$. Moreover, $C$ is also close to $0$. Thus $\mc N \mc A=I_4 +O(\eta^{1/2})$. Multiplying by $\mc K$ on both sides gives $\tgg = \mc K + O(\eta^{1/2})$.
Conjugating with $\gamma_0$ gives (recall $K_Z= \gamma_0 K \gamma_0^{-1}$ and \eqref{na})
\begin{align} \label{etaconf}
\norm{\tg - K_Z} \ll \eta^{1/2} = (c \, \rho \, \tr(Y) \tr(Y^{-1}) )^{1/2} \q \q (\delta_0=: c \, \rho \tr(Y)).
\end{align}
From \eqref{hndiv2} and the estimate \eqref{hdecay} we can write 
\begin{align} \label{hnest}
2 \ank[2]^{-1} \mbb B^m(Z,Z) \ll (1+\rho^2)^{-k+k_0}\sumn_{\gamma \in S(m)} |h_\gamma(Z)|^{k_0}  + \#\mathscr S_m(Z, u \rho^{1/2}),
\end{align}
where $\mathscr S_m(Z, \delta) :=\{\gamma \in S(m)\mid \norm{\gamma-K_Z}\le \delta\}$ and $u=\max_{Z\in\Omega}(c \tr(Y) \tr(Y^{-1}) )^{1/2} $, which depends only on $\Omega$.

\subsection{Back to the proof} 
To bound the quantity $\sum_{\gamma \in S(m)} |h_\gamma(Z)|^{k_0}$ we use the method in \cite{das2015supnorms}. Unfolding, we can write
\begin{align}
\summ_{\gamma \in S(m)} |h_\gamma(Z)|^{k_0} \le \summ_j (\summ_{\gamma \in \Gamma_2} |h_{\gamma M_j}(Z)|^{k_0}).
\dn\end{align}

Put $h_\gamma(Z,W)^{k_0} = \det \left( \frac{Z-\overline{W}}{2i} \right) \lvert^{(W)}_{k_0} \gamma$.
From \cite[p.~79,81]{klingen1990siegel} (Godement's theorem) we know that (with $V=\Im(W)$) for $Z,W \in \Omega$,
\begin{align}
\summ_{\gamma \in S(m)} |h_\gamma(Z,W)|^{k_0}
  &= \summ_j \summ_{\gamma \in \Gamma_2} \left.\abs{h_\gamma(Z,W)}^{k_0}\right|_{k_0}^{(W)} M_j
  \ll_\Omega \summ_j \abs{\det(V)^{-k_0/2} |_{k_0}M_j } \n \\
& \ll \# \{ \Gamma_2 \backslash S(m) \} , \label{hzw2}
\end{align}
since $\det(V)^{-k_0/2}$ is invariant under the $|_{k_0}$ action of $\sptwo$ in absolute value.

The following lemma perhaps is known, but since we could not find a proof in the literature, so we include it here for convenience. 
\begin{lem} \label{coset-count}
With the above notation,
$\# \{ \Gamma_2 \backslash S(m) \} \le m^{3} \sigma_0(m)^2$.
\end{lem}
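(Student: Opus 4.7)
The plan is to reduce every coset in $\Gamma_2 \backslash S(m)$ to a canonical upper block-triangular form and count the resulting parameters. First I would invoke the integral-level Iwasawa decomposition: using the transitivity of $\Gamma_2$ on the set of admissible bottom rows $(C,D)$ with $C\tp{D} = D\tp{C}$, every $M \in S(m)$ is left-$\Gamma_2$-equivalent to one of the form $\smat{A}{B}{0}{D}$. The similitude relations then reduce to $\tp{A}D = m\,1_2$ and $A^{-1}B \in \Sym[2]{\Q}$. By further left multiplication by $\smat{U}{0}{0}{\tp{U}^{-1}} \in \Gamma_2$ with $U \in \GL[2]{\z}$, one puts $A$ into Hermite normal form $A = \smat{a_1}{c}{0}{a_2}$ with $a_1, a_2 \ge 1$ and $0 \le c < a_2$. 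The condition that $D = m\tp{A}^{-1}$ be integral forces $a_1 \mid m$ and $a_2 \mid m$, so at most $\sigma_0(m)^2$ admissible pairs $(a_1, a_2)$ occur.

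For each such $A$, the residual freedom in $B$ comes from the unipotent subgroup $\{\smat{1_2}{S}{0}{1_2} : S \in \Sym[2]{\z}\} \subset \Gamma_2$, which translates $B$ by $SD$. Hence the number of inequivalent $B$'s equals the lattice index $[L : L']$, with
\begin{equation*}
L := \{B \in \m[2]{\z} : A^{-1}B \in \Sym[2]{\Q}\}, \qquad L' := \{SD : S \in \Sym[2]{\z}\},
\end{equation*}
both being rank-$3$ lattices lying in the same $3$-dimensional subspace of $\m[2]{\R}$. Via the linear bijection $B \mapsto T = A^{-1}B$ onto $\Sym[2]{\R}$, and noting that the map $S \mapsto m A^{-1} S \tp{A}^{-1}$ on $\Sym[2]{\R} \cong \R^3$ has Jacobian $m^3 (\det A)^{-3}$, one obtains
\begin{equation*}
[L : L'] \;=\; \frac{m^3\,[\tilde L : \Sym[2]{\z}]}{(\det A)^3}, \qquad \tilde L := \{T \in \Sym[2]{\Q} : AT \in \m[2]{\z}\}.
\end{equation*}

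The main obstacle is the key arithmetic bound $[\tilde L : \Sym[2]{\z}] \le a_1 a_2 \,\gcd(a_1, a_2, c)$, which I would prove by writing out the four integrality constraints $a_1 T_{11} + c T_{12},\; a_1 T_{12} + c T_{22},\; a_2 T_{12},\; a_2 T_{22} \in \z$ and parametrizing $\tilde L$ directly, or equivalently by invoking the Smith normal form of $A$ (whose elementary divisors are $\gcd(a_1, a_2, c)$ and $a_1 a_2 / \gcd(a_1, a_2, c)$). Granting this, $[L : L'] \le m^3\,\gcd(a_1, a_2, c)/(a_1 a_2)^2$, and summing over all Hermite data yields
\begin{equation*}
\#\{\Gamma_2 \backslash S(m)\} \;\le\; m^3 \sum_{a_1 \mid m}\sum_{a_2 \mid m} \frac{1}{(a_1 a_2)^2}\sum_{c=0}^{a_2-1}\gcd(a_1, a_2, c).
\end{equation*}
Since the innermost sum is at most $a_2\,\gcd(a_1, a_2) \le a_1 a_2$, the right-hand side collapses to $m^3 \sum_{a_1, a_2 \mid m}(a_1 a_2)^{-1} \le m^3 \sigma_0(m)^2$, proving the claim. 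An alternative route giving the same estimate would be to exploit the multiplicativity of $m \mapsto \#\{\Gamma_2 \backslash S(m)\}$ in $m$, reducing to prime powers $m = p^r$, where the bound follows from an elementary-divisor enumeration of the double cosets in $\Gamma_2 \backslash S(p^r)/\Gamma_2$.
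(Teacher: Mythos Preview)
Your argument is correct and follows the same overall strategy as the paper --- reduce to upper block-triangular representatives and count parameters --- but the execution differs in a way worth noting. The paper simply quotes the explicit coset representatives from \cite{andrianov1974euler,kodama}, which already come with the range constraints $0\le b_{i,j}<c_{j,j}$ for $i\le j$; then $b_{2,1}$ is fixed by the symmetry relation $A\tp{B}=B\tp{A}$, and the count is a two-line computation yielding $\sum_{c_{1,1},c_{2,2}\mid m}(m/c_{2,2})\,c_{1,1}c_{2,2}^2 \le m^3\sigma_0(m)^2$. You instead rebuild the representative set from scratch via Hermite normal form on $A$ and then compute the number of admissible $B$'s as a genuine lattice index $[L:L']$, pushing through the Smith normal form of $A$ to reach the key inequality $[\tilde L:\Sym[2]{\z}]\le a_1a_2\gcd(a_1,a_2,c)$. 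Your count of $B$'s is in fact sharper term-by-term (by a factor $\gcd(a_1,a_2,c)/a_1\le 1$), though after the crude estimate $\sum_c\gcd(a_1,a_2,c)\le a_1a_2$ both routes collapse to the same $m^3\sigma_0(m)^2$. The paper's path is quicker because the hard structural work is absorbed into the citation; yours is self-contained and makes the lattice structure of the $B$-fibre transparent, which could be useful if one later wants to tighten the bound or extend to higher $n$.
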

\begin{proof}
From \cite{andrianov1974euler} or \cite{kodama}, we see that a set of right coset representatives of the set $\Gamma_2 \backslash S(m)$ can be chosen in the form $\psmb A& B \\ 0 & C \psme $ ($A,B,D \in \mtwo$) where
\begin{align}
& A\tp{C}=mI_2, A\tp{B}=B\tp{A}, \q A=(a_{i,j}),B=(b_{i,j}), C=(c_{i,j}), \n\\
& a_{2,1}=0, 0 \le a_{i,j} < a_{j,j} \text{ if } i<j; \q 0 \le b_{i,j} <c_{j,j} \text{ if } i \le j.
\dn\end{align}
From the equality $A\tp{D}=mI_2$ we get
\begin{align}
\begin{pmatrix}
a_{1,1}c_{1,1}+a_{1,2}c_{1,2} & a_{1,1}c_{2,1}+a_{1,2}c_{2,2} \\
a_{2,2}c_{1,2} & a_{2,2} c_{2,2} 
\end{pmatrix} = \begin{pmatrix}
m & 0 \\ 0 & m
\end{pmatrix},
\dn\end{align}
which shows that $a_{2,2}c_{1,2}=0=a_{1,1}c_{2,1}+a_{1,2}c_{2,2}$; $a_{2,2} c_{2,2}=m=a_{1,1}c_{1,1}+a_{1,2}c_{1,2}$. Clearly $c_{1,2}=0$ and so $a_{1,1}c_{1,1}=m$ as well.

From the second equality $A\tp{B}=B\tp{A}$ we get the relation $b_{2,1}a_{1,1}+b_{2,2}a_{1,2}=a_{2,2}b_{1,2}$. So $b_{2,1}$ is determined from the rest of the quantities.

The number of choices for $A$ is at most $m/c_{2,2}$ given $C$; similarly
the number of choices for $B$ is at most $c_{1,1}c_{2,2}^2$. Therefore, the total number of coset representatives is at most
\begin{align}
\summ_{c_{1,1}\mid m,\, c_{2,2} \mid m}\frac{m}{c_{2,2}} c_{1,1}c_{2,2}^2 \le m^3 \sigma_0(m)^2,
\dn\end{align}
since $c_{2,1}$ is determined from $a_{1,1}c_{2,1}+a_{1,2}c_{2,2}=0$.
\end{proof}

With the lemma in hand, now we can finally say from \eqref{hnest} and \eqref{hzw2} that
\begin{align} \label{bzz1}
 \ank[2]^{-1} \mbb B^m(Z,Z) \ll (1+\rho^2)^{-k+k_0} m^{3+\epsilon}  + \mathscr S_m(Z, u \rho^{1/2}).
\end{align}
Quoting from \cite[Prop.~5]{blomer2016supnorm} it follows that for $\epsilon>0$,
\begin{equation} \label{spzcount}
\mathscr S_m(Z, u \rho^{1/2})
\ll_{\Omega,\epsilon} m^{1+\epsilon}(1+\rho^{B_1} m^{B_2})
\end{equation}
for some $B_1,B_2>0$.

Finally, with \eqref{bzz1} thus we can put together all the ingredients 
and obtain
\begin{align}
 \ank[2]^{-1} \mbb B^m(Z,Z) \ll (1+\rho^2)^{-k+k_0} m^{3+\epsilon}  + m^{1+\epsilon}(1+\rho^{B_1} m^{B_2}),
\end{align}

From \eqref{ampl2} we now see that $\mbb A(Z,Z)$ is 
\begin{align}
 & \ll_{\epsilon, \Omega} \ank[2] \Big(\sum_{r=1,2,4} \sum_{p_1 \neq p_2} (p_1 p_2)^{3(1-r)/2} |\mbb B^{p_1^r p_2^r}(Z,Z)| + \sum_{0 \le s \le 4} \sum_{p \in \mc P} p^{3-2s}  |\mbb B^{p^{2s}}(Z,Z)| \n \Big)\\
& \ll \ank[2] \left( \sum_{r=1,2,4} \sum_{p_1 \neq p_2} (p_1 p_2)^{3(1-r)/2} 
 \big( (1+\rho^2)^{-k+k_0} (p_1p_2)^{3r+\epsilon}  + (p_1p_2)^{r+\epsilon}(1+\rho^{B_1} (p_1p_2)^{rB_2}) \big) \right. \n  \\
&   \q \q \q \q \q \left.
+ \sum_{0 \le s \le 4} \sum_{p \in \mc P} p^{3-2s}  \big(
(1+\rho^2)^{-k+k_0} p^{6s+\epsilon}  + p^{2s+\epsilon}(1+\rho^{B_1} p^{2sB_2}) \big) \right) \label{bzz2}
\end{align}

We now choose $\rho$ such that $\rho^{B_1} (2L)^{8B_2}=1$, and $L=L_0$ such that $(1+\rho^2)^{-k}$ decays exponentially:
\begin{equation}
2L_0: = \lfloor k^{B_1/(16B_2) - \epsilon} \rfloor; \q \rho = (2L_0)^{-8B_2/B_1}\asymp k^{-1/2 + \epsilon}
\dn\end{equation}
With these choices \eqref{bzz2} reads
\begin{align}
\ank[2]^{-1} \mbb A(Z,Z) \ll_{\epsilon, \Omega}   &  \sum_{r=1,2,4} \sum_{p_1 \neq p_2} (p_1 p_2)^{3(1-r)/2} \big( (p_1p_2)^{3r+\epsilon} e^{-k^\epsilon}+ (p_1p_2)^{r+\epsilon} \big) \n \\
& + \sum_{0 \le s \le 4} \sum_{p \in \mc P} p^{3-2s}  \big(  p^{6s+\epsilon} e^{-k^\epsilon} + p^{2s+\epsilon} \big)
\ll_{\epsilon} \frac{L_0^{3+\epsilon}}{\log(L_0)}. \label{ampl3}
\end{align}
Comparing \eqref{ampl1} and \eqref{ampl3} we find that (with $\ank[2] \ll k^3$)
\begin{align}
 \frac{L_0^5}{(\log L_0)^2} \det(Y)^k |F_0(Z)|^2 \ll \mbb A(Z,Z) \ll_{\epsilon, \Omega} k^3 \frac{L_0^{3+\epsilon}}{\log(L_0)}
\dn\end{align}
which finally gives us
\begin{equation}
\sup_{Z \in \Omega} \det(Y)^k |F_0(Z)|^2 \ll_{\epsilon, \Omega} k^3 \frac{\log(L_0)}{L_0^{2-\epsilon}} \ll k^{3- \omega+\epsilon}
\dn\end{equation}
for some absolute constant $\omega>0$ since $B_1, B_2 $ can be effectively computed. Summarising, we have proved

\begin{thm} \label{amplithm}
Let $F \in S^2_k$, $\norm{F}_2=1$ and $F$ be an eigenfunction of all Hecke operators. Assume further that $F$ is orthogonal to the space of Saito-Kurokawa lifts. Then
$\oldnorm{F}_\Omega \ll_{\Omega} k^{3/2-\eta}$ for some absolute constant $\eta>0$.
\end{thm}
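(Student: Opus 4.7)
The plan is to use the amplification method in the spirit of \cite{iwaniec1995supnorms, blomer2016supnorm}. Fix the Hecke eigenform $F_0$ and embed it in an orthonormal Hecke basis $\mc B^*_k$ of $S^2_k$. The crucial input is the Hecke eigenvalue inequality \eqref{hreln1}: the three quantities $|\lambda(p,F_0)|$, $p^{-3/2}|\lambda(p^2,F_0)|$ and $p^{-9/2}|\lambda(p^4,F_0)|$ cannot simultaneously be small. Taking $\mc P$ to be the primes in $[L,2L]$ and $x(n):=\mrm{sgn}(\lambda(n,F_0))$, the non-negative amplifier
\[ A_F := \sum_{r \in \{1,2,4\}} \Big( \sum_{p \in \mc P} x(p^r)\, p^{-3(r-1)/2}\, \lambda(p^r, F) \Big)^2 \]
satisfies $A_{F_0} \gg L^5/(\log L)^2$ by the prime number theorem, so dropping all but the $F_0$-term in $\mbb A(Z,Z):=\sum_F A_F \det(Y)^k |F(Z)|^2$ produces the lower bound $\mbb A(Z,Z) \gg L^5 (\log L)^{-2} \det(Y)^k |F_0(Z)|^2$.

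For the upper bound, I would expand $A_F$ into Hecke cross-terms, use the Hecke multiplicativity/Andrianov relations to combine them into single operators $T_m$ with $m \in \{p_1^r p_2^r, p^{2s}\}$, and recognise the spectral sum $\sum_F F(Z)\overline{(F|T_m)(W)}$ as a Hecke-twisted Bergman kernel $B^m(Z,W) = 2^{-1}\ank[2] \sum_{g\in S(m)} \det\bigl(\tfrac{Z-\overline W}{2i}\bigr)^{-k}\bigl|^{(W)}_k g$. To bound each $\mbb B^m(Z,Z):=\det(Y)^k B^m(Z,Z)$, I would split $S(m)$ according to whether $\tg(Z):=(m^{-1/4}\gamma)(Z)$ is far from or close to $Z$, governed by a decay parameter $\rho$. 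For the far part, the pointwise bound $|h_\gamma(Z)|\le(1+\rho^2)^{-1/2}$ from \cite[Lemma~1]{cogdell2011bergman} combined with Godement's uniform convergence of $\sum_\gamma |h_\gamma|^{k_0}$ on $\Omega$, together with the coset count $\#(\Gamma_2\backslash S(m))\ll m^{3+\epsilon}$ from \lemref{coset-count}, yields a contribution $\ll (1+\rho^2)^{-k+k_0}\, m^{3+\epsilon}$. For the near part I would first show, via a careful Iwasawa decomposition analysis of $\gamma_0^{-1}\tg\gamma_0$ (as in subsection~\ref{bkzsec}), that $\norm{\tg(Z)-Z}\le\delta_0$ forces $\tg$ to lie within distance $O(\delta_0^{1/2})$ of the compact stabiliser $K_Z$, and then invoke \cite[Prop.~5]{blomer2016supnorm} to obtain $\#\mathscr S_m(Z,u\rho^{1/2})\ll m^{1+\epsilon}(1+\rho^{B_1}m^{B_2})$ for explicit constants $B_1,B_2>0$.

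The main obstacle is the uniform control of the near-stabiliser count for $Z\in\Omega$: the Blomer–Pohl style matrix-counting is responsible for the loss that ultimately limits the size of the power saving. Combining both bounds and summing over the primes in $\mc P$ and the exponents $r,s$ gives, after some careful book-keeping, $\ank[2]^{-1}\mbb A(Z,Z)\ll L^{3+\epsilon}/\log L$ once I choose $\rho\asymp k^{-1/2+\epsilon}$ (so that $(1+\rho^2)^{-k}$ decays faster than any power of $k$) and $L_0\asymp k^{B_1/(16B_2)-\epsilon}$ (so that $\rho^{B_1}(2L_0)^{8B_2}\asymp 1$). Using $\ank[2]\ll k^3$ and comparing with the amplifier lower bound yields
\[ \det(Y)^k |F_0(Z)|^2 \ll_\Omega k^{3}\,\frac{\log L_0}{L_0^{2-\epsilon}} \ll k^{3-\omega+\epsilon} \]
for an effectively computable $\omega>0$, whence $\oldnorm{F_0}_\Omega \ll_\Omega k^{3/2-\eta}$ for some absolute $\eta>0$, as claimed.
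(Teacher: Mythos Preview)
Your proposal is correct and follows essentially the same approach as the paper's own argument in Section~\ref{ampl}: the same amplifier built from \eqref{hreln1}, the same expansion into Hecke-twisted Bergman kernels $B^m$, the same far/near decomposition via $|h_\gamma|\le(1+\rho^2)^{-1/2}$ together with \lemref{coset-count} and the Iwasawa analysis of subsection~\ref{bkzsec} leading to \cite[Prop.~5]{blomer2016supnorm}, and the same choices of $\rho$ and $L_0$ to balance the two pieces.
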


\begin{rmk}
It is of course possible to deduce a polynomial bound for $\bkzz$ by estimating the set $\mbb S_1(Z, \delta_0)$ in the non-compact setting. We plan to come back to this point in the future, motivated by section~\ref{n=1exam}.
\end{rmk}

\bibliographystyle{amsalpha}


\raggedbottom

\end{document}